\newtheorem{thm}{Theorem}[section]
\newtheorem{cor}[thm]{Corollary}
\newtheorem{lem}[thm]{Lemma}
\newtheorem{prop}[thm]{Proposition}
\newtheorem{claim}[thm]{Claim}
\theoremstyle{definition}
\newtheorem{obs}[thm]{Observation}
\newtheorem{quest}[thm]{Question}
\newtheorem{conj}[thm]{Conjecture}
\newcommand{\mcalP}{\mathcal{P}}
\newcommand{\mcalQ}{\mathcal{Q}}
\newcommand{\mcalG}{\mathcal{G}}
\newcommand{\braces}[1]{\left\lbrace #1 \right\rbrace}
\newcommand{\entr}[1]{\mathsf{in}_{#1}}
\crefname{section}{section}{sections}
\crefname{subsection}{subsection}{subsections}
\crefname{lem}{lemma}{lemmata}
\crefname{rem}{remark}{remarks}
\crefname{thm}{theorem}{theorems}
\crefname{cor}{corollary}{corollaries}
\crefname{figure}{figure}{figures}
\crefname{claim}{claim}{claims}
\crefname{prop}{proposition}{propositions}
\crefname{obs}{observation}{observations}
\crefname{quest}{question}{questions}
\title{Large vertex-flames in uncountable digraphs}
\author{Florian Gut}
\address{Florian Gut,
University of Hamburg, Department of Mathematics, Bundesstra{\ss}e 55 (Geomatikum), 
20146 Hamburg, Germany}
\email{florian.gut@uni-hamburg.de}
\author{Attila Jo\'{o}}
\thanks{The second author would like to thank the generous support of the Alexander 
von Humboldt Foundation and NKFIH 
OTKA-129211}
\address{Attila Jo\'{o},
University of Hamburg, Department of Mathematics, Bundesstra{\ss}e 55 (Geomatikum), 20146 Hamburg, Germany}
\email{attila.joo@uni-hamburg.de}
\address{Attila Jo\'{o},
Alfr\'{e}d R\'{e}nyi Institute of Mathematics, Set theory and general topology research division, 13-15 Re\'{a}ltanoda St., 
Budapest, Hungary}
\email{jooattila@renyi.hu}
\keywords{vertex-flame, connectivity, infinite Menger's theorem}
\subjclass[2020]{Primary: 05C63, 05C20, 05C40 Secondary: 05C38 } 
\begin{document}

\begin{abstract}
    The study of minimal subgraphs witnessing a connectivity property is an important field in graph theory.
    The foundation for large flames has been laid by Lov\'asz:
    Let $ D=(V,E) $ be a finite digraph and let $ r\in V $.
    The local connectivity $ \kappa_D(r,v) $ from $ r $ to $ v $ is defined to be the maximal number of internally disjoint $ r\rightarrow v $ paths in $ D $.
    A spanning subdigraph $ L $ of $ D $ with $ \kappa_L(r,v)=\kappa_D(r,v) $ for every $ v\in V-r $ must have at least $ \sum_{v\in V-r}\kappa_D(r,v) $ edges. 
    Lovász proved that, maybe surprisingly, this lower bound is sharp for every finite digraph.
    
    The optimality of an $ L $ satisfying the min-max criteria from Lov\'asz' theorem may instead also be captured by the following structural characterisation:
    For every $ v\in V-r $ there is a system $ \mathcal{P}_v $ of internally disjoint $ r\rightarrow v $ paths in $ L $ covering all the incoming edges of $ v $ in $ L $ such that one can choose from each $ P\in \mathcal{P}_v $ either an edge or an internal vertex in such a way that the resulting set meets every $ r\rightarrow v $ path of $ D $.
    The positive result for countably infinite digraphs based on this structural infinite generalisation were obtained by the second author. 
    
    In this paper we extend this to digraphs of size $ \aleph_1 $ which requires significantly more complex techniques.
    Despite solving yet the smallest uncountable case, the complete understanding of the concept and potentially a proof for arbitrary cardinality still seems to be far.
\end{abstract}

\maketitle

\section{Introduction}

The starting point of our investigation is the following theorem of Lovász.

\begin{thm}[Lovász, consequence of \cite{lovasz1973connectivity}*{Theorem 2} ]\label{t: LovFlame}
    Let $ D $ be a digraph with $ r\in V(D) $.
    Then there is a spanning subdigraph $ L $ of $ D $ in which for every $ v\in V(D)-r $ the following three quantities are equal:
    the local connectivities $ \kappa_D(r,v)$ and $ \kappa_L(r,v) $, and the in-degree of $ v $ in $ L $.
\end{thm}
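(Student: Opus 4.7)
The plan is to prove Theorem~\ref{t: LovFlame} by induction on $|E(D)|$ — appropriate since Lov\'{a}sz's original result is for finite $D$, and this edge-by-edge deletion mechanism is exactly what the paper will later need to replace in the uncountable setting. The first routine observation is that for any spanning subdigraph $L$ and any $v \neq r$, internally disjoint $r \to v$ paths in $L$ occupy pairwise distinct edges of $\entr{v}$, so $\kappa_L(r,v) \leq d^-_L(v)$; combined with monotonicity $\kappa_L(r,v) \leq \kappa_D(r,v)$, the theorem amounts to tightening both inequalities simultaneously at every $v$.

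If $d^-_D(v) = \kappa_D(r,v)$ for every $v \neq r$, then $L := D$ already works, which handles the terminal case. Otherwise I would pick $v$ with $d^-_D(v) > \kappa_D(r,v)$ and try to locate an in-edge $e = (u,v)$ whose deletion preserves $\kappa_D(r,w)$ for every $w \neq r$. Given such an $e$, the inductive hypothesis applied to $D - e$ produces a spanning $L \subseteq D - e$ with $\kappa_L(r,w) = \kappa_{D-e}(r,w) = d^-_L(w)$, and the preservation property lifts this to $\kappa_L(r,w) = \kappa_D(r,w)$, so $L$ is the desired subdigraph for $D$ too. Each step strictly decreases $|E(D)|$, so the induction terminates.

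The bulk of the effort — and the genuine obstacle — is the following \emph{non-essential in-edge lemma}: whenever $k := \kappa_D(r,v) < d^-_D(v)$, there exists $e \in \entr{v}$ with $\kappa_{D-e}(r,w) = \kappa_D(r,w)$ for every $w$. My plan for this lemma is to pass to the standard vertex-split auxiliary digraph $D^\ast$ (each $x \neq r$ replaced by $x^-, x^+$ joined by a unit-capacity edge), so that internally disjoint $r \to v$ paths in $D$ correspond to edge-disjoint unit flows in $D^\ast$. Fix a maximum $r \to v^-$ flow $f$; it saturates exactly $k$ of the in-edges of $v$, leaving at least one unsaturated candidate $e = (u,v)$. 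For any $w \neq v$, a maximum $r \to w^-$ flow $f_w$ sends at most one unit through $v$; if that unit happens to enter $v$ via $e$, a symmetric-difference/augmenting-path swap between $f$ and $f_w$ — exploiting that $f$ saturates other in-edges of $v$ not used by $f_w$ — reroutes the offending unit and produces an $r \to w^-$ flow of the same value avoiding $e$, witnessing $\kappa_{D-e}(r,w) = \kappa_D(r,w)$.

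Overall the architecture is clean once the non-essential-edge lemma is in hand; the delicate point is precisely the flow-swap construction, since one has to verify that the augmenting path does not accidentally consume capacity at in-edges of $v$ already carrying $f_w$-units, and a careful case analysis on whether $f_w$ uses $v$ at all is required. The reader should note that this entire inductive scheme is exactly what the main theorem of the paper must circumvent: for digraphs of size $\aleph_1$ one cannot peel off edges one at a time, and the construction developed in the sequel replaces the inductive deletion by a transfinite path-system surgery.
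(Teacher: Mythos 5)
The paper does not prove this statement---it is quoted from Lov\'asz with a citation---so I am judging your argument on its own merits. Your reduction is the standard one: the theorem follows by repeatedly deleting a ``non-essential'' in-edge at any vertex $v$ with $d^-_D(v)>\kappa_D(r,v)$, and this induction is fine. But be aware that your \emph{non-essential in-edge lemma} is not an auxiliary step: it \emph{is} Lov\'asz's Theorem~2, i.e.\ the entire content of the cited result (indeed you claim something slightly stronger, namely that \emph{every} in-edge of $v$ unsaturated by a fixed maximum path system is deletable --- this stronger form is also true; it is the finite specialisation of Lemma~\ref{l:  no collapse of smallest EMsep}, using that in a finite digraph every maximum internally disjoint $r\rightarrow v$ path system is orthogonal to every minimum separator, in particular to $S_{D,v}$). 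So everything hinges on your proof of that lemma, and that is where the gap is.

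The ``symmetric-difference/augmenting-path swap between $f$ and $f_w$'' is not a well-defined operation: $f$ and $f_w$ are flows to \emph{different} sinks $v^-$ and $w^-$, so their difference decomposes into paths and cycles in a mixed residual digraph and does not yield an augmenting structure for $f_w$ relative to the single edge $e$. More importantly, the obstruction you must overcome is not located at $v$ but at $w$: if the unique $f_w$-path through $v$ enters via $e=uv$, you must replace its approach to $v$ by an $r\rightarrow v$ path ending in a \emph{different} in-edge of $v$ that is internally disjoint from the other $\kappa_D(r,w)-1$ paths of $f_w$ and from the segment from $v$ to $w$; the fact that $f$ saturates other in-edges of $v$ gives no control over those constraints, since the paths of $f$ may meet the paths of $f_w$ arbitrarily. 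A correct argument (Lov\'asz's original one, or the proof of Lemma~\ref{l:  no collapse of smallest EMsep} above) proceeds quite differently: one takes a minimum $r$--$w$ separator $S$ in $D-e$ of size $\kappa_D(r,w)-1$, observes that $S+v$ is a minimum separator in $D$, and then runs an augmenting-walk analysis at $v$ against the bubble/separator structure to either reroute or derive a contradiction. As written, your third paragraph defers exactly the part of the theorem that is hard, so the proposal does not constitute a proof; either cite Lov\'asz's Theorem~2 for the lemma (which is what the paper does) or supply the separator-based argument in full.
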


Let us call a spanning subdigraph $ L $ of a finite `$ r $-rooted' 
digraph $D=(V,E)$ large (w.r.t.~$ D $) if $ L $ preserves all the local connectivities from the root, i.e.~$ \kappa_D(r, v) = \kappa_L(r, v) $ for every $ v\in V-r $.
Furthermore, a finite $ r $-rooted digraph $D=(V,E)$ is defined to be a vertex-flame if $ \kappa_D(r, v) = \left\vert \mathsf{in}_D(v)\right\vert $ for every $ v\in V-r $, where $ \mathsf{in}_D(v) $ is the set of incoming edges of $ v $. 
Using this terminology, Lovász' theorem says that every finite rooted digraph admits a large vertex-flame.
It was shown by Calvillo Vives in \cite{flameVives}, that in every finite $ r $-rooted digraph $ D $ every vertex-flame subgraph (with respect to root $ r $) be extended to a large vertex-flame of $ D $.
This was further generalised by the second author by proving that the edge sets of the vertex-flame subdigraphs of a finite rooted digraph $ D=(V,E) $ are the feasible sets of a greedoid on $ E $ whose bases are exactly the large vertex-flames in Lovász' theorem (see \cite{jooGreedoidFlame2021}*{Theorem 1.2}).

    There are many results in infinite graph theory that were first proved only for finite graphs and a deeper understanding of the underlying concept and more complex arguments were necessary to generalise them to infinite ones.
    Sometimes even the appropriate formulation of the problem for infinite graphs is already non-trivial because the equivalent forms in the finite case could be no more equivalent in general.
    For example it is well-known and easy to prove that the edge set of a finite graph can be partitioned into cycles if and only if there is no vertex with odd degree.
    The condition can be rephrased as the non-existence of odd cuts.
    A deep theorem of Nash-Williams \cite{nash1960decomposition}* {p.~235 Theorem 3} says that the reformulated condition is actually sufficient to partition the edges of a graph of any size into cycles, whereas the original condition is insufficient which is for example witnessed by a two-way infinite path.
    Other classical results fail at some cardinalities;
    for example every countable graph admits a normal spanning tree but an uncountable complete graph does not.

    For one of the most influential theorems in infinite graph theory the necessity of choosing the ``right'' formulation was also true.
    The result in question is the generalisation of Menger's theorem for arbitrary graphs which will play an important role in the main result of the paper.
    Erdős observed that the maximal size of a system of pairwise disjoint paths in a graph between two prescribed vertex sets and the minimal size of a vertex set meeting all the paths between these two sets is the same regardless of the size of the graph.
    He realised that considering this min-max formulation of Menger's theorem does not lead to a really strong infinite generalisation.
    Indeed, choosing the path-system inclusion-wise maximal and taking all the vertices of these paths as a separator is suitable whenever the path-system in question is infinite, although this separator is clearly way too ``big'' in a structural sense.
    Erdős conjectured the following structural infinite generalisation of Menger's theorem (it was known as the Erdős-Menger conjecture) which was eventually proved after several partial results by Aharoni and Berger:
\begin{thm}[Aharoni \& Berger \cite{aharoni2009menger}*{Theorem 1.6}]\label{t: inf Menger}
    For every digraph $ D $ and $ X,Y\subseteq V(D) $, there is a system $ \mathcal{P} $ of pairwise disjoint $ X \rightarrow Y $ paths in $ D $ such that one can choose exactly one vertex from each path in $ \mathcal{P} $ in such a way that the resulting vertex set $ S $ separates $ Y $ from $ X $ in $ D $. 
\end{thm}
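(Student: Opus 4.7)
The plan follows the \emph{wave method}, developed by Aharoni for the countable case and extended by Aharoni and Berger in general: reduce the theorem to a linkability assertion in a residual digraph, and attack that assertion by transfinite alternating-path arguments. After standard reductions (we may assume $X\cap Y=\emptyset$, that each vertex of $X$ is a source and each vertex of $Y$ a sink, and that there are no direct edges from $X$ to $Y$, splitting vertices if necessary), define a \emph{wave} as a system $\mathcal{W}$ of pairwise disjoint paths starting in $X$ whose set $T(\mathcal{W})$ of terminal vertices separates $Y$ from $X$ in $D$. Waves are partially ordered by extension, with $\mathcal{W}_1\le \mathcal{W}_2$ when every path of $\mathcal{W}_1$ is an initial segment of some path of $\mathcal{W}_2$. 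A Zorn's lemma argument, using that chains of waves can be combined by taking path-wise unions, yields a maximal wave $\mathcal{W}^*$.

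With $\mathcal{W}^*$ fixed, form the residual digraph $D^*$ whose vertices are $T(\mathcal{W}^*)$ together with all vertices of $D$ strictly beyond $T(\mathcal{W}^*)$ and whose edges are those of $D$ between such vertices. If one can produce a \emph{linkage} in $D^*$ -- a system of pairwise disjoint $T(\mathcal{W}^*)\to Y$ paths covering every vertex of $T(\mathcal{W}^*)$ -- then concatenating this linkage with the paths of $\mathcal{W}^*$ yields the desired disjoint $X\to Y$ path system $\mathcal{P}$, and $T(\mathcal{W}^*)$ serves as the transversal separator: it meets each path of $\mathcal{P}$ at exactly one vertex, and it cuts every $X\to Y$ path of $D$ by the wave property of $\mathcal{W}^*$.

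The main obstacle is proving linkability of the triple $(D^*, T(\mathcal{W}^*), Y)$. I would introduce the notion of a \emph{hindrance} -- a wave witnessing that not all sources can be simultaneously linked -- and establish two claims: first, that maximality of $\mathcal{W}^*$ forces $(D^*, T(\mathcal{W}^*), Y)$ to be hindrance-free; and second, that every hindrance-free triple admits a linkage. The second claim is the truly hard part: it demands an elaborate transfinite construction built around alternating paths between a partial linkage and as-yet-unused augmenting structures, carefully orchestrated along a well-ordering so that commitments cohere at limit stages. This is a substantial lift of the finite augmenting-path argument of Menger and K\"{o}nig to arbitrary cardinality, and it is where essentially all of the difficulty of the theorem resides; the concatenation step of the previous paragraph is then a routine verification.
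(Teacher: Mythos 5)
This theorem is not proved in the paper at all: it is imported as a black box from Aharoni and Berger's work, so there is no internal proof to compare your plan against. Measured against the actual proof in the cited reference, your outline correctly reproduces its architecture: the reduction to a bipartite-like ``web,'' the notion of a wave, the existence of a maximal wave via a limit lemma for chains of waves, the passage to the residual digraph, the fact that maximality of the wave makes the residual triple hindrance-free, and the reduction of the theorem to the statement that every hindrance-free triple is linkable. The concatenation and orthogonality verification at the end is indeed routine.

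However, as a proof this has a genuine gap, and it is the gap: the claim that every hindrance-free triple admits a linkage \emph{is} the Aharoni--Berger theorem in its working form, and everything you wrote before it is the comparatively easy reduction. You name the key step but give no argument for it beyond gesturing at ``transfinite alternating-path arguments orchestrated along a well-ordering.'' The real proof is an induction on the cardinality of the digraph involving, among other things, a bipartite conversion, the machinery of $Y$-joined and safely linkable sets, and separate treatment of singular and regular cardinals; none of this is recoverable from your sketch, and the limit-stage coherence you allude to is precisely where naive alternating-path schemes break down (this is why the conjecture stood open for decades). So your proposal should be read as a correct statement of the proof strategy of the cited source, not as a proof: the essential content of the theorem remains unestablished in it.
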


As in the case of the Erdős-Menger conjecture, quantities are not appropriate to obtain the right infinite generalisation of \cref{t: LovFlame}, thus we need to look at the structural properties of $ L $.
We extend the definition of vertex-flames for rooted digraphs of any size by demanding for every $ v\neq r $ the existence of internally disjoint $ r\rightarrow v $ paths covering \emph{all} the incoming edges of $ v $ instead of just the equality of the in-degree of $ v $ and the local connectivity from $ r $ to $ v $.\footnote{One can define edge-flames by considering edge-disjoint paths.}
The condition $ \kappa_D(v)=\kappa_L(v) $ translates to the existence of a maximal-sized internally disjoint $ r\rightarrow v $ path-system $ \mathcal{P} $ of $ D $ that lives in $ L $.
We strengthen this by asking $ \mathcal{P} $ to be ``big'' not just cardinality-wise but in a structural Erdős-Menger way.
Namely, we demand the existence of a separation of $ v $ from $ r $ in $ D $ that can be obtained by choosing exactly one edge or one internal vertex from each path in $ \mathcal{P} $.
(The existence of such a $ \mathcal{P} $ is equivalent with \cref{t: inf Menger}).
A spanning subdigraph is called large if there is such a $ \mathcal{P} $ for every $ v\neq r $.
We will see that in a large vertex-flame for each $ v $ the path-system witnessing largeness and the path-system covering the incoming edges of $ v $ can be chosen to be the same (see the promised $ \mathcal{P}_v $ in the abstract).

The existence of large vertex-flames in the sense above in countable rooted digraphs was shown by the second author \cite{joo2019vertex}*{Theorem 1.2}.
He proved later with Erde and Gollin the strengthened of this result stating that every vertex-flames of a countable rooted digraph can be extended into a large one \cite{erde2020enlarging}*{Theorem 1.3}.
The main result of the paper is leaving countable digraphs behind and handle the smallest uncountable case:
\begin{thm}\label{t: main result}
    Every rooted digraph of size at most $ \aleph_1 $ admits a large vertex-flame.
\end{thm}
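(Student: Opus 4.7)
The plan is a transfinite recursion of length $\omega_1$ along a continuous increasing chain $(M_\alpha)_{\alpha<\omega_1}$ of \emph{countable} elementary submodels of some $H(\theta)$ with $D,r\in M_0$ and $V(D)\subseteq\bigcup_{\alpha<\omega_1}M_\alpha$. Setting $V_\alpha := V(D)\cap M_\alpha$, the sets $V_\alpha$ form a continuous filtration of $V(D)$ by countable pieces. The aim is to build an increasing chain $(L_\alpha)_{\alpha<\omega_1}$ of spanning subdigraphs with $L_\alpha \subseteq D[V_\alpha]$ whose union $L := \bigcup_{\alpha<\omega_1} L_\alpha$ is the desired large vertex-flame of $D$.

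The crucial question is what inductive invariant to maintain. Asking merely that $L_\alpha$ be a large vertex-flame of $D[V_\alpha]$ is too weak, since new paths in $D$ above $V_\alpha$ can bypass the chosen selectors at later stages; asking that it be a large vertex-flame of all of $D$ is too strong to preserve under extension. I would instead require that $L_\alpha$ is a vertex-flame, so every $v\in V_\alpha-r$ has its ingoing edges in $L_\alpha$ covered by an internally disjoint system $\mathcal{P}_v$ of $r\to v$ paths in $L_\alpha$, together with a fixed selector $S_v$ of $\mathcal{P}_v$ (one edge or internal vertex per path) that separates $r$ from $v$ in the \emph{full} digraph $D$. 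Elementarity is the key here: by reflecting Theorem \ref{t: inf Menger} for the pair $(\{r\},\{v\})$ into $M_\alpha$, Menger-type witnesses can be chosen inside $M_\alpha$, and any separator of $r$ from $v$ in $D$ that lies inside $M_\alpha$ remains a separator after the subsequent, far-away enlargements $L_\beta$ with $\beta>\alpha$.

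At successor steps $\alpha\mapsto\alpha+1$ all the work happens inside the countable submodel $M_{\alpha+1}$. I would apply the countable extension theorem \cite{erde2020enlarging}*{Theorem 1.3} not to $D\cap M_{\alpha+1}$ itself but to a carefully designed auxiliary countable digraph $D^+_\alpha$, obtained from $D\cap M_{\alpha+1}$ by ``summarising'' the outside world $D\setminus M_{\alpha+1}$ along the currently fixed separators $S_v$, $v\in V_\alpha$; this contraction encodes in a countable object the data the theorem needs in order to respect the ambient $D$. Limit steps $\lambda<\omega_1$ are straightforward: set $L_\lambda := \bigcup_{\alpha<\lambda}L_\alpha$, observe that every finite $r\to v$ path is captured at some $\alpha<\lambda$ and that each $S_v$ continues to separate in the unchanged $D$.

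The main obstacle will be the successor step: one must design $D^+_\alpha$ so that a single application of the countable extension theorem simultaneously (i) covers the ingoing edges of every new $v\in V_{\alpha+1}\setminus V_\alpha$ by internally disjoint $r\to v$ paths in $L_{\alpha+1}$, (ii) keeps every previously fixed separator $S_v$ ($v\in V_\alpha$) valid in $D$ even though $L_{\alpha+1}$ has more edges than $L_\alpha$ that could in principle open shortcuts around $S_v$, and (iii) produces, for each new $v$, a separator $S_v$ residing inside $M_{\alpha+1}$. Balancing these three requirements is presumably where the ``significantly more complex techniques'' of the abstract enter; the strategy succeeds at $\aleph_1$ precisely because every $M_\alpha$ is countable, and pushing beyond $\aleph_1$ would demand an analogue of the countable extension theorem for uncountable pieces, which is not currently available.
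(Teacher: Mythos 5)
You have the right skeleton --- a length-$\omega_1$ recursion along a continuous chain of countable elementary submodels, with separators fixed early and protected from later stages --- but there is a genuine gap, and one of your guiding claims is false. You assert that for $v\in V_\alpha$ the ``Menger-type witnesses can be chosen inside $M_\alpha$'' and that the selector $S_v$ ``lies inside $M_\alpha$''. Elementarity only gives an Erd\H{o}s--Menger separation that is an \emph{element} of $M_\alpha$, not a \emph{subset} of it, and in general no countable separation exists: if $D$ is the complete digraph on $\aleph_1$ vertices rooted at $r$, then every $S\in\mathfrak{S}_D(v)$ and every orthogonal path-system has size $\aleph_1$. Hence the pair $(\mathcal{P}_v,S_v)$ witnessing largeness at $v$ cannot live in $D[V_\alpha]$, and your invariant ``$L_\alpha\subseteq D[V_\alpha]$ is a vertex-flame each of whose selectors separates in all of $D$'' is unattainable already at the first step. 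This is precisely why the paper fixes only the (possibly uncountable) set $S_v:=S_{L_{\omega\beta},v}$ and then assembles $\mathcal{P}_v$ \emph{layer by layer} across all $\omega_1$ levels, one countable segment per layer, maintaining as invariants that $S_v\setminus V_\alpha$ remains an Erd\H{o}s--Menger separation of the ``upper'' digraph $L^{\omega\alpha+n}$ and that this upper digraph keeps the vertex-flame property at all previously treated vertices (properties (\ref{item: main 7})/(a) and (\ref{item: main 7})/(b)). Nothing in your proposal plays the role of these invariants, yet without them there is no way to extend $\mathcal{P}_v$ orthogonally to the fixed $S_v$ when new in-edges of $v$ appear at later layers.

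Second, you explicitly defer the entire successor step (``one must design $D^+_\alpha$ so that\dots''), but that design \emph{is} the proof: the analogue in the paper is Claim \ref{cl: desired I}, whose verification occupies most of the argument and needs machinery your sketch does not mention --- the reduction to quasi-vertex-flames, the bubble calculus showing that restricting the in-edges of $v$ to $E^{+}(\mathcal{Q}_v)$ for $\mathcal{Q}_v\in\mathfrak{P}_{L}(v,S_{L,v})$ collapses no $S_{L,u}$, the preservation of linkedness of the largest separation $T_{D,v}$ (Lemma \ref{l: largest EMsep}), the Pym-type merging of path-systems, and a minimality argument over witnesses; the countable extension theorem of \cite{erde2020enlarging} that you invoke is not what is actually applied (only its Lemmas 2.10 and 4.10 are, after an elementary-submodel reduction as in Corollary \ref{cor: I G-qusi}). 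Note also that the paper runs the recursion in the opposite direction, \emph{deleting} in-edges so that $L_{\omega_1}=\bigcap_{\xi}L_\xi$, which is what makes ``every $L_\xi$ is $D$-large'' a checkable invariant via Lemma \ref{l: largest EMsep is real}. As it stands, your text is a plausible research plan rather than a proof.
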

As in the case of the infinite version of Menger's theorem, \cref{t: inf Menger}, the construction and the necessary arguments get significantly more complex when the digraph in question is uncountable.
Although several of our tools can be used to approach the problem for arbitrary large digraphs, our proof relies strongly on the fact that there is an enumeration of the vertex set in which the proper initial segments are countable.
We expect that \cref{t: main result} remains true without any size restriction on the digraph but we feel that, despite solving the smallest uncountable case, a complete understanding of the problem is still far.
\begin{conj}
    Every rooted digraph admits a large vertex-flame.
\end{conj}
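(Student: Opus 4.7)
The plan is to construct $L$ by a transfinite recursion of length $\omega_1$ along a continuous increasing chain of countable elementary submodels $M_\alpha \prec H(\theta)$ (for a sufficiently large regular $\theta$) with $D, r \in M_0$ and $\bigcup_{\alpha<\omega_1}(M_\alpha \cap V(D)) = V(D)$. At each stage $\alpha$ I will have built a subdigraph $L_\alpha$ of $D$, living essentially inside the countable approximation $D_\alpha := D[M_\alpha \cap V(D)]$, and for every vertex $v \in (M_\alpha \cap V(D)) - r$ a committed internally disjoint path-system $\mathcal{P}_v$ in $L_\alpha$ together with a Menger-type selection $S_v$ consisting of one edge or internal vertex from each $P \in \mathcal{P}_v$ that forms an $r \rightarrow v$ separator \emph{in $D$}.

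At successor stages I would exploit elementarity: the countable digraph $D_{\alpha+1}$ lies in $M_{\alpha+1}$, and by the strengthening of the countable case in \cite{erde2020enlarging}*{Theorem 1.3} the already-constructed vertex-flame $L_\alpha$ inside $D_{\alpha+1}$ extends to a large vertex-flame of $D_{\alpha+1}$. Performing this extension inside $M_{\alpha+1}$ and recording the resulting $(\mathcal{P}_v, S_v)$ for the freshly captured vertices yields $L_{\alpha+1}$; once committed, witnesses are never revised. At limit stages I take unions $L_\alpha := \bigcup_{\beta<\alpha} L_\beta$, and the putative large vertex-flame is $L := \bigcup_{\alpha<\omega_1} L_\alpha$.

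The main obstacle, and the reason the uncountable case is much harder than the countable one, is that largeness at $v$ is a \emph{global} statement about $D$: the committed separator $S_v$ must block every $r \rightarrow v$ path in the whole digraph, not merely every one in the countable piece $D_{\alpha+1}$ where it was chosen. New $r \rightarrow v$ walks in $D$ through vertices added at stages $\beta > \alpha$ could bypass $S_v$ and annihilate the commitment retroactively. The key technical step is to turn elementarity of $M_{\alpha+1}$ into \emph{absolute} preservation of the separator property: the existence of an $r \rightarrow v$ path in $D$ avoiding $S_v$ is witnessed by a finite sequence of vertices of $D$, and one must force such a witness to reflect into $M_{\alpha+1}$, contradicting that $S_v$ separates in $D_{\alpha+1}$. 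Arranging this likely requires the submodel chain to be $\in$-continuous and chosen so that the set of ``bad'' potential bypass-vertices for each committed $S_v$ is captured by the next submodel.

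A secondary issue is that $L$ itself must be a vertex-flame, that is, $\mathcal{P}_v$ must cover all of $\mathsf{in}_L(v)$ for every $v \neq r$. Since ingoing edges of $v$ may be added to $L$ at many different stages (whenever a new tail shows up), I would impose a monotonicity/coherence invariant on the $\mathcal{P}_v$ forcing newly appearing ingoing edges to extend existing paths of $\mathcal{P}_v$ rather than forming fresh ones; the Aharoni--Berger theorem \ref{t: inf Menger} applied inside each $M_{\alpha+1}$ would be the tool that simultaneously produces the covering path-system and its Menger-style separator. Assembling these ingredients — the reflecting submodel chain, the countable extension theorem, the absoluteness of separators, and the coherence of the path-system commitments — should yield Theorem \ref{t: main result}.
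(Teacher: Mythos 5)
You are proving the conjecture for rooted digraphs of \emph{arbitrary} cardinality, and this is precisely what the paper does not do: the conjecture is left open, and the authors stress that their argument ``relies strongly on the fact that there is an enumeration of the vertex set in which the proper initial segments are countable.'' Your construction inherits exactly this limitation. A continuous chain $\langle M_\alpha : \alpha<\omega_1\rangle$ of \emph{countable} elementary submodels with $\bigcup_{\alpha<\omega_1}(M_\alpha\cap V(D))=V(D)$ exists only when $\left|V(D)\right|\leq\aleph_1$, and your successor step invokes the countable extension theorem \cite{erde2020enlarging}*{Theorem 1.3} inside each $D_{\alpha+1}$, which again requires the pieces to be countable. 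For a digraph of size $\aleph_2$ you would need submodels of size $\aleph_1$ and, as an induction hypothesis, an \emph{extension} theorem at $\aleph_1$ (every vertex-flame of an $\aleph_1$-sized rooted digraph extends to a large one); this is not available, since Theorem \ref{t: main result} gives existence but not extendability. So the proposal is at best a sketch of the known case $\left|V(D)\right|\leq\aleph_1$, not of the conjecture.

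Even restricted to $\aleph_1$, the sketch stops short of the points where the actual work happens. The paper does not build $L$ as an increasing union of large vertex-flames of the $D_\alpha$; it runs a \emph{decreasing} recursion $L_0=D\supseteq L_1\supseteq\cdots$ that deletes ingoing edges, and the commitment that survives all later stages is the \emph{smallest} Erd\H{o}s--Menger separation $S_v=S_{L_{\omega\beta},v}$, whose invariance under subsequent deletions is a genuine lemma (Lemma \ref{l:  no collapse of smallest EMsep}) proved with augmenting walks and the bubble-uniting lemma, not by reflection. Your absoluteness observation for separators is correct but is the easy half; the hard half is guaranteeing that after $\omega_1$ many modifications each $v$ still carries an internally disjoint system covering all of $\mathsf{in}_L(v)$ and orthogonal to $S_v$, and that largeness is never destroyed along the way. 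The paper achieves this by assembling $\mathcal{P}_v$ layer by layer from segments $\mathcal{P}_{v,\beta+1}$ and $\mathcal{P}^{\gamma}_v$ protected by an explicit recursion condition, and by the delicate choice of the edge set $I$ in Claim \ref{cl: desired I} (using variants of Pym's theorem, the preservation lemma for $T_{D,v}$, and an auxiliary digraph with dummy vertices). Your proposed ``monotonicity/coherence invariant'' names this problem without solving it.
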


The following edge-variant of the problem is wide open even in the countable case, but known for finite digraphs even in a fractional variant with edge-capacities and ``flow-connectivity'' (see \cite{jooGreedoidFlame2021}*{Theorem 4.1}).

\begin{quest}
    Let $ D $ be a countable digraph with $ r\in V(D) $.
    Is it always possible to find a spanning subdigraph $ L $ of $ D $ such that for every $ v\in V(D)-r $ there is a system $ \mathcal{P}_v $ of \emph{edge-disjoint} $ r\rightarrow v $ paths in $ L $ covering all the incoming edges of $ v $ in $ L $ such that one can choose exactly one \emph{edge} from each $ P\in \mathcal{P}_v $ in such a way the resulting edge set is an $ rv $-cut in $ D $?
\end{quest}
\section{Definitions and notation}

\subsection{Digraphs}
All the \emph{digraphs} $ D $ in the paper are simple and have no incoming edges to their `root vertex' $ r $ whenever they have such a root. 
We denote the set of incoming edges of a vertex set $ X $ by $\boldsymbol{\mathsf{in}_{D} (X)} $ and 
$\boldsymbol{\mathsf{out}_{D} (X)}$ stands for the set of the outgoing edges.
For the in-neighbours of $ X $ (i.e.~the tails of the edges in $ \mathsf{in}_{D} (X) $) we write $ \boldsymbol{N^{-}_{D}(X)} $ 
and the out-neighbours $ \boldsymbol{N^{+}_{D}(X)} $ defined analogously.
The subdigraph induced by a vertex set $ U $ is $ \boldsymbol{D[U]} $ and $ \boldsymbol{H\subseteq D} $ expresses that $ H $ 
is a subdigraph of $ D $.
We define $ \boldsymbol{D_0\cap D_1}:=(V_0\cap V_1, E_0\cap E_1) $ if $ D_i=(V_i, E_i) $ are digraphs. 

\subsection{Paths}
All the paths in the paper are finite and directed, repetition of vertices is not allowed.
A path is \emph{trivial} if it consists of a single vertex.
An $ \boldsymbol{X\rightarrow Y} $ path is a path whose first and last vertex lie in the vertex set $ X $ and $ Y $ respectively 
and has has no internal vertex in $ X\cup Y $.
For paths $ P $ and $ Q $ with $ v\in V(P)\cap V(Q) $, let $ \boldsymbol{PvQ} $ be the digraph consisting of the initial segment 
of $ P $ up to $ v $ and the terminal segment of $ Q $ from $ v $.
A \emph{path-system} (i.e.~set of paths) $ \mathcal{P} $ is \emph{disjoint} if the paths in it are pairwise vertex-disjoint. 
We define \emph{internally disjoint} similarly except that the first and last vertices are allowed to be shared.
We denote the united vertex set and edge set of the paths in $ \mathcal{P} $ by $ \boldsymbol{V(\mathcal{P})} $ and by $ \boldsymbol{E(\mathcal{P})} $ respectively.
Let us write $ \boldsymbol{V^{-}(\mathcal{P})} $ and $ \boldsymbol{V^{+}(\mathcal{P})} $ for the respective set of the first and last vertices of the paths in $ \mathcal{P} $. 
We define $ \boldsymbol{E^{-}(\mathcal{P})} $ and $ \boldsymbol{E^{+}(\mathcal{P})} $ similarly with edges but only for path-systems without trivial paths. We write simply $ \boldsymbol{\mathsf{in}_{\mathcal{P}}(v) }$ for the set of the incoming edges of $ v $ in the digraph  $ (\bigcup_{P\in \mathcal{P}}V(P), \bigcup_{P\in \mathcal{P}}E(P)) $.
A $ v $\emph{-fan} is a system of paths sharing only their initial vertex $ v $.
A $v $\emph{-infan} is what we obtain by reversing the edges of a $ v $-fan.
A set $ X\subseteq V-v $ is \emph{linked from} $ v $ in $ D $ if there is a $ v $-fan $ \mathcal{P} $ in $ D $ with $ V^{+}(\mathcal{P})=X $.
Similarly, $ X $ is \emph{linked to} $ v $ if there is a $ v $-infan $ \mathcal{P} $ with $ V^{-}(\mathcal{P})=X $. 

\subsection{Vertex-flames}
Let $ V $ be some fixed vertex set with a prescribed `root vertex' $ r\in V $.
For a(n $ r $-rooted) digraph $ D $, $ v\in V-r $ and an arbitrary set $ I $ we write $\boldsymbol{ D\upharpoonright_v I} $ for the subdigraph we obtain from $ D $ by deleting those incoming edges of $ v $ that are not in $ I+rv $.
For $v \in V-r$ we denote by $\boldsymbol{\mathcal{G}_D(v)}$ the set of those $ I \subseteq \mathsf{in}_{D}(v)$ for which there exists an internally disjoint $r \rightarrow v$ path-system $\mathcal{P}$ in $ D $ with $ E^+ (\mathcal{P}) = I$.
We say that $ D $ has the \emph{vertex-flame property} at $ v\in V-r $ if $ \mathsf{in}_D(v)\in \mathcal{G}_D(v) $ and we call $ D $ a \emph{vertex-flame} if it has the vertex-flame property at every $ v\in V-r $.
The \emph{quasi-vertex-flame property} at $ v $ means that all the finite subsets of $ \mathsf{in}_D(v) $ are in $ \mathcal{G}_D(v) $ and $ D $ is a \emph{quasi-vertex-flame} if it has the quasi-vertex-flame property at every $ v\in V-r $.

\subsection{\texorpdfstring{Erd\H{o}s}{Erdos}-Menger separations and path-systems}\label{subsection EM sep path}
For $ S\subseteq V-r-v $ let $ \boldsymbol{\mathfrak{P}_D(v, S)} $ be the set of those internally disjoint $ r\rightarrow v $ path-systems $ \mathcal{P} $ in $ D $ that are \emph{orthogonal} to $ S $, i.e.~for which $ S $ can be obtained by choosing exactly one internal vertex from each $ P\in \mathcal{P} $ (observe that a path consisting of a single edge cannot be in $\mathfrak{P}_D(v, S) $).
For $ v\in V-r $, we define $ \boldsymbol{\mathfrak{S}_D(v)} $ to be the set of \emph{Erdős-Menger separations} between $ r $ and $ v $, i.e.~the set of those $ S\subseteq V-r-v $ that separate $ r $ from $ v $ in $ D-rv $ (separation means that every $ r\rightarrow v $ path in $ D-rv $ meets $ S $) and for which $ \mathfrak{P}_{D}(v,S)\neq \varnothing $.
We call $ \boldsymbol{\mathfrak{P}_D(v)}:=\bigcup\{\mathfrak{P}_D(v, S):\, S\in \mathfrak{S}_D(v) \} $ the set of the \emph{Erdős-Menger paths-systems}.
Note that the infinite version of Menger's theorem, \cref{t: inf Menger}, applied to $X=N^{+}_{D-rv}(r) $ and $ Y=N^{-}_{D-rv}(v) $ in $ D-rv $ ensures that $ \mathfrak{S}_D(v)\neq \varnothing $ and therefore $ \mathfrak{P}_D(v)\neq \varnothing $. 
Observe that an $ S\in \mathfrak{S}_{D}(v) $ is always a minimal $ rv $-separation in $ D-rv $ since for every $ s\in S $ each $ \mathcal{P}\in \mathfrak{P}_{D}(v, S) $ contains some $ r\rightarrow v $ path $ P_s $ that meets $ S $ only at $ s $.
One can show (see \cite{joo2019complete}*{Theorem 3.5}) that $ \mathfrak{S}_D(v) $ is a complete lattice with respect to the partial order in which $ S\leq T $ if $ S $ separates $ T $ from $ r $ (equivalently $ T $ separates $ S $ from $ v $) in $ D-rv $.
We denote the smallest and the largest element of $ \mathfrak{S}_D(v) $ by $ \boldsymbol{S_{D,v}} $ and $ 
\boldsymbol{T_{D,v}} $, respectively.

\subsection{Large spanning subdigraphs}
A system $\mathcal{P}$ of internally disjoint $ r\rightarrow v $ paths in $ D $ is called \emph{strongly maximal} w.r.t.~$ D $ if for every internally disjoint $ r\rightarrow v $ path-system $ \mathcal{Q} $ we have $ \left|\mathcal{Q}\setminus \mathcal{P}\right|\leq \left|\mathcal{P}\setminus \mathcal{Q}\right| $.
In a finite $ D $ strongly maximal simply means `maximal-sized' but in general digraphs it is a stronger assumption than cardinality-wise maximality.
It is not too hard to prove that the set of the strongly maximal internally disjoint $ r\rightarrow v $ path-systems in $ D $ is exactly $ \mathfrak{P}_D(v) $ if $ rv\notin E(D) $ and the extensions of the elements of $ \mathfrak{P}_D(v) $ with the single-edge path $ rv $ if $ rv\in E(D) $ (see for example \cite{joo2019vertex}*{Proposition 3.4}).
For a fixed $ D $ and $v \in V(D)-r$ we call a spanning subdigraph $ L $ of $ D $ $ v $\emph{-large} w.r.t.~$ D $ if there is a strongly maximal internally disjoint $ r\rightarrow v $ path-system of $ D $ that lies in $ L $, moreover, $ L $ is $ D $-\emph{large} (shortly large if $ D $ is fixed) if it is $ v $-large w.r.t. $ D $ for every $ v\in V-r $.
For a finite $ D $ the largeness of $ L\subseteq D $ is equivalent with the preservation of the local connectivities from the root, i.e.~with $ \kappa_{L}(v)=\kappa_{D}(v) $ for every $ v\in V-r $ but it has a stronger structural meaning for general digraphs.
Largeness of $ L $ can be rephrased as: $ \mathfrak{P}_{D}(v)\cap \mathfrak{P}_{L}(v)\neq \varnothing $ (equivalently $ \mathfrak{S}_{D}(v)\cap \mathfrak{S}_{L}(v)\neq \varnothing $) for every $ v\in V-r $ and $ \mathsf{out}_D(r)\subseteq L $.

\section{Preliminaries and preparations}
\subsection{Elementary submodels}
Elementary submodels are defined for first order structures in logic but for simplicity let us talk only about the special case we use.
An elementary submodel of a set $ A $ is an $ M\subseteq A $ such that for every first order formula $ \varphi(x_1,\dots, x_n) $ in the language of set theory (with free variables $ x_1,\dots, x_n $) and for every $ a_1,\dots, a_n\in M $, the statement $ \varphi(a_1,\dots, a_n) $ is true in the first order structure $ (A, \in|_{A\times A}) $ if and only if it is true in $ (M, \in|_{M\times M}) $.
Elementary submodels provide a powerful method in topology, infinite combinatorics and in other fields to cut up uncountable structures into smaller ``well-behaved'' pieces.
In these applications $ A $ usually consists of the sets whose transitive closure is of cardinality less than $ \lambda $ (denoted by $ H(\lambda) $), where $ \lambda $ is chosen in such a way that $ H(\lambda) $ contains all the sets that are relevant in the proof.
By \emph{elementary submodel} we always mean an elementary submodel of $ H(\lambda) $ for a large enough $ \lambda $.
For a detailed introduction for elementary submodel techniques and their applications in infinite combinatorics we refer to \cite{soukup2011elementary}.
For an elementary submodel $ M $ and digraph $ D=(V,E) $ we let $ \boldsymbol{D\cap M}:=(V\cap M, E\cap M) $.

\subsection{A reduction to quasi-vertex-flames}
\begin{lem}[\cite{joo2019vertex}*{Lemma 2.1}]\label{large quasi flame}
    For every rooted digraph $ D $, there is a quasi-vertex-flame $ F\subseteq D $ such that whenever an $ L\subseteq F $ is $ F $-vertex-large it is $ D $-vertex-large as well. 
\end{lem}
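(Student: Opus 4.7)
The plan is to trim $D$ to $F$ by fixing at each vertex $v$ a ``quasi-independent'' subset $I_v$ of ingoing edges with respect to $\mathcal{G}_D(v)$, and then assembling the necessary witnessing path-systems inside $F$. For each $v \in V - r$ I would apply Zorn's lemma to choose $I_v \subseteq \mathsf{in}_D(v)$ maximal with the property that every finite $A \subseteq I_v$ belongs to $\mathcal{G}_D(v)$; such $I_v$ exists because the defining property is of finite character.

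Next, I would build $F$ so that $\mathsf{out}_D(r) \subseteq E(F)$, $\mathsf{in}_F(v) = I_v$ for every $v \neq r$, and for each $v$ and each finite $A \subseteq I_v$ some internally disjoint $r \to v$ path-system witnessing $A \in \mathcal{G}_D(v)$ lies entirely inside $F$. The assembly proceeds by a transfinite recursion over the pairs $(v, A)$, adjoining at each stage a carefully chosen witnessing system.

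Once $F$ is built, the quasi-vertex-flame property is immediate from the construction. The transfer of largeness from $F$ to $D$ reduces to showing $\mathfrak{P}_F(v) \subseteq \mathfrak{P}_D(v)$ for every $v$, which in turn reduces to showing that each $S \in \mathfrak{S}_F(v)$ is still an $rv$-separator in $D - rv$; this is expected to follow from the maximality of the $I_w$'s applied to a hypothetical $r \to v$ path in $D - rv$ bypassing $S$, whose last edge would contradict the maximality of the relevant $I_w$.

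The hard part is the construction step. A witnessing path-system for $A \subseteq I_v$ chosen in $D$ may, at some intermediate vertex $w$, use an ingoing edge of $w$ outside $I_w$, so the system does not automatically lie in $F$. The remedy is to reroute by Menger-type swaps, invoking Theorem~\ref{t: inf Menger} together with the maximality of $I_w$ to replace offending internal edges with edges of $I_w$. Coordinating these reroutings across all pairs $(v, A)$, without undoing previously placed witnessing systems or enlarging any $\mathsf{in}_F(v)$ beyond $I_v$, is the technical core of the lemma and will be the main obstacle in turning this sketch into a formal argument.
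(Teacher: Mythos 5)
The paper does not prove this lemma itself; it is imported from \cite{joo2019vertex}*{Lemma 2.1}, so your proposal has to stand on its own. Your opening move is the natural and correct one: since ``all finite subsets lie in $\mathcal{G}_D(v)$'' is a property of finite character (the finite members of $\mathcal{G}_D(v)$ are closed under taking subsets --- restrict the witnessing path-system), Zorn's lemma yields a maximal such $I_v\subseteq \mathsf{in}_D(v)$ for each $v$. From there, however, the proposal defers exactly the content of the lemma. The difficulty you name at the end --- that a system witnessing $A\in\mathcal{G}_D(v)$ may enter an intermediate vertex $w$ through an edge outside $I_w$ --- is not a routine technicality to be absorbed by ``Menger-type swaps over all pairs $(v,A)$'': rerouting at $w$ replaces whole initial segments of paths, which can introduce fresh offending edges at other intermediate vertices and can collide with witnessing systems already installed for other pairs, and you give no argument that this process terminates or that the edges accumulated at each $w$ stay inside $I_w$. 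Without that, you have not established that $F$ is a quasi-vertex-flame: maximality of $I_v$ only gives that finite subsets of $I_v$ lie in $\mathcal{G}_D(v)$, whereas the quasi-flame property of $F$ requires them to lie in $\mathcal{G}_F(v)$.

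The largeness-transfer step also contains a concrete error. You correctly reduce it to showing that an $S\in\mathfrak{S}_F(v)$ witnessing the $F$-largeness of $L$ still separates $v$ from $r$ in $D-rv$ (then any $\mathcal{P}\in\mathfrak{P}_L(v,S)\subseteq\mathfrak{P}_D(v,S)$ gives $S\in\mathfrak{S}_D(v)\cap\mathfrak{S}_L(v)$). But the contradiction you propose --- that the last edge of a hypothetical bypassing path would violate maximality of some $I_w$ --- does not go through: an $r\rightarrow v$ path $P$ in $D-rv$ avoiding $S$ may perfectly well have its last edge in $I_v\subseteq E(F)$ and escape $S$ only through deleted edges $xw\in \mathsf{in}_D(w)\setminus I_w$ at intermediate vertices $w\neq v$. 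Maximality of $I_w$ only yields a finite $A\subseteq I_w$ with $A+xw\notin\mathcal{G}_D(w)$, which via an augmenting-walk argument produces a finite, \emph{local} separator around $w$ (in the spirit of Corollary \ref{cor: G-quasi add one}); this by itself says nothing about whether $P$ meets $S$. Converting these local certificates into the global statement that no $r\rightarrow v$ path in $D-rv$ avoids $S$ is precisely where machinery of the kind developed in Section 3 (Lemmas \ref{l: aug walk}, \ref{l: bubble unite} and \ref{l: largest EMsep is real}) is required, and your sketch offers no substitute for it.
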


\begin{cor}\label{cor: wlog D is quasi-flame}
    One may assume without loss of generality in the proof of \cref{t: main result} that $ D $ is a quasi-vertex-flame.
\end{cor}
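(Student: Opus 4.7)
The corollary is essentially a direct unpacking of Lemma~\ref{large quasi flame}, so the proof plan is short and contains no real obstacle.

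First I would start with an arbitrary rooted digraph $D$ of size at most $\aleph_1$ and apply Lemma~\ref{large quasi flame} to obtain a quasi-vertex-flame $F\subseteq D$ with the property that every $F$-large spanning subdigraph $L\subseteq F$ is automatically $D$-large. Clearly $|V(F)|\le |V(D)|\le \aleph_1$ and $r\in V(F)$, so $F$ is again a rooted digraph falling within the hypothesis of Theorem~\ref{t: main result}. Moreover, every subdigraph $L$ of $F$ is also a subdigraph of $D$, and the vertex-flame property of $L$ depends only on $L$ itself (it asks that for each non-root vertex $v$ there exist internally disjoint $r\to v$ paths in $L$ covering $\mathsf{in}_L(v)$), so it is irrelevant whether we view $L$ inside $F$ or inside $D$.

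Next, assuming Theorem~\ref{t: main result} is established under the additional hypothesis that the digraph in question is a quasi-vertex-flame, one applies it to $F$ and obtains a spanning subdigraph $L\subseteq F$ that is simultaneously a vertex-flame and $F$-large. By the defining property of $F$ coming from Lemma~\ref{large quasi flame}, this $L$ is also $D$-large. Since $L$ is a vertex-flame as a subdigraph in its own right, $L$ is a large vertex-flame of $D$ and we are done.

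Thus the only thing the reduction costs is the replacement of $D$ by the subdigraph $F$ provided by Lemma~\ref{large quasi flame}; no extra combinatorial work is required, and there is no genuine obstacle in this step. The real content of the paper will lie in constructing, for a quasi-vertex-flame $D$ of size $\aleph_1$, the large vertex-flame itself.
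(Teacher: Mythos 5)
Your proposal is correct and matches the paper's (implicit) argument: the corollary is stated without proof as an immediate consequence of Lemma~\ref{large quasi flame}, exactly along the lines you describe. The only point worth making explicit is that the $F$ produced by the lemma is a spanning subdigraph of $D$, so that a spanning large vertex-flame of $F$ is indeed a spanning large vertex-flame of $D$.
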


\subsection{Linkability of finite sets from \texorpdfstring{$ \boldsymbol{r} $}{r}}
\begin{lem}[\cite{joo2019vertex}*{Claim 3.14}]\label{l: linked from r U}
    If a finite $ U\subseteq V-r $ is linked from $ r $ in $ D $ and $ L $ is large, then $ U $ is linked from $ r $ in $ L $ as well.
\end{lem}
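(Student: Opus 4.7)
The plan is to argue by contradiction using the finite version of Menger's theorem. Since $U$ is finite, $U$ is linked from $r$ in $L$ if and only if every $rU$-separator in $L$ has size at least $|U|$. So I assume for contradiction that some $X\subseteq V-r$ separates $r$ from $U$ in $L$ with $|X|<|U|$, and aim at a contradiction with the hypothesis that $U$ is linked from $r$ in $D$ (which by Menger in $D$ amounts to every $rU$-separator in $D$ having size at least $|U|$).

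Since $|X|<|U|$, the set $U\setminus X$ is nonempty. For each $v\in U\setminus X$ I invoke the $v$-largeness of $L$ to pick an Erdős-Menger path system $\mathcal{P}_v\in \mathfrak{P}_L(v)\cap \mathfrak{P}_D(v)$ with associated separator $S_v\in \mathfrak{S}_L(v)\cap \mathfrak{S}_D(v)$. Each path in $\mathcal{P}_v$ lies in $L$ and terminates at $v\notin X$, so it meets $X$ at an internal vertex; internal disjointness of $\mathcal{P}_v$ then forces the cardinality bound $|\mathcal{P}_v|=|S_v|\leq |X|$. The strategy is to exploit this $L$-side structure to lift $X$ to an $rU$-separator of size at most $|X|$ in $D$, thereby contradicting the Menger-equivalent form of $U$ being linked from $r$ in $D$.

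The lifting proceeds by analysing an arbitrary $r\to v$ path $Q$ in $D$ for $v\in U\setminus X$. Either $Q$ is the direct edge $rv$, in which case $\mathsf{out}_D(r)\subseteq E(L)$ places $rv$ in $L$, producing an $r\to v$ path in $L$ avoiding $X$ — an immediate contradiction with $X$ being a separator in $L$; or $Q$ meets $S_v$ at some internal vertex $s$, and splicing the initial segment of $Q$ at $s$ with the unique path $P_s\in \mathcal{P}_v$ through $s$ (which lies in $L$) produces an $r\to v$ path in $L$ whose forced meeting-point with $X$ pinpoints a specific element of $X$ that can be assigned to $Q$; correspondingly, tracing this assignment back along $P_s$ identifies an element of $S_v$ to include in the candidate lift $X^{\ast}$.

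The main obstacle is carrying out this lifting \emph{globally}: one needs a single $X^{\ast}$ of size at most $|X|$ that separates $r$ from every $v\in U$ in $D$ simultaneously, but the path systems $\mathcal{P}_v$ for distinct $v$ are a priori uncoordinated, so a naive union of local lifts can exceed $|X|$. Resolving this requires a careful combinatorial tracking exploiting internal disjointness within each $\mathcal{P}_v$, the lattice structure of $\mathfrak{S}_D(v)$ (so that one may replace $S_v$ with a comparable EM-separation better aligned with $X$), and the strict inequality $|X|<|U|$, in order to collapse all the local lifts into a single set of size $\leq |X|$ and complete the contradiction.
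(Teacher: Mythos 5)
The paper does not actually prove this lemma here -- it is imported verbatim from Claim~3.14 of \cite{joo2019vertex} -- so your attempt has to stand on its own, and it does not: the entire content of the lemma sits in the step you defer. Your Menger-type reduction is fine (for finite $U$ the fan version of Menger's theorem is valid in infinite digraphs, e.g.\ via the augmenting walk lemma), and so is extracting systems $\mathcal{P}_v\in\mathfrak{P}_D(v)\cap\mathfrak{P}_L(v)$ with $S_v\in\mathfrak{S}_D(v)\cap\mathfrak{S}_L(v)$ from largeness. But the ``lifting'' of $X$ to a small $rU$-separator $X^{*}$ in $D$ is never constructed: you describe a local assignment $Q\mapsto s\in S_v\mapsto x\in X$, yet you neither specify which elements of $\bigcup_v S_v$ enter $X^{*}$, nor verify that $X^{*}$ meets every $r\rightarrow v$ path in $D$ (such a path may meet $S_v$ only in unselected vertices), nor bound $\left|X^{*}\right|$ by $\left|X\right|$ across the different, uncoordinated systems $\mathcal{P}_v$. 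Your final paragraph concedes exactly this. In addition, the local splicing step is wrong as written: concatenating the \emph{initial segment of the $D$-path $Q$} up to $s$ with a terminal segment of $P_s$ does not yield a path in $L$, because that initial segment may use edges of $D\setminus L$.

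Two further points. First, you miss a simplification that reshapes the problem: the paths of a fan witnessing that $U$ is linked from $r$ in $D$ are pairwise disjoint outside $r$, so any $X\subseteq V-r$ with $\left|X\right|<\left|U\right|$ misses at least one of them entirely; hence it suffices to fix a single $u\in U$ together with one $r\rightarrow u$ path $Q$ of $D$ avoiding $X$, and to reroute $Q$ into $L$ while still avoiding $X$, contradicting that $X$ separates $u$ from $r$ in $L$. Second, even after this reduction the rerouting is where the substance lies: one must exploit, for the heads $z$ of the edges of $Q$ outside $L$, the fact that $S_{L,z}\in\mathfrak{S}_D(z)$ is linked to $z$ inside $L$, and run an exchange argument of Pym/bubble type; an appeal to ``careful combinatorial tracking'' and ``the lattice structure of $\mathfrak{S}_D(v)$'' does not substitute for carrying this out.
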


\begin{cor}[\cite{joo2019vertex}*{Lemma 2.3}]\label{cor: large for a quasi is quasi}
If $ D $ is a quasi-vertex-flame and $ L $ is large, then $ L $ is also a quasi-vertex-flame.
\end{cor}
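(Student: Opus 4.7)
The plan is to verify, for an arbitrary $v\in V-r$, that every finite $I\subseteq \mathsf{in}_L(v)$ lies in $\mathcal{G}_L(v)$, by reducing this to a linking question handled by Lemma \ref{l: linked from r U}.

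First I would transfer a witness from $D$ to $L$ via the tails of the edges in $I$. Because $L\subseteq D$, the given $I$ is also a finite subset of $\mathsf{in}_D(v)$, so the quasi-vertex-flame property of $D$ produces an internally disjoint $r\to v$ path-system $\mathcal{Q}$ in $D$ with $E^{+}(\mathcal{Q})=I$. Let $U\subseteq V-r$ be the set of tails of the edges in $I\setminus\{rv\}$; since all these edges share the head $v$ in the simple digraph $D$, their tails are pairwise distinct, and removing the final edge of each corresponding path in $\mathcal{Q}$ yields an $r$-fan in $D$ with $V^{+}=U$. Thus the finite set $U$ is linked from $r$ in $D$.

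Since $L$ is $D$-large, Lemma \ref{l: linked from r U} now supplies an $r$-fan $\mathcal{F}\subseteq L$ with $V^{+}(\mathcal{F})=U$. Appending to each $F\in\mathcal{F}$ (ending at some $u\in U$) the unique edge $uv\in I$, and further adjoining the single-edge path $rv$ to the resulting system if $rv\in I$, produces a path-system $\mathcal{P}\subseteq L$ of internally disjoint $r\to v$ paths with $E^{+}(\mathcal{P})=I$. The only point to check is that this extension preserves internal disjointness, but the newly introduced internal vertices are precisely the pairwise distinct members of $U$, so it does.

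I do not foresee a substantial obstacle: the statement is essentially a direct consequence of Lemma \ref{l: linked from r U}, and the sole delicate point is the bookkeeping needed to handle the possible presence of the edge $rv$ in $I$, which is disposed of by treating it as a separate single-edge path outside the fan.
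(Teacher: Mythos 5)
Your reduction to Lemma \ref{l: linked from r U} is the right idea and is indeed how this corollary is meant to follow from that lemma, but the final step has a genuine gap. Lemma \ref{l: linked from r U} only guarantees that $U$ is linked from $r$ in $L$ by \emph{some} $r$-fan $\mathcal{F}$; it gives no control over which vertices the paths of $\mathcal{F}$ traverse. In particular one path $F_0\in\mathcal{F}$, ending at some $u_0\in U$, may contain $v$ as an internal vertex (at most one path can, since the fan paths meet only in $r$, but one is enough). For that path, $F_0$ followed by the edge $u_0v$ is not a path at all --- it visits $v$ twice --- so the proposed system $\mathcal{P}$ is not an internally disjoint $r\rightarrow v$ path-system and does not witness $I\in\mathcal{G}_L(v)$. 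Your disjointness check only accounts for the vertices of $U$ becoming internal; it overlooks that $v$ itself may already lie on a fan path.

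This is not a cosmetic omission: what you actually need is that $U$ is linked from $r$ in $L$ by a fan \emph{avoiding} $v$, and for $|I|=1$ this is essentially the statement being proved --- if every $r\rightarrow u$ path of $L$ passed through $v$, then no internally disjoint $r\rightarrow v$ path-system in $L$ could cover the edge $uv$, i.e.\ $L$ would fail the quasi-vertex-flame property at $v$. The witness coming from the quasi-vertex-flame property of $D$ does avoid $v$ (after truncation, $v$ survives only as the deleted endpoint), so $U$ is in fact linked from $r$ already in $D-\mathsf{out}_D(v)$; what must be transferred to $L$ is this \emph{stronger} linkage, for instance by applying the linking lemma to $D-\mathsf{out}_D(v)$ and $L-\mathsf{out}_L(v)$ after verifying that largeness survives deleting $\mathsf{out}(v)$, or by an explicit rerouting argument showing the fan can be chosen off $v$. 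Some such additional argument is required and is missing from your write-up.
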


\subsection{Variants of Pym's theorem}
\begin{thm}[Pym's theorem \cite{pym1969linking}*{The Linkage Theorem}]
    Let $ X,Y\subseteq V $, furthermore, let $ \mathcal{P} $ and $ \mathcal{Q} $ be disjoint systems of $ X\rightarrow Y $ paths.
    Then there is a system $ \mathcal{R} $ of disjoint $ X\rightarrow Y $ paths such that $ V^{-}(\mathcal{R}) \supseteq V^{-}(\mathcal{P})$ and $ V^{+}(\mathcal{R}) \supseteq V^{+}(\mathcal{Q})$, moreover, each $ R\in \mathcal{R} $ is either in $ \mathcal{P}\cup \mathcal{Q} $ or there are $ P\in \mathcal{P},\ Q\in \mathcal{Q} $ and $ v_R\in V(P)\cap V(Q) $ such that $ R=Pv_R Q $. 
\end{thm}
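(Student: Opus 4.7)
My plan is to construct $\mathcal{R}$ by a maximality argument. I would consider the collection $\mathfrak{R}$ of all vertex-disjoint systems $\mathcal{R}$ of $X\rightarrow Y$ paths whose members are each either in $\mathcal{P}\cup\mathcal{Q}$ or of the form $PvQ$ as in the statement, constrained so that every $P\in\mathcal{P}$ and every $Q\in\mathcal{Q}$ contributes to at most one member of $\mathcal{R}$. Chains in $(\mathfrak{R},\subseteq)$ have upper bounds via set-theoretic union (the form constraint and vertex-disjointness are preserved in the limit), so Zorn's lemma supplies a maximal $\mathcal{R}^*\in\mathfrak{R}$. The real content then is to show $V^{-}(\mathcal{R}^*)\supseteq V^{-}(\mathcal{P})$ and $V^{+}(\mathcal{R}^*)\supseteq V^{+}(\mathcal{Q})$.

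The key combinatorial step should run as follows. Suppose, toward a contradiction, that $x\in V^{-}(\mathcal{P})\setminus V^{-}(\mathcal{R}^*)$ and let $P_x\in\mathcal{P}$ start at $x$. Walk along $P_x$ from $x$ to the first vertex $v$ lying in $V(\mathcal{R}^*)$; if no such $v$ exists, append $P_x$ to $\mathcal{R}^*$, contradicting maximality. Let $R\in\mathcal{R}^*$ contain $v$. Vertex-disjointness of $\mathcal{P}$ rules out $R$ being a full path in $\mathcal{P}$ (otherwise $v$ would be in two members of $\mathcal{P}$). If $R=Q\in\mathcal{Q}$, substitute $P_xvQ$ for $Q$. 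If $R=P'v'Q'$, then $v$ must lie on the $Q'$-segment of $R$, since $v\in V(P')\cap V(P_x)$ would force $P_x=P'$ and hence $x=V^{-}(P')\in V^{-}(\mathcal{R}^*)$; now substitute $P_xvQ'$ for $R$. The missing endpoint case $y\in V^{+}(\mathcal{Q})\setminus V^{+}(\mathcal{R}^*)$ is entirely symmetric, walked backwards along the corresponding $Q_y\in\mathcal{Q}$.

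The main obstacle I anticipate is the bookkeeping around the switched case: replacing $R=P'v'Q'$ by $P_xvQ'$ evicts $V^{-}(P')$ from $V^{-}(\mathcal{R}^*)$, so inclusion-maximality alone does not immediately yield a net gain when $V^{-}(P')\in V^{-}(\mathcal{P})$. The correct framework must therefore augment via an alternating chain of such substitutions through the bipartite auxiliary structure on $\mathcal{P}\cup\mathcal{Q}$ whose edges are their common vertices; one iterates the substitution starting from the newly freed $P'$, and the chain terminates when it finally hits a plain $Q\in\mathcal{Q}$ or an unused portion of some path. For possibly infinite path-systems this requires either a more refined partial order on $\mathfrak{R}$ capturing the chain, or a transfinite-induction argument relying on the invariant that no $P$ or $Q$ is ever used twice along the chain; this is the delicate technical core I would need to carry out.
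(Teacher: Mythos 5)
This statement is quoted by the paper from Pym's original article and is not proved there, so there is no in-paper argument to compare against; the proposal has to stand on its own as a proof of the Linkage Theorem, and as written it does not. The framework you set up is internally inconsistent with the augmentation you then perform: Zorn's lemma gives you an $\mathcal{R}^*$ that is maximal with respect to $\subseteq$, but every one of your repair steps \emph{replaces} a member of $\mathcal{R}^*$ (substituting $P_xvQ$ for $Q$, or $P_xvQ'$ for $P'v'Q'$) rather than adding a new path to it. The modified system is therefore not a proper superset of $\mathcal{R}^*$, and no contradiction with $\subseteq$-maximality arises. To make the contradiction work you would need a different quasi-order (say, comparing the sets of covered endpoints in $V^{-}(\mathcal{P})\cup V^{+}(\mathcal{Q})$), but then chains in that order are no longer nested families of paths and it is not clear they have upper bounds, so Zorn's lemma no longer applies off the shelf. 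You notice this yourself, but "a more refined partial order capturing the chain" is precisely the content of the theorem, not a detail.

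The second, deeper gap is the alternating chain itself. In the infinite setting the chain of substitutions $P_x\to P^{(1)}\to P^{(2)}\to\cdots$ need not terminate, and you give no argument that an infinite chain has a usable limit: each step reroutes the previously evicted path, but one must verify that every individual path is modified only finitely often (otherwise the limit object is undefined), that the initial segments chosen at later stages remain disjoint from the paths already rerouted at earlier stages of the \emph{same} chain (your rule "walk to the first vertex of $V(\mathcal{R}^*)$" refers to a system that is changing under you), and that the invariant "no $P$ or $Q$ is used twice along the chain" actually holds rather than being assumed. Finally, the two endpoint conditions interact: repairing a missing $y\in V^{+}(\mathcal{Q})$ can evict a previously covered $x\in V^{-}(\mathcal{P})$ and vice versa, so the two "symmetric" cases cannot be handled independently and then superposed. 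Pym's actual proof resolves all of this by a transfinite recursion in which, for each $P\in\mathcal{P}$, one tracks a switching vertex that only ever advances along the (finite) path $P$ and shows the whole process stabilizes; that monotonicity device, or an equivalent one, is exactly what is missing from your sketch. As it stands the proposal is a correct identification of the difficulty rather than a proof.
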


\begin{cor}\label{cor: Pym}
    Suppose that $ \mathcal{P} $ links $ S$ to $ v $ and $ \mathcal{Q} $ is a $ v $-infan with $ V(\mathcal{Q})\cap S=V^{-}(\mathcal{Q}) $.
    Then there is a $ v $-infan $ \mathcal{R} $ with $ V^{-}(\mathcal{R})=S $ covering $ E^{+}( \mathcal{Q}) $, furthermore, each $ R\in \mathcal{R} $ is either in $ \mathcal{P}\cup \mathcal{Q} $ or there are $ P\in \mathcal{P},\ Q\in \mathcal{Q} $ and $ v_R\in V(P)\cap V(Q) $ such that $ R=Pv_R Q $. 
\end{cor}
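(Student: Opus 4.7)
The plan is to reduce to a direct application of Pym's linkage theorem by a vertex-splitting trick. Working in $D-v$, I would shorten each $P\in\mathcal{P}$ (resp.\ $Q\in\mathcal{Q}$) to $\tilde{P}$ (resp.\ $\tilde{Q}$) by deleting the terminal vertex $v$; these form pairwise disjoint path-systems $\tilde{\mathcal{P}}$ and $\tilde{\mathcal{Q}}$ whose terminal vertices are the predecessors $u_P$ and $u_Q$ of $v$ in the corresponding paths.

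A naive application of Pym with $X=S$ and $Y$ equal to the union of these predecessor sets fails, because an internal vertex of some $\tilde{P}$ may coincide with some $u_Q$, violating the strict ``$X\rightarrow Y$-path'' requirement. To circumvent this, adjoin for each $P\in\mathcal{P}$ a fresh sink $v_P$ with the sole edge $u_P v_P$, and analogously a fresh sink $v_Q$ for each $Q\in\mathcal{Q}$. Extending $\tilde{P}$ and $\tilde{Q}$ along the new edges produces systems $\hat{\mathcal{P}}, \hat{\mathcal{Q}}$ which, with $X:=S$ and $Y:=\{v_P:P\in\mathcal{P}\}\cup\{v_Q:Q\in\mathcal{Q}\}$, are genuine disjoint systems of $X\rightarrow Y$ paths: strictness at $X$ follows from the internal disjointness of the $v$-infan $\mathcal{P}$ (for $\hat{\mathcal{P}}$) and from the hypothesis $V(\mathcal{Q})\cap S=V^{-}(\mathcal{Q})$ (for $\hat{\mathcal{Q}}$), while strictness at $Y$ is automatic since each new sink is attached to a single path and appears only terminally.

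Pym's theorem then delivers a disjoint system $\hat{\mathcal{R}}$ of $X\rightarrow Y$ paths with $V^{-}(\hat{\mathcal{R}})\supseteq S$ and $V^{+}(\hat{\mathcal{R}})\supseteq\{v_Q:Q\in\mathcal{Q}\}$, each $\hat{R}$ lying in $\hat{\mathcal{P}}\cup\hat{\mathcal{Q}}$ or being a concatenation $\hat{P}v_R\hat{Q}$. Since the added sinks are never shared across different paths, the switch vertex $v_R$ automatically belongs to $V(P)\cap V(Q)-\{v\}$. Undoing the splitting---deleting the sink vertex from each $\hat{R}$ and appending the edge into $v$---yields the required $\mathcal{R}$: its initial vertex set is exactly $S$, coverage of every $v_Q$ translates to coverage of the last edge $u_Q v$ of every $Q\in\mathcal{Q}$, and each $R$ is either in $\mathcal{P}\cup\mathcal{Q}$ or of the form $Pv_R Q$.

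The main obstacle is exactly the one resolved by the splitting: without the auxiliary sinks, the strict $X\rightarrow Y$ condition in the quoted Pym's theorem cannot be verified, since the hypotheses permit arbitrary intersections between paths of $\mathcal{P}$ and predecessor vertices of $\mathcal{Q}$. Once the splitting is in place, everything else is routine bookkeeping.
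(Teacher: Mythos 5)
Your argument is correct and is essentially the routine vertex-splitting reduction to Pym's Linkage Theorem that the paper leaves implicit (it states this corollary without proof, and uses the same splitting device explicitly for Corollary~\ref{cor: Pym+}). The auxiliary sinks correctly enforce the strict $X\rightarrow Y$ condition, and all the verifications (strictness at $X$ via the infan property and the hypothesis $V(\mathcal{Q})\cap S=V^{-}(\mathcal{Q})$, the translation of coverage of the sinks $v_Q$ into coverage of $E^{+}(\mathcal{Q})$, and the form of the switch vertices) go through as you describe.
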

We need one more version of the theorem in which $ r\in S $ and more than one path in $ \mathcal{P} $ and in $ \mathcal{R} $ may start in $ r $.
This variant can be reduced to \cref{cor: Pym} by splitting $ r $ into a vertex set $V_r:= \{ r_e: e\in \mathsf{out}_D(r) \} $ where $ r_e $ inherits the single outgoing edge $ e $ of $ r $.
\begin{cor}\label{cor: Pym+}
    Suppose that $ \mathcal{P} $ is a system of $ S\rightarrow v $ paths with $ v\notin S $ and such that $ V(P_0)\cap V(P_1)-v\subseteq \{ r \} $ for every $ P_0\neq P_1 $ from $ \mathcal{P} $ and suppose $ \mathcal{Q} $ is a $ v $-infan with $ V(\mathcal{Q})\cap S=V^{-}(\mathcal{Q}) $.
    Then there is a system $ \mathcal{R} $ of $ S \rightarrow v $ paths with $V(R_0)\cap V(R_1)-v\subseteq\{ r \} $ for every $ R_0\neq R_1 $ from $ \mathcal{R} $ covering $ V^{-}(\mathcal{P})\cup E^{+}( \mathcal{Q}) $, furthermore, each $ R\in \mathcal{R} $ is either in $ \mathcal{P}\cup \mathcal{Q} $ or there are $ P\in \mathcal{P},\ Q\in \mathcal{Q} $ and $ v_R\in V(P)\cap V(Q) $ such that $ R=Pv_R Q $. 
\end{cor}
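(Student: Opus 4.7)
The plan is to reduce to Corollary \ref{cor: Pym} via a vertex-splitting trick at $r$. I would construct an auxiliary digraph $D'$ by replacing $r$ with the set $V_r := \{r_e : e \in \mathsf{out}_D(r)\}$; for each edge $e = rw \in \mathsf{out}_D(r)$ the digraph $D'$ contains the edge $r_e w$, and all other edges and vertices are kept. Since $r$ has no ingoing edges by convention, no further modifications are needed and $V(D') = (V-r)\cup V_r$.

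Next I would translate $\mathcal{P}$ and $\mathcal{Q}$ into $D'$ as follows: any path starting at $r$ with first edge $e$ has its initial vertex $r$ replaced by $r_e$; paths avoiding $r$ are unchanged. Because $r$ has no ingoing edges, $r$ can never appear as an internal vertex, so the operation is well-defined on every path. Call the resulting systems $\mathcal{P}'$ and $\mathcal{Q}'$. The key observation is that two paths of $\mathcal{P}$ starting at $r$ must have distinct first edges, otherwise they would share the vertex immediately after $r$, contradicting $V(P_0)\cap V(P_1)-v\subseteq \{r\}$. Hence the paths in $\mathcal{P}'$ become pairwise disjoint except at $v$, i.e.\ a $v$-infan linking $S' := V^{-}(\mathcal{P}')$ to $v$ in $D'$. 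The system $\mathcal{Q}'$ is still a $v$-infan. Moreover, from $V(\mathcal{Q})\cap S=V^{-}(\mathcal{Q})$ and the fact that $v\notin S$, one checks that no internal vertex of any $Q'\in \mathcal{Q}'$ lies in $S'$; after possibly enlarging $S'$ by the $r_e$'s that are starting vertices of paths in $\mathcal{Q}'$ (and supplementing $\mathcal{P}'$ with the corresponding paths of $\mathcal{Q}'$ in order to preserve the identity $V^{-}(\mathcal{P}')=S'$), the hypothesis $V(\mathcal{Q}')\cap S'=V^{-}(\mathcal{Q}')$ of Corollary \ref{cor: Pym} is in force.

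Applying Corollary \ref{cor: Pym} in $D'$ yields a $v$-infan $\mathcal{R}'$ with $V^{-}(\mathcal{R}')=S'$ covering $E^{+}(\mathcal{Q}')$, where each $R'\in \mathcal{R}'$ is either a path of $\mathcal{P}'\cup \mathcal{Q}'$ or of the form $P'v_{R'}Q'$. I would then pull $\mathcal{R}'$ back to $D$ by identifying every $r_e\in V_r$ with $r$: the resulting system $\mathcal{R}$ consists of $S\to v$ paths, its only possible internal-vertex collisions come from the collapsed $V_r$-copies and so satisfy $V(R_0)\cap V(R_1)-v\subseteq \{r\}$, it covers $V^{-}(\mathcal{P})\cup E^{+}(\mathcal{Q})$, and the ``either–or'' structural alternative transfers verbatim from $\mathcal{R}'$ because the collapse does not affect the internal vertex $v_{R'}$.

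The step I expect to be the most delicate is the matching of starting-vertex sets: ensuring that after splitting and translation we really have $V^{-}(\mathcal{Q}')\subseteq V^{-}(\mathcal{P}')=S'$, so that Corollary \ref{cor: Pym} is directly applicable. This forces one to be careful about what is absorbed into $S'$ and may require augmenting $\mathcal{P}'$ with a few paths of $\mathcal{Q}'$, as sketched above; once this bookkeeping is handled, the remainder of the argument is a straightforward transfer through the splitting map.
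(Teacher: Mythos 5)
Your overall strategy --- splitting $r$ into $V_r$ and reducing to Corollary \ref{cor: Pym} --- is exactly the one-sentence reduction the paper itself indicates, and most of your bookkeeping (distinct first edges at $r$, $r$ never internal, internal vertices of $\mathcal{Q}'$ avoiding the translated $S$) is correct. But the point you yourself flag as delicate is where the argument breaks, for two reasons. First, the mismatch of starting-vertex sets is not confined to the copies $r_e$: Corollary \ref{cor: Pym+} only assumes that the paths of $\mathcal{Q}$ start in $S$ and meet $S$ nowhere else, while $\mathcal{P}$ is merely \emph{some} system of $S\rightarrow v$ paths, so $V^{-}(\mathcal{Q})$ may contain ordinary vertices of $S\setminus V^{-}(\mathcal{P})$ that have nothing to do with $r$. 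This actually happens in the paper's one application of the corollary (in the proof of Claim \ref{cl: desired I}, with $S=T\cup V_\alpha$, the paths of $\mathcal{Q}'$ may start at vertices of $V_\alpha$ that are not starting vertices of $\mathcal{P}'$). Second, and more seriously, your repair --- adjoining to $\mathcal{P}'$ those paths of $\mathcal{Q}'$ whose first vertex is missing from $S'$ --- is not legitimate: no disjointness between $\mathcal{P}$ and $\mathcal{Q}$ is assumed, so an adjoined path of $\mathcal{Q}'$ may meet paths of $\mathcal{P}'$ in internal vertices, and the augmented system is then no longer a $v$-infan, i.e.\ it does not ``link $S'$ to $v$'' in the sense required by the hypothesis of Corollary \ref{cor: Pym}.

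The clean way to close the gap is to bypass Corollary \ref{cor: Pym} and invoke Pym's Linkage Theorem directly: besides splitting $r$, also split $v$ into $\{v_f: f\in \mathsf{in}_D(v)\}$ with $v_f$ inheriting the single ingoing edge $f$, and apply the theorem with $X:=V^{-}(\mathcal{P}')\cup V^{-}(\mathcal{Q}')$ and $Y:=\{v_f: f\in \mathsf{in}_D(v)\}$. Your verification that internal vertices of both systems avoid the translated $S$ (hence avoid $X$) shows that $\mathcal{P}'$ and $\mathcal{Q}'$ are each disjoint systems of $X\rightarrow Y$ paths; Pym's theorem requires no containment between $V^{-}(\mathcal{P}')$ and $V^{-}(\mathcal{Q}')$, and its conclusion $V^{-}(\mathcal{R}')\supseteq V^{-}(\mathcal{P}')$ and $V^{+}(\mathcal{R}')\supseteq V^{+}(\mathcal{Q}')$ collapses back to exactly the required covering of $V^{-}(\mathcal{P})\cup E^{+}(\mathcal{Q})$, with the structural alternative transferring as in your last paragraph.
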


\begin{cor}\label{cor: EMpathsys covers I}
    Let $ S\in \mathfrak{S}_D(v) $ and let $ I\in \mathcal{G}_{D}(v) $.
    Then there is an $\mathcal{R}\in \mathfrak{P}_{D}(v,S) $ with $I-rv\subseteq E^{+}(\mathcal{R}) $.
\end{cor}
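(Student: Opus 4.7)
The plan is to glue a witness for $I\in\mathcal{G}_D(v)$ with a chosen element of $\mathfrak{P}_D(v,S)$ via Corollary \ref{cor: Pym}, and then to re-extend the resulting $v$-infan backwards to $r$ using initial segments of the original Erdős-Menger paths.

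First, I would fix some $\mathcal{P}\in \mathfrak{P}_D(v,S)$ and an internally disjoint $r\rightarrow v$ path-system $\mathcal{Q}$ with $E^+(\mathcal{Q})=I$. Setting $\mathcal{Q}':=\mathcal{Q}\setminus\{rv\}$ gives $E^+(\mathcal{Q}')=I-rv$, and every $Q\in\mathcal{Q}'$ lies in $D-rv$. Since $S$ separates $r$ from $v$ in $D-rv$, each such $Q$ meets $S$; let $s_Q$ denote its last $S$-vertex. By orthogonality of $\mathcal{P}$, each $P\in \mathcal{P}$ has a unique $S$-vertex, which I denote $s_P$. I would then form two $v$-infans by truncation: $\mathcal{P}'$ consisting of the terminal subpath of each $P\in\mathcal{P}$ starting from $s_P$, and $\mathcal{Q}_2$ consisting of the terminal subpath of each $Q\in\mathcal{Q}'$ starting from $s_Q$. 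Internal disjointness of $\mathcal{P}$ and $\mathcal{Q}$ together with $s_P,s_Q\in V-r-v$ guarantees that these really are $v$-infans, and by construction $V^-(\mathcal{P}')=S$ and $V(\mathcal{Q}_2)\cap S=V^-(\mathcal{Q}_2)$.

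Corollary \ref{cor: Pym} applied to $\mathcal{P}'$ and $\mathcal{Q}_2$ then produces a $v$-infan $\mathcal{R}'$ with $V^-(\mathcal{R}')=S$ covering $E^+(\mathcal{Q}_2)=I-rv$, where each $R'\in\mathcal{R}'$ either lies in $\mathcal{P}'\cup\mathcal{Q}_2$ or has the form $P v_{R'} Q$ for some $P\in \mathcal{P}'$ and $Q\in \mathcal{Q}_2$. For each $s\in S$, let $R'_s$ be the unique path of $\mathcal{R}'$ starting at $s$ and let $P_s$ be the unique path of $\mathcal{P}$ passing through $s$. I define the candidate $R_s := P_s s R'_s$ (concatenating the initial segment of $P_s$ up to $s$ with $R'_s$) and set $\mathcal{R}:=\{\,R_s:s\in S\,\}$. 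By construction, the last edge of $R_s$ equals the last edge of $R'_s$, so once $\mathcal{R}\in\mathfrak{P}_D(v,S)$ is established then $E^+(\mathcal{R})\supseteq E^+(\mathcal{Q}_2)=I-rv$ as required.

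The crux, and the main obstacle, is therefore to verify that each $R_s$ is a simple path, that $\mathcal{R}$ is internally disjoint, and that $V(R_s)\cap S=\{s\}$. The key observation is that both the initial segment of $P_s$ up to $s$ and the path $R'_s$ intersect $S$ only in $\{s\}$: for the former this is inherited from orthogonality of $\mathcal{P}$, and for the latter it follows from the $v$-infan property of $\mathcal{R}'$ together with $V^-(\mathcal{R}')=S$, since any additional $S$-vertex on $R'_s$ would coincide with the starting vertex of some other $R'_{s'}$, violating pairwise $v$-infan disjointness. The separation bookkeeping then handles the rest: any hypothetical extra shared vertex, either between the initial segment of $P_s$ up to $s$ and $R'_s$, or between $R_{s_1}$ and $R_{s_2}$ for $s_1\neq s_2$, could be spliced into an $r\rightarrow v$ walk in $D-rv$ avoiding $S$ altogether, contradicting $S\in \mathfrak{S}_D(v)$. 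Carrying this case analysis out completes the proof.
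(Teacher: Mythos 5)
Your construction is exactly the paper's: truncate both $\mathcal{P}$ and the witness $\mathcal{Q}$ at $S$, apply Corollary \ref{cor: Pym} to the resulting $v$-infans, and re-extend backwards to $r$ along the initial segments of the paths in $\mathcal{P}$. The paper leaves the verification of internal disjointness and orthogonality implicit, whereas you spell it out (correctly, via the observation that each piece meets $S$ only at its designated vertex, so any illegal intersection would splice into an $S$-avoiding $r\rightarrow v$ path in $D-rv$); the proposal is correct.
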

\begin{proof}
   Let $ \mathcal{P}\in \mathfrak{P}_{D}(v,S) $ and let $ \mathcal{Q} $ be a witness for $ I\in \mathcal{G}_{D}(v) $.
   We define $ \mathcal{Q}' $ to be set the of terminal segments of the paths in $ \mathcal{Q} $ from the last intersection with $ S $.
    Let $ \mathcal{P}' $ consist of the terminal segments of the paths in $ \mathcal{P} $ from $ S $.
    We apply \cref{cor: Pym} with $ \mathcal{P}' $ and $ \mathcal{Q}' $ and extend the paths in the resulting $ \mathcal{R}' $ backwards to $ r $ by the initial segments of the paths in $ \mathcal{P} $ up to $ S $ to obtain $ \mathcal{R} $.
\end{proof}

\subsection{Preservation of the vertex-flame property}
\begin{lem}[\cite{erde2020enlarging}*{Lemma 4.10}]
    Suppose that~${I \in \mathcal{G}_D(w)}$ such that~${(I+f) \in \mathcal{G}_D(w)}$ for every~${f \in\mathsf{in}_D(w) \setminus I}$. 
    Assume that there is a~${uv \in E(D)}$ with~${u \neq r, v \neq w}$ in such a way that~${I \notin \mathcal{G}_{D-uv}(w)}$. 
    Then there exists a set~${S \subseteq V - r}$ with $ v\in S $ which is linked from $ r $ by a path-system~$\mathcal{P}$, such that~$S$ separates $ N^{-}_{D}(v)-u $ from~$r$. In particular,~$uv$ is the last edge of some~${P_{uv} \in \mathcal{P}}$.
\end{lem}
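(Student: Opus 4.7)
The plan is to produce the desired fan $\mathcal{P}$ and separator $S=V^+(\mathcal{P})$ by combining a witness for $I\in\mathcal{G}_D(w)$ with an Erd\H{o}s--Menger separation in a suitable auxiliary digraph, then carrying out a Pym-type surgery to force $v$ into the separator.

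First, fix any witness $\mathcal{Q}$ for $I\in\mathcal{G}_D(w)$. Because $I\notin\mathcal{G}_{D-uv}(w)$, some $P^{*}\in\mathcal{Q}$ must contain the edge $uv$---otherwise $\mathcal{Q}$ would already be a witness in $D-uv$. Let $P_{uv}$ denote the initial segment of $P^{*}$ from $r$ to $v$; this is an $r\to v$ path in $D$ whose last edge is $uv$. Now pass to the auxiliary digraph $D':=D-(\mathsf{in}_D(v)\setminus\{uv\})$, obtained from $D$ by deleting every in-edge of $v$ except $uv$. Since $v$ is not an internal vertex of $P_{uv}$, the path $P_{uv}$ still lies in $D'$, while in $D'$ the only way to reach $v$ is along $uv$.

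Next, apply the Aharoni--Berger theorem (Theorem \ref{t: inf Menger}) in $D'$ between $\{r\}$ and $T\cup\{v\}$, where $T:=N^{-}_D(v)\setminus\{u\}$. Truncating each resulting path at its chosen separator vertex yields an $r$-fan $\mathcal{P}_0\subseteq D'$ with $V^+(\mathcal{P}_0)=S_0$ separating $T\cup\{v\}$ from $r$ in $D'$. Since $P_{uv}$ is an $r\to v$ path in $D'$, it meets $S_0$; if it does so at $v$ itself, we set $\mathcal{P}:=\mathcal{P}_0$ and are done. Otherwise the meeting point $s$ lies in the interior of $P_{uv}$, and we apply Corollary \ref{cor: Pym} (or its variant \ref{cor: Pym+}) to combine $\mathcal{P}_0$ with $P_{uv}$, producing a new $r$-fan $\mathcal{P}\subseteq D'$ whose endpoint-set contains $v$ (reached via $uv$, the unique in-edge of $v$ in $D'$) and still separates $T\cup\{v\}$ from $r$ in $D'$.

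Transferring back to $D$: with $S:=V^+(\mathcal{P})$, any $r\to T$ path in $D$ either stays in $D'$ (and so meets $S$) or uses some edge $yv$ with $y\in T$, in which case it passes through $v\in S$; thus $S$ separates $N^{-}_D(v)\setminus\{u\}$ from $r$ in $D$, and the $P_{uv}\in\mathcal{P}$ ends with $uv$ by construction. I expect the main obstacle to be the Pym-type rerouting step, where the surgery must simultaneously preserve internal disjointness, the separator property, and the $uv$-endpoint condition; this is precisely where the hypothesis $(I+f)\in\mathcal{G}_D(w)$ for every $f\in\mathsf{in}_D(w)\setminus I$ should be invoked, providing the extendability needed to reroute the path of $\mathcal{P}_0$ originally ending at $s$ toward $v$ without sacrificing the separator.
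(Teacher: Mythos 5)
This lemma is quoted by the paper from \cite{erde2020enlarging}*{Lemma 4.10} without proof, so your argument has to stand on its own, and as written it does not. The hypotheses on $I$ enter your proof only once: to extract an $r\rightarrow v$ path $P_{uv}$ whose last edge is $uv$. Everything afterwards is a general construction that would apply to any digraph containing such a path. But the corresponding hypothesis-free statement is false. Take $V=\{r,a,u,v\}$ with edge set $\{ra,\,au,\,uv,\,av\}$. The path $r,a,u,v$ ends with $uv$, and $N^{-}_{D}(v)-u=\{a\}$, so any $S\subseteq V-r$ separating it from $r$ must contain $a$; together with $v\in S$ this forces $\{a,v\}\subseteq S$, yet no such $S$ is linked from $r$, because every $r\rightarrow v$ path passes through $a$ and the paths of an $r$-fan may share only $r$. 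Hence no proof that uses the hypotheses merely to produce $P_{uv}$ can succeed.

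The concrete break is the rerouting step. The Aharoni--Berger separator $S_0$ you obtain in $D'$ need not contain $v$: it can sit at a bottleneck strictly before $v$ (the vertex $a$ above), and then there is in general no fan from $r$ whose endpoint set contains $v$ and still separates $T\cup\{v\}$ --- in the example such an object simply does not exist, so no Pym-type surgery can manufacture it. You flag this step yourself as ``the main obstacle'' and say the hypothesis that $(I+f)\in\mathcal{G}_D(w)$ for every $f\in\mathsf{in}_D(w)\setminus I$ ``should be invoked'' there, but that is precisely the substance of the lemma: one must show that realisability of $I$ and of all its one-edge extensions at $w$, combined with the failure of $I$ in $D-uv$, excludes any bottleneck strictly before $v$. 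That argument is entirely missing. (Two smaller points: applying Theorem \ref{t: inf Menger} with $X=\{r\}$ yields a single path, since the paths must be pairwise disjoint; you need the fan version obtained by taking $X=N^{+}(r)$ or by splitting $r$ as in Corollary \ref{cor: Pym+}. Also, Corollary \ref{cor: Pym} concerns infans into a fixed vertex, so the roles of $\mathcal{P}_0$ and $P_{uv}$ do not match its hypotheses as you invoke it.)
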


 We are interested only in the special case where $ I=\mathsf{in}_D(w) $:
\begin{cor}\label{cor: G-quasi add one}
    Suppose that~${\mathsf{in}_D(w)\in \mathcal{G}_D(w)}$ and there is a~${uv \in E(D)}$ with~${u \neq r, v \neq w}$ for which~${\mathsf{in}_D(w) \notin \mathcal{G}_{D-uv}(w)}$. 
    Then there exists a set~${S \subseteq V - r}$ with $ v\in S $ which is linked from $ r $ by a path-system~$\mathcal{P}$, such that~$S$ separates $ N^{-}_{D}(v)-u $ from~$r$. 
    In particular,~$uv$ is the last edge of some~${P_{u,v} \in \mathcal{P}}$.
\end{cor}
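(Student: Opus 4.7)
The plan is essentially to observe that the corollary is an immediate specialization of the preceding lemma. Set $I := \mathsf{in}_D(w)$. The lemma's first hypothesis, namely $I \in \mathcal{G}_D(w)$, is precisely what is being assumed about $\mathsf{in}_D(w)$. The second hypothesis of the lemma asks that $(I + f) \in \mathcal{G}_D(w)$ for every $f \in \mathsf{in}_D(w) \setminus I$, but with our choice $I = \mathsf{in}_D(w)$ this difference is empty, so the requirement is vacuous. The third hypothesis of the lemma, concerning the existence of an edge $uv$ with $u \neq r$, $v \neq w$ and $I \notin \mathcal{G}_{D-uv}(w)$, is exactly the assumption in the corollary.

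Thus all hypotheses of the lemma hold, and the conclusion of the lemma gives exactly the conclusion of the corollary: a set $S \subseteq V - r$ with $v \in S$, linked from $r$ by a path-system $\mathcal{P}$, separating $N^{-}_{D}(v) - u$ from $r$, and with $uv$ appearing as the final edge of some path $P_{u,v} \in \mathcal{P}$.

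There is no main obstacle here, since nothing beyond a direct invocation is required; the only thing worth pointing out explicitly, for the reader, is why the quantified second hypothesis disappears, namely that $\mathsf{in}_D(w) \setminus \mathsf{in}_D(w) = \varnothing$ makes the universally quantified statement trivially true.
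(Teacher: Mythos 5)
Your proposal is correct and matches the paper's intent exactly: the corollary is stated there as the special case $I=\mathsf{in}_D(w)$ of the preceding lemma, with the second hypothesis becoming vacuous since $\mathsf{in}_D(w)\setminus I=\varnothing$. Nothing further is needed.
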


A digraph $ D $ has the $ G $\emph{-quasi-vertex-flame property} for some  $ G\subseteq D $ at $ v\in V-r $ if $ \mathcal{G}_{D}(v) $ contains every $ I\subseteq \mathsf{in}_D(v) $ for which $ I\setminus \mathsf{in}_{G}(v) $ is finite. 
(For a single $ v $ only the edges $ \mathsf{in}_G(v) $ are relevant for the $ G $-quasi-vertex-flame property at $ v $.)
We need a statement that closely resembles \cite{erde2020enlarging}*{Lemma 2.10}.
In fact, even though the statement of \cref{lem:G_quasi_flame} appears stronger on first glance since we drop the condition that $D$ should have the $G$-quasi-flame property at every vertex, \cref{lem:G_quasi_flame} is obtained with the proof in \cite{erde2020enlarging}*{Lemma 2.10}.
\begin{lem}[\cite{erde2020enlarging}*{Lemma 2.10}]\label{lem:G_quasi_flame}
    Assume that $D=(V,E)$ is a countable $ r $-rooted digraph, $ v\in V-r $ and $ G \subseteq D $. 
    Then there is an ${I^* \in \mathcal{G}_D(v)}$ such that~${D \upharpoonright_v I^*}$ has the $ G $-quasi-vertex-flame property for every $ u\in V-r $ for which $ D $ has this property.
\end{lem}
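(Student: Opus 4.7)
The plan is to take $I^*$ to be a maximal member of $\mathcal{G}_D(v)$, constructed by a greedy countable recursion (exploiting countability of $D$), and then to verify preservation of the $G$-quasi-vertex-flame property at the other vertices by a re-routing argument based on Pym's theorem. Concretely, enumerate $\mathsf{in}_D(v)\setminus\{rv\}=\{e_n:n<\omega\}$ and build a chain $\varnothing=I^{(0)}\subseteq I^{(1)}\subseteq\cdots$ in $\mathcal{G}_D(v)$ by setting $I^{(n+1)}:=I^{(n)}+e_n$ if this remains in $\mathcal{G}_D(v)$ and $I^{(n+1)}:=I^{(n)}$ otherwise. Extract a common witness path-system $\mathcal{P}^*$ for $I^*:=\bigcup_n I^{(n)}$ by a diagonal argument over the countably many witnesses $\mathcal{P}^{(n)}$; then $I^*\in\mathcal{G}_D(v)$ and $I^*$ is maximal, i.e.\ $I^*+e\notin\mathcal{G}_D(v)$ for every $e\in\mathsf{in}_D(v)\setminus I^*$.

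Set $D':=D\upharpoonright_v I^*$. Preservation of the $G$-quasi-vertex-flame property at $v$ itself is immediate from the fact that $\mathsf{in}_{D'}(v)\subseteq I^*+rv$ and subsets of $E^{+}(\mathcal{P}^*)+rv$ are witnessed by restrictions of $\mathcal{P}^*$ (possibly extended by the trivial $rv$-path). For $u\in V-r-v$, fix $J\subseteq\mathsf{in}_D(u)$ with $J\setminus\mathsf{in}_G(u)$ finite, and let $\mathcal{Q}$ be an internally disjoint witness of $J\in\mathcal{G}_D(u)$ in $D$. By internal disjointness, at most one path $Q_0\in\mathcal{Q}$ uses $v$ internally; if none does, or if $Q_0$ enters $v$ via an edge in $I^*$, then $\mathcal{Q}\subseteq D'$ and we are done. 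In the remaining case, $Q_0$ enters $v$ via some $e\notin I^*$, and we must replace $Q_0$ by an $r\rightarrow u$ path $Q_0'$ in $D'$ that is internally disjoint from $\mathcal{Q}\setminus\{Q_0\}$.

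To construct $Q_0'$, the strategy is to apply Corollary \ref{cor: Pym+} to $\mathcal{P}^*$ together with a single-path $v$-infan extracted from the initial segment of $Q_0$ (truncated after its last meeting with a suitable $S\in\mathfrak{S}_D(v)$ compatible with $\mathcal{P}^*$), and to use maximality of $I^*$ to force the Pym output to end at $v$ via an edge of $I^*$. The back half of $Q_0'$ is then the original terminal segment of $Q_0$ from $v$ to $u$. The main obstacle I foresee is ensuring that $Q_0'$ is internally disjoint from $\mathcal{Q}\setminus\{Q_0\}$: since those paths avoid $v$ and are internally disjoint from $Q_0$, the only issue is that the newly inserted $\mathcal{P}^*$-segment may share internal vertices with some $Q_i\in\mathcal{Q}\setminus\{Q_0\}$. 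A careful choice of the splicing vertex $v_R$ produced by Pym (perhaps after refining $\mathcal{P}^*$ via Corollary \ref{cor: EMpathsys covers I} to cover specific critical edges) should resolve the conflicts; carrying this out cleanly, together with the diagonal extraction of $\mathcal{P}^*$ in the first step, is where the technical weight of the proof resides.
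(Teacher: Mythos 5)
The first step of your plan has a fatal gap: $\mathcal{G}_D(v)$ is not closed under unions of increasing chains, so $I^*:=\bigcup_n I^{(n)}$ need not belong to $\mathcal{G}_D(v)$, and no diagonal extraction of a witness can repair this, because a witness may simply not exist. This is exactly the gap between the quasi-vertex-flame property and the vertex-flame property around which the whole theory is built. Concretely, take $V=\{r,v,a_0\}\cup\{a_n,b_n:n\geq 1\}$ with edges $rb_n$, $b_na_0$, $b_na_n$ for all $n\geq 1$, and $a_mv$ for all $m\geq 0$. Every finite subset of $\mathsf{in}_D(v)$ lies in $\mathcal{G}_D(v)$ (route $a_0$ through an unused $b_N$), so your greedy recursion accepts every edge and returns $I^*=\mathsf{in}_D(v)$; but $\mathsf{in}_D(v)\notin\mathcal{G}_D(v)$, since for $m\geq 1$ the only $r\rightarrow v$ path ending in $a_mv$ is $rb_ma_mv$, after which no path ending in $a_0v$ can avoid every $b_m$. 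For the same reason Zorn's lemma does not apply, so even the existence of a maximal element of $\mathcal{G}_D(v)$ would need an argument.

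The second half also rests on the wrong invariant. Maximality of $I^*$ is not what drives the preservation of the $G$-quasi-vertex-flame property at the other vertices; the relevant condition is that the largest Erd\H{o}s--Menger separation $T_{D,v}$ stays linked to $v$ in $D\upharpoonright_v I^*$ (cf.\ Lemma \ref{l: largest EMsep}), combined with a repair mechanism in the spirit of Corollary \ref{cor: G-quasi add one}: when deleting the ingoing edges of $v$ outside the current approximation destroys a requirement at some $u$, that statement produces a separator through $v$ linked from $r$ whose fan can be spliced (via Pym) into the current witness so as to restore the requirement while enlarging the approximation. Accordingly the proof must be organised as a recursion over the countably many requirements (pairs $(u,F)$ with $F\in[\mathsf{in}_D(u)\setminus\mathsf{in}_G(u)]^{<\omega}$), building \emph{nested} witnessing path-systems for the successive approximations so that membership of the limit set in $\mathcal{G}_D(v)$ holds by construction rather than by a limit argument. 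Your proposal replaces this with ``take a maximal element and verify afterwards'', and both halves of that replacement break down.
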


On the one hand, we are interested only in cases where $ G $ has a certain special form.
On the other hand we want to weaken the assumption that $ D $ is countable.
The following variant will be suitable for our purpose: 
\begin{cor}\label{cor: I G-qusi}
    Assume that $D=(V,E)$ is an $ r $-rooted digraph, ${v \in V-r}$ and $W \subseteq V-v-r $ is a countable set such that $ D $ has the vertex-flame property at every $ w\in W $.
    Then there is an ${I^* \in \mathcal{G}_D(v)}$ such that ${D \upharpoonright_v I^*}$ also has the vertex-flame property for every $ w\in W $.
\end{cor}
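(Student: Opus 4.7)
The plan is to reduce to the countable case (Lemma \ref{lem:G_quasi_flame}) via a countable elementary submodel argument. Fix a countable elementary submodel $M$ with $D, r, v, W \in M$; since $W$ is countable and belongs to $M$, in fact $W \subseteq M$. Let $D_M := D \cap M$ and for each $w \in W$ fix, by elementarity, a witness $\mathcal{P}_w \in M$ for $\mathsf{in}_D(w) \in \mathcal{G}_D(w)$.

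First I would verify that $D_M$ itself has the vertex-flame property at every $w \in W$. A finite path lies in $M$ if and only if all of its vertices and edges do, and the path of $\mathcal{P}_w$ ending at a given ingoing edge of $w$ is unique, hence definable from $\mathcal{P}_w$ and that edge. Consequently the subsystem $\mathcal{P}_w \cap M$ is an internally disjoint $r \rightarrow w$ path-system in $D_M$ covering $\mathsf{in}_D(w) \cap M = \mathsf{in}_{D_M}(w)$. Applying Lemma \ref{lem:G_quasi_flame} to the countable $D_M$ with the choice $G := D_M$ (so that for $u \neq v$ the resulting $D_M$-quasi-vertex-flame property at $u$ in $D_M \upharpoonright_v I^*$ collapses to the full vertex-flame property, since $I \setminus \mathsf{in}_{D_M}(u) = \varnothing$ for every $I \subseteq \mathsf{in}_{D_M}(u)$) then produces $I^* \in \mathcal{G}_{D_M}(v) \subseteq \mathcal{G}_D(v)$ together with, for each $w \in W$, an internally disjoint $r \rightarrow w$ path-system $\mathcal{Q}_w \subseteq D_M \upharpoonright_v I^*$ with $E^+(\mathcal{Q}_w) = \mathsf{in}_{D_M}(w)$.

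The main step, and the only delicate one, is lifting these ``inside $M$'' witnesses to witnesses in the full $D \upharpoonright_v I^*$. For each $w \in W$ I would take the combined path-system $\mathcal{Q}_w \cup (\mathcal{P}_w \setminus M)$, whose last edges are $\mathsf{in}_{D_M}(w) \cup (\mathsf{in}_D(w) \setminus M) = \mathsf{in}_D(w)$. The key observation ensuring internal disjointness, where elementarity does the real work, is that no internal vertex of any $P \in \mathcal{P}_w \setminus M$ can belong to $M$: if a vertex $u \in M$ were an internal vertex of such a $P$, the unique path in $\mathcal{P}_w$ passing through $u$ as an internal vertex (unique by internal disjointness) would be definable from $u$ and $\mathcal{P}_w$, hence force $P \in M$, a contradiction. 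This separates the internal vertices of $\mathcal{Q}_w \subseteq D_M$ (all in $M$) from those of $\mathcal{P}_w \setminus M$ (all outside $M$). The same definability argument applied to $v \in M$ forces the at-most-one path of $\mathcal{P}_w$ through $v$ to lie in $M$; hence paths in $\mathcal{P}_w \setminus M$ avoid $v$ altogether, use no ingoing edge of $v$, and therefore survive intact in $D \upharpoonright_v I^*$, completing the witness at every $w \in W$.
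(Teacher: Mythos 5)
Your proposal is correct and follows essentially the same route as the paper's proof: a countable elementary submodel $M$, an application of Lemma \ref{lem:G_quasi_flame} to $D\cap M$ (the paper passes $G\cap M$ for $G=(V,\bigcup_{w\in W}\mathsf{in}_D(w))$ rather than $G=D\cap M$, which makes no difference for the vertices in $W$), and the lifting of each witness via $\mathcal{Q}_w\cup(\mathcal{P}_w\setminus M)$ with the same definability argument showing that paths of $\mathcal{P}_w$ meeting $M$ internally lie in $M$. Your extra observations (that $D\cap M$ has the flame property at each $w\in W$, and that paths in $\mathcal{P}_w\setminus M$ avoid $v$ and hence survive the restriction $\upharpoonright_v I^*$) are correct details the paper leaves implicit.
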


\begin{proof}
    Clearly, $D$ has the $G$-quasi-vertex-flame property for each $ w\in W $ for $ G :=(V,\{ xw \in E(D) \colon w \in W \}) $ by assumption.
    Let $v$ be the vertex given in the corollary and let $M$ be a countable elementary submodel with $v, r, G, D, W \in M$. 
    Note that $ W\subseteq M $ because $ W $ is countable.
    First we show that $D \cap M $ has the $G\cap M$-quasi-vertex-flame property for each $ w\in W $.
    Let $w \in W$ be arbitrary and let $I \subseteq \entr{D \cap M} (w)$ such that $ I\setminus \entr{G \cap M} (w) $ is finite.
    As $D$ has the $G$-quasi-vertex-flame property at $ w $, it follows that $I^\prime := I \cup \entr{G} (w) \in \mcalG_D(w)$ and thus there is a path system $\mcalP_{I'} $ witnessing this.
    Since $ G, w \in M $ and also $I\setminus \entr{G \cap M} (w)\in M $ (because $ I\setminus \entr{G \cap M} (w) $ is a finite subset of $ M $), we know that $ I^\prime\in M $ and therefore $ \mcalP_{I'} $ can be 	chosen to be an element of $ M$.
    We claim that $\mcalP_{I'}\cap M $ witnesses $ I\in \mathcal{G}_{D\cap M}(w) $. 
    Indeed, for $ e\in I'\cap M=I $ the unique path $ P_e\in \mcalP_{I'} $ through $ e $ is definable from $ e $ and $ \mcalP_{I'} $, thus $ P_e\in M $.
    Since $ P_e $ is finite we obtain $ P_e\subseteq M $ and therefore $ P_e $ is a path of $ D\cap M $.
    Furthermore, if the last edge of some $ P\in \mathcal{P}_{I'} $ is not in $ M $ then $ P\notin M $.
    Thus $ \mathcal{P}_{I'}\cap M $ consists of those paths in $ \mathcal{P}_{I'} $ whose last edge is in $ I $ and all these paths lie completely in $ D\cap M $, hence $ I\in \mathcal{G}_{D\cap M}(w) $ holds.
    
    We may now apply \cref{lem:G_quasi_flame} to the countable digraph $D\cap M$ with $G \cap M$ and $v$.
    This yields a set $I^\ast \in \mcalG_{D\cap M}(v)$ such that $D\cap M \upharpoonright_{v} I^\ast$ has the $G \cap M$-quasi-vertex-flame property ate every $ w\in W $.
    We shall show that $D \upharpoonright_{v} I^\ast$ has the vertex-flame-property at every $w \in W$. 
    Let $ w\in W $ be fixed.
    Let us pick a $ \mathcal{P} \in M $ witnessing the vertex-flame property of $D\cap M \upharpoonright_{v} I^\ast$ at $ w $ and a $ \mathcal{Q} \in M $ showing the vertex-flame property of $D$ at $ w $.
    We claim that $ \mathcal{R}:=\mathcal{P}\cup (\mathcal{Q}\setminus M) $ witnesses the vertex-flame property of $D\upharpoonright_v I^{*}$ at $ w $.
    Indeed, for any path $Q \in\mathcal{Q}$ meeting a vertex $u \in (V-r-w) \cap M$ we have $Q\in M$ and therefore $ Q\subseteq D\cap M $, as $ Q $ is finite and definable in $M$ from $ w $ and $ \mathcal{Q} $.
    This shows that $ \mathcal{R} $ is an internally disjoint path-system.
    Furthermore, if $ e\in 	\mathsf{in}_{D}(w) $, then $ e $ is the last edge of a path in $ \mathcal{P} $ or in $ \mathcal{Q}\setminus M $ depending on if $ e\in M $.
    This finishes the proof of the corollary.
\end{proof}

\subsection{Preservation of largeness}
We introduce some terminology that we are going to use only locally to prove some lemmas applying previous results. 
For an $ X\subseteq V-r $, the \emph{entrance} of $ X $ with respect to $ D $ is 
\[
    \boldsymbol{\mathsf{ent}_D(X)}:= \{ v\in X \colon \exists uv\in E(D)\text{ with }u\notin X \},
\]
and $ \boldsymbol{\mathsf{int}_D(X)} $ stands for its \emph{interior} $ X\setminus \mathsf{ent}_D(X)$.
A set $ B\subseteq V-r $ is a $v $\emph{-bubble} with respect to $ D $ if there exists a $ v $-infan $ \mathcal{P}=\{ P_{u} \colon u\in \mathsf{ent}_D(B)-v \} $ in $ D[B] $ where $P_u $ starts at $ u $.
Let us denote the set of the $ v $-bubbles in $ D $ by $ \boldsymbol{\mathsf{bubb}_D(v)} $.
Clearly $ \{ v \}\in \mathsf{bubb}_D(v) $ since either the trivial path consisting of the single vertex $ v $ or the empty set is a witness for it depending on if $ v\in \mathsf{ent}_D(\{ v \}) $. 
\begin{lem}[Bubble uniting lemma, \cite{joo2019vertex}*{Lemma 3.5}]\label{l: bubble unite}
    Let $ \alpha $ be an ordinal number.
    Suppose that $ \left\langle B_\beta: \beta<\alpha \right\rangle $ is a sequence where $ B_\beta\in \mathsf{bubb}_D(v_\beta) $ for some $ v_{\beta}\in V-r $.
    Let us denote $ \bigcup_{\gamma<\beta}B_\gamma $ by $ \boldsymbol{B_{<\beta}} $.
    If for each $ \beta<\alpha $ either $ v_\beta=v_0 $ or $ v_\beta\in \mathsf{int}_D \left(B_{<\beta} \right) $, then $ B_{<\alpha}\in \mathsf{bubb}_D(v_0) $.
\end{lem}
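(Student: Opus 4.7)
The plan is to prove $B_{<\beta}\in\mathsf{bubb}_D(v_0)$ by transfinite induction on $\beta\le\alpha$, constructing along the way an explicit $v_0$-infan $\mathcal{R}_\beta$ in $D[B_{<\beta}]$ that witnesses the statement. The base case $\beta=0$ is handled by the empty infan, since $\mathsf{ent}_D(\varnothing)-v_0=\varnothing$.

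For the successor step $\beta=\gamma+1$, the plan is to combine $\mathcal{R}_\gamma$ with the $v_\gamma$-infan $\mathcal{P}_\gamma$ of $D[B_\gamma]$ via Corollary~\ref{cor: Pym}. If $v_\gamma=v_0$, both systems already terminate at $v_0$: after first shortening each $R\in\mathcal{R}_\gamma$ at its earliest meeting (if any) with $\mathsf{ent}_D(B_\gamma)-v_0$ to secure the hypothesis $V(\mathcal{Q})\cap S=V^-(\mathcal{Q})$, the corollary directly yields a $v_0$-infan in $D[B_{<\beta}]$ whose starts equal $\mathsf{ent}_D(B_{<\beta})-v_0$. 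If instead $v_\gamma\in\mathsf{int}_D(B_{<\gamma})$, then $v_\gamma$ has no $D$-in-edges from outside $B_{<\gamma}$, so any path $R\in\mathcal{R}_\gamma$ that enters $B_\gamma$ either does so through $\mathsf{ent}_D(B_\gamma)-v_\gamma$ or reaches $v_\gamma$ through an edge coming from $B_{<\gamma}\setminus B_\gamma$. One then applies Corollary~\ref{cor: Pym} to $\mathcal{P}_\gamma$ together with the $B_\gamma$-segments of paths in $\mathcal{R}_\gamma$ to produce a $v_\gamma$-infan absorbing both; each of its paths is prolonged from $v_\gamma$ to $v_0$ along the appropriate suffix supplied by $\mathcal{R}_\gamma$ to obtain $\mathcal{R}_\beta$, while any path of $\mathcal{R}_\gamma$ whose starting vertex becomes interior to $B_{<\beta}$ is discarded. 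The strong clause of Corollary~\ref{cor: Pym} — every output path is either preserved or has the form $Pv_RQ$ — is precisely what guarantees that internal disjointness survives the splicing.

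For a limit ordinal $\alpha$ the plan is simply $\mathcal{R}_\alpha:=\bigcup_{\beta<\alpha}\mathcal{R}_\beta$. For this to define a $v_0$-infan, one must maintain a monotonicity invariant throughout the recursion: once a path is assigned to an entrance $u$ at some stage $\gamma$, it is not altered afterwards. This can be arranged by applying Corollary~\ref{cor: Pym} at each successor step so that it only modifies paths of $\mathcal{R}_\gamma$ that actually meet the freshly added bubble $B_\gamma$. The principal obstacle is the interior successor case: simultaneously ensuring internal disjointness, the correct set of starts $\mathsf{ent}_D(B_{<\beta})-v_0$, and this monotonicity under the Pym-style re-routing is the technical heart of the argument, and is exactly what forces the hypothesis $v_\beta\in\mathsf{int}_D(B_{<\beta})$ rather than the weaker $v_\beta\in B_{<\beta}$.
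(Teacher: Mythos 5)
The paper does not prove this lemma itself (it is imported from \cite{joo2019vertex}*{Lemma 3.5}), so I can only judge your argument on its own terms. Your overall shape — transfinite recursion, splice at successors, union at limits — is the right one, but the two steps you yourself identify as the heart of the matter both contain genuine errors. The decisive one is the interior successor case: after producing a $v_\gamma$-infan you propose to ``prolong each of its paths from $v_\gamma$ to $v_0$''. No such prolongation exists in general: if $B_{<\gamma}$ is the directed path $s\to v_0\to v_\gamma$ with $\mathsf{ent}_D(B_{<\gamma})=\{s\}$, there is no $v_\gamma\to v_0$ path at all; and even when one exists, several paths of a $v_0$-infan may share only $v_0$, so they cannot all pass through the common vertex $v_\gamma\neq v_0$. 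The hypothesis $v_\gamma\in \mathsf{int}_D(B_{<\gamma})$ must be used in the opposite direction: it forces every path $Q_u$ of the $B_\gamma$-infan that starts outside $B_{<\gamma}$ to enter $B_{<\gamma}$ \emph{strictly before} reaching $v_\gamma$ (its last edge already comes from $B_{<\gamma}$), and its first vertex $b_u$ in $B_{<\gamma}$ lies in $\mathsf{ent}_D(B_{<\gamma})$. One truncates $Q_u$ at $b_u$ and continues along $R_{b_u}\in\mathcal{R}_\gamma$; the terminus $v_\gamma$ is never visited. Moreover $b_u$ cannot survive as an entrance of $B_{<\gamma+1}$: otherwise $b_u\in\mathsf{ent}_D(B_\gamma)-v_\gamma$ would be the first vertex of another path of the $B_\gamma$-infan while being an internal vertex of $Q_u$, contradicting that these paths share only $v_\gamma$. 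Consequently the concatenations $Q_ub_uR_{b_u}$ and the untouched old paths $R_s$ (for surviving $s\in\mathsf{ent}_D(B_{<\gamma})$) are already internally disjoint, and no Pym-type rerouting is needed. Your invocation of Corollary~\ref{cor: Pym} is in any case misapplied: its hypothesis $V(\mathcal{Q})\cap S=V^-(\mathcal{Q})$ is not restored by ``shortening'' (shortened paths are no longer a $v_0$-infan), and the corollary preserves the sources of only one of the two systems, whereas the entrance of the union draws from both.

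The limit step has a second genuine gap. The invariant ``once a path is assigned to an entrance it is never altered'' is not implied by ``only modify paths that meet the freshly added bubble'': a single path may meet cofinally many of the bubbles added below a limit ordinal of cofinality $\omega$ and would then be rerouted infinitely often, leaving no well-defined limit path. As stated, your recursion therefore does not survive limit stages. With the corrected successor step the invariant holds for free — each surviving old path is kept verbatim, paths are only ever added or discarded, and a discarded source is interior to all later unions — so at a limit one may take, for each $u\in\mathsf{ent}_D(B_{<\alpha})-v_0$, the eventually constant path assigned to $u$; any two of these coexist unchanged at some common earlier stage, which gives internal disjointness. I recommend reworking both steps along these lines.
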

Note that for an $ S\in \mathfrak{S}_D(v) $, the set $ \boldsymbol{B_{D,S,v}}$ of vertices that are separated from $ r $ by $ S $ in $ D-rv $ form a $ v $-bubble with $\mathsf{ent}_{D-rv}(B_{D,S,v})=S $ such that $ N^{-}_{D-rv}(v)\subseteq B_{D,S,v}$.

\begin{cor}\label{cor: bubb in-neigbh}
    There is a $ \subseteq $-largest $ v $-bubble $ \boldsymbol{B_{D,v}} $ in $ D $ for every $ v\in V-r $ and it contains $ N^{-}_{D-rv}(v) $.
\end{cor}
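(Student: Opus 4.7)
My plan is to derive both assertions essentially immediately from Lemma \ref{l: bubble unite} combined with the observation stated just above the corollary about $B_{D,S,v}$.

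For the existence of a $\subseteq$-largest $v$-bubble, I would first note that the collection of all $v$-bubbles in $D$ is a set, being a subcollection of $\mathcal{P}(V-r)$. I would well-order it arbitrarily as $\langle B_\beta : \beta < \alpha \rangle$ for some ordinal $\alpha$ and set $v_\beta := v$ for every $\beta < \alpha$. Then the hypothesis of the Bubble Uniting Lemma (``$v_\beta = v_0$ or $v_\beta \in \mathsf{int}_D(B_{<\beta})$'') holds trivially, the first disjunct being satisfied at every step. Hence $B_{<\alpha} = \bigcup_{\beta<\alpha} B_\beta$ is itself a $v$-bubble in $D$, and by construction it contains every $v$-bubble; I would name this set $B_{D,v}$.

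For the inclusion $N^{-}_{D-rv}(v) \subseteq B_{D,v}$, the cleanest route is to invoke the observation preceding the corollary: by the infinite Menger theorem \ref{t: inf Menger} applied inside $D-rv$ (see the discussion in Subsection \ref{subsection EM sep path}), $\mathfrak{S}_D(v) \neq \varnothing$, so any $S \in \mathfrak{S}_D(v)$ produces a $v$-bubble $B_{D,S,v}$ with $N^{-}_{D-rv}(v) \subseteq B_{D,S,v}$. Maximality of $B_{D,v}$ then yields $N^{-}_{D-rv}(v) \subseteq B_{D,S,v} \subseteq B_{D,v}$. Alternatively, one may give a one-line verification: for every $u \in N^{-}_{D-rv}(v)$ the set $\{u,v\}$ is itself a $v$-bubble, witnessed by the single-edge $v$-infan consisting of the edge $uv$ (which belongs to $D-rv$, so in particular $u \neq r$ and $uv \in E(D[\{u,v\}])$).

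No step here poses a real obstacle: the corollary is a purely formal consequence of the two ingredients mentioned. The only potential worry — that Lemma \ref{l: bubble unite} demands a nontrivial interlocking between the base-vertices of successive bubbles — evaporates because in our application all bubbles are already centred at the common vertex $v$, making the trivial branch of the hypothesis applicable throughout the transfinite recursion.
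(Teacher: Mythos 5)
Your argument is correct and is exactly the derivation the paper intends: the union of all $v$-bubbles is a $v$-bubble by Lemma~\ref{l: bubble unite} (all base vertices equal to $v$, so the trivial disjunct applies), and the containment of $N^{-}_{D-rv}(v)$ follows from the note preceding the corollary that $B_{D,S,v}$ is a $v$-bubble containing this set for any $S\in\mathfrak{S}_D(v)\neq\varnothing$. Your alternative one-line verification via the bubbles $\{u,v\}$ is also valid and even avoids invoking Theorem~\ref{t: inf Menger}.
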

\begin{lem}[\cite{joo2019vertex}*{Lemma 3.10}]\label{l: largest EMsep is real}
    A spanning subdigraph $ L $ of $ D $ is large if and only if $ u\in B_{L,v} $ for every $ uv\in E(D)\setminus E(L)$. 
    Furthermore, if $ L $ is large and $ v\in V-r $, then $ \mathsf{ent}_{L-rv}(B_{L,v})= \mathsf{ent}_{D-rv}(B_{L,v})\in \mathfrak{S}_D(v) $.
\end{lem}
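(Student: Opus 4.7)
The plan is to tackle the equivalence in the two directions and then read off the ``furthermore'' clause as a by-product of the constructive $\Leftarrow$ argument.

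For the $\Rightarrow$ direction, let $uv\in E(D)\setminus E(L)$. Largeness forces $\mathsf{out}_D(r)\subseteq L$, so $u\neq r$; picking any $S\in \mathfrak{S}_D(v)\cap \mathfrak{S}_L(v)$, the inclusion $L\subseteq D$ yields $B_{D,S,v}\subseteq B_{L,S,v}\subseteq B_{L,v}$ (the last by maximality of $B_{L,v}$), and then $u\in N^{-}_{D-rv}(v)\subseteq B_{D,S,v}$ closes it. This direction is essentially a short monotonicity argument.

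For $\Leftarrow$, the edge condition already implies $\mathsf{out}_D(r)\subseteq L$ (since $r\notin V-r$), so it suffices to exhibit, for each $v\in V-r$, an $S\in \mathfrak{S}_D(v)\cap \mathfrak{S}_L(v)$. I set $S := \mathsf{ent}_{L-rv}(B_{L,v})$ and use $N^{-}_{L-rv}(v)\subseteq B_{L,v}$ (Corollary \ref{cor: bubb in-neigbh}) to observe $v\notin S$ and $S=\mathsf{ent}_L(B_{L,v})-v$. The first step is to show that $S$ separates $r$ from $v$ in $D-rv$: for any such path $P$, take the first $w\in V(P)\cap B_{L,v}$ with predecessor $u\notin B_{L,v}$ and split on whether $uw\in E(L)$. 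If yes, then $w\in \mathsf{ent}_L(B_{L,v})$, and $N^{-}_{L-rv}(v)\subseteq B_{L,v}$ rules out $w=v$, so $w\in S$. If no, the edge condition gives $u\in B_{L,w}$ (which already forces $w\neq v$ since $u\notin B_{L,v}$); if $w\in \mathsf{ent}_L(B_{L,v})\setminus\{v\}=S$ we are done, while if $w\in \mathsf{int}_L(B_{L,v})$ the bubble uniting lemma (Lemma \ref{l: bubble unite}) applied to $\langle B_{L,v},B_{L,w}\rangle$ produces a $v$-bubble containing $u$, contradicting the maximality of $B_{L,v}$.

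The main obstacle is to upgrade this separator into an Erd\H{o}s-Menger witness, i.e.\ to verify $\mathfrak{P}_L(v,S)\neq\varnothing$; once done, $S\in \mathfrak{S}_D(v)$ is automatic because $S$ also separates in $D-rv$ and $\mathfrak{P}_L(v,S)\subseteq \mathfrak{P}_D(v,S)$. The $v$-bubble structure of $B_{L,v}$ supplies a $v$-infan $\mathcal{I}$ in $L[B_{L,v}]$ from $S$ onto $v$, so what remains is to build, in the subdigraph $(L-rv)-(B_{L,v}\setminus S)$, an $r$-fan reaching every vertex of $S$ and then concatenate with $\mathcal{I}$. My approach is to apply the infinite Menger theorem (Theorem \ref{t: inf Menger}) from $\{r\}$ to $S$ in that subdigraph, obtaining internally disjoint paths orthogonal to some $T\subseteq S$, and then use Pym-type rerouting (Corollary \ref{cor: Pym+}) together with the lattice-theoretic comparison of Erd\H{o}s-Menger separators from \cite{joo2019complete} to absorb the vertices of $S\setminus T$. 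Minimality of $S$ inside $L-rv$ — which follows from each $s\in S$ being a terminal of a path in $\mathcal{I}$ — is what forces $T$ and $S$ to coincide after the rerouting.

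Finally, the ``furthermore'' clause is immediate: once $L$ is large, the $\Rightarrow$ direction delivers the edge condition, the $\Leftarrow$ construction yields $\mathsf{ent}_{L-rv}(B_{L,v})\in \mathfrak{S}_D(v)$, and the equality $\mathsf{ent}_{L-rv}(B_{L,v})=\mathsf{ent}_{D-rv}(B_{L,v})$ is settled by the same case analysis applied to an edge $uw\in E(D-rv)$ from $u\notin B_{L,v}$ into $w\in B_{L,v}$: the inclusion $\subseteq$ is trivial, while if $uw\in E(D)\setminus E(L)$ then the edge condition combined with bubble uniting again forces $u\in B_{L,v}$, contradicting $u\notin B_{L,v}$, except in the case where $w\in \mathsf{ent}_L(B_{L,v})-v=\mathsf{ent}_{L-rv}(B_{L,v})$ already holds.
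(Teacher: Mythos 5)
Your forward direction, your verification that $S:=\mathsf{ent}_{L-rv}(B_{L,v})$ separates $r$ from $v$ in $D-rv$ (via the case split on $uw\in E(L)$ versus $uw\in E(D)\setminus E(L)$ and Lemma~\ref{l: bubble unite}), and your derivation of $\mathsf{ent}_{L-rv}(B_{L,v})=\mathsf{ent}_{D-rv}(B_{L,v})$ are all correct, and the overall architecture is the right one. The gap is exactly at the step you call ``the main obstacle'', namely verifying $\mathfrak{P}_L(v,S)\neq\varnothing$. Two things go wrong there. First, Theorem~\ref{t: inf Menger} applied between $N^{+}(r)$ and $S$ in $(L-rv)-(B_{L,v}\setminus S)$ does \emph{not} return a separator $T\subseteq S$: the orthogonal separator consists of one vertex chosen from each path, and these may be internal vertices strictly between $r$ and $S$ (already $X=\{x\}$, $Y=\{y_1,y_2\}$ with edges $x\to z$, $z\to y_1$, $z\to y_2$ gives $T=\{z\}\not\subseteq Y$). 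Second, the proposed repair --- ``Pym-type rerouting together with the lattice-theoretic comparison \dots to absorb the vertices of $S\setminus T$'' --- is not an argument: Corollary~\ref{cor: Pym+} merges two linkages that are already given, it cannot manufacture paths to vertices of $S$ reached by neither input system; and minimality of $S$ only yields that each $s\in S$ is \emph{individually} reachable from $r$ off $S-s$, which is precisely the kind of pointwise information that does not upgrade to a simultaneous disjoint linkage onto an infinite set.

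The missing idea is that the \emph{maximality of $B_{L,v}$}, not the minimality of $S$, is what pins the Aharoni--Berger separator into $S$. Concretely: concatenating the disjoint $N^{+}(r)\to S$ paths with the infan of $B_{L,v}$ exhibits $T$ as an Erd\H{o}s--Menger separator for $v$ in $L$, so $B_{L,T,v}$ is a $v$-bubble; since every $r\to v$ path in $L-rv$ must first cross $S$ while still inside the auxiliary digraph, it meets $T$ by then, so $B_{L,v}\subseteq B_{L,T,v}$; maximality then forces $B_{L,T,v}=B_{L,v}$, hence $T\subseteq \mathsf{ent}_{L-rv}(B_{L,v})=S$ (as $T$ avoids the interior). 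Only at that point does your minimality observation apply and give $T=S$, so that the Aharoni--Berger paths terminate on all of $S$ and the concatenation with the infan produces the required member of $\mathfrak{P}_L(v,S)$. As written, the proposal skips this bubble-maximality step entirely, and that step is the actual content of the hard direction.
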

Note that this also shows ${S_{L,v}} = \mathsf{ent}_{D-rv}(B_{L,v})$.

\begin{cor}\cite{joo2019vertex}*{Lemma 2.2}\label{cor: presLare}
    Assume that $\mathsf{out}_D(r)\subseteq  L\subseteq D $ such that for every $ v\in V-r $ with $ \mathsf{in}_L(v)\subsetneq \mathsf{in}_D(v) $ there is a $ \mathcal{P}\in \mathfrak{P}_D(v) $ that lies in $ L $.
    Then $ L $ is large.
\end{cor}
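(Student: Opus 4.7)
The plan is to apply Lemma~\ref{l: largest EMsep is real}, which reduces largeness of $L$ to the condition that every edge $uv\in E(D)\setminus E(L)$ satisfies $u\in B_{L,v}$. So I would fix an arbitrary $uv\in E(D)\setminus E(L)$ and aim to show $u\in B_{L,v}$.

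First I would observe that $u\neq r$: if $u=r$, then $uv=rv\in \mathsf{out}_D(r)\subseteq L$, contradicting $uv\notin E(L)$. In particular, $u\in N^{-}_{D-rv}(v)$. Since $uv$ witnesses that the inclusion $\mathsf{in}_L(v)\subsetneq \mathsf{in}_D(v)$ is strict, the hypothesis supplies some $\mathcal{P}\in \mathfrak{P}_D(v)$ lying in $L$; let $S\in\mathfrak{S}_D(v)$ be an Erd\H{o}s-Menger separation orthogonal to $\mathcal{P}$.

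The remaining work is bookkeeping using $L\subseteq D$. I would verify: \emph{(i)} $S$ still separates $r$ from $v$ in $L-rv$, because every $r\rightarrow v$ path in $L-rv$ is also a path in $D-rv$ and hence meets $S$; combined with $\mathcal{P}\subseteq L$ this gives $\mathcal{P}\in \mathfrak{P}_L(v,S)$ and therefore $S\in\mathfrak{S}_L(v)$. \emph{(ii)} Bubbles can only grow when passing from $D$ to $L$: applying the same argument to arbitrary $r\rightarrow w$ paths yields $B_{D,S,v}\subseteq B_{L,S,v}$, and the latter is contained in the $\subseteq$-largest $v$-bubble $B_{L,v}$ provided by Corollary~\ref{cor: bubb in-neigbh}. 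Using the fact recorded just before that corollary, namely $N^{-}_{D-rv}(v)\subseteq B_{D,S,v}$, I then chain
\[ u\in N^{-}_{D-rv}(v)\subseteq B_{D,S,v}\subseteq B_{L,S,v}\subseteq B_{L,v}, \]
and invoke Lemma~\ref{l: largest EMsep is real} to conclude that $L$ is large.

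There is no serious obstacle here: once one spots that Lemma~\ref{l: largest EMsep is real} reduces the problem to a vertex-in-bubble check, that the hypothesis $\mathsf{out}_D(r)\subseteq L$ rules out the corner case $u=r$, and that the hypothesis supplies an Erd\H{o}s-Menger witness precisely at those vertices $v$ where the check could fail, the transfer of the separator $S$ and of the associated bubble from $D$ to $L$ is immediate from $L\subseteq D$.
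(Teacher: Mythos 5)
Your argument is correct, and it is the natural derivation from the machinery the paper sets up: Lemma~\ref{l: largest EMsep is real} reduces largeness to the check $u\in B_{L,v}$ only at vertices $v$ that have actually lost an ingoing edge, which is precisely where the hypothesis supplies a $\mathcal{P}\in\mathfrak{P}_D(v,S)$ inside $L$, and your chain $u\in N^{-}_{D-rv}(v)\subseteq B_{D,S,v}\subseteq B_{L,S,v}\subseteq B_{L,v}$ goes through because passing to the subdigraph $L$ can only enlarge the set of vertices separated by $S$, while $S\in\mathfrak{S}_L(v)$ (which you verify) makes $B_{L,S,v}$ a genuine $v$-bubble of $L$. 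The paper itself only cites this statement from \cite{joo2019vertex}*{Lemma 2.2} without reproving it, so there is no in-paper proof to compare against; your write-up is a valid and essentially canonical proof given the surrounding lemmas.
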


We call an $ A\subseteq V-r $  \emph{anti-bubble} in $ D $ if $ \mathsf{ent}_{D}(A) $ is linked from $ r $ in $ D $.
Note that the family of anti-bubbles are closed under arbitrarily large intersection.
For $ S\in \mathfrak{S}_D(v) $ the set $ B_{D,S,v} $ is not just a $ v $-bubble but also an anti-bubble that contains $ \{ v \}\cup N_{D-rv}^{-}(v) $.
Moreover, if $ X $ is a $ v $-bubble and also an anti-bubble and contains $ \{ v \}\cup N_{D-rv}^{-}(v) $, then $ \mathsf{ent}_{D-rv}(X)\in \mathfrak{S}_D(v) $.
Let $\boldsymbol{A_{D,v}} $ be the intersection of all anti-bubbles in $ D $ containing $ \{ v \}\cup N_{D-rv}^{-}(v) $.
\begin{prop}\label{prop: A D v is a v-bubble}
    For every $ v\in V-r $, $ A_{D,v} $ is a $ v $-bubble in $ D $, furthermore, we have $ {T_{D,v}}=\mathsf{ent}_{D-rv}(A_{D,v})\in \mathfrak{S}_{D}(v) $.
\end{prop}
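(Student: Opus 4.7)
The plan is as follows. Since anti-bubbles are closed under arbitrary intersection, $A_{D,v}$ is itself an anti-bubble containing $\{v\}\cup N^{-}_{D-rv}(v)$, in fact the inclusion-minimal one with this property. By the remark preceding the proposition, once $A_{D,v}$ has been shown to be a $v$-bubble as well, the membership $\mathsf{ent}_{D-rv}(A_{D,v})\in\mathfrak{S}_{D}(v)$ follows immediately, and agreement with the previously defined maximum $T_{D,v}$ of the lattice follows from confronting $A_{D,v}$ with the $v$-bubble-and-anti-bubble $B_{D,T_{D,v},v}$ (which contains $\{v\}\cup N^{-}_{D-rv}(v)$ and therefore contains $A_{D,v}$). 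Thus the essential task is to prove that $A_{D,v}$ is a $v$-bubble.

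I would proceed by contradiction: assume no $v$-infan in $D[A_{D,v}]$ links $\mathsf{ent}_{D}(A_{D,v})-v$ to $v$. Apply the infinite Menger theorem (Theorem~\ref{t: inf Menger}) to the subdigraph $D[A_{D,v}]$ with $X:=\mathsf{ent}_{D}(A_{D,v})-v$ and $Y:=\{v\}$ to obtain an internally disjoint path-system $\mathcal{Q}$ together with an orthogonal separator $T\subseteq A_{D,v}$. Under the contradiction hypothesis, some $w\in X\setminus V^{-}(\mathcal{Q})$ must remain uncovered.

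The goal is now to promote $(\mathcal{Q},T)$ into a witness that $T$ is an Erd\H{o}s--Menger separation strictly above $T_{D,v}$ in the lattice $\mathfrak{S}_{D}(v)$, which contradicts its maximality. Concretely, I would use the $r$-fan $\mathcal{F}$ linking $r$ to $\mathsf{ent}_{D}(A_{D,v})$ (available since $A_{D,v}$ is an anti-bubble) together with $\mathcal{Q}$ and invoke the Pym-type Corollary~\ref{cor: Pym+} to obtain an internally disjoint $r\rightarrow v$ path-system in $D$ still orthogonal to $T$, certifying $T\in \mathfrak{S}_{D}(v)$. The inclusion chain $T\subseteq A_{D,v}\subseteq B_{D,T_{D,v},v}$ already places $T$ weakly above $T_{D,v}$, and strict inequality is forced by the uncovered vertex $w$, which witnesses that $A_{D,v}$ (hence the cut $T$) separates the digraph more finely than $B_{D,T_{D,v},v}$ does.

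The step I expect to be hardest is this Pym application, because it must simultaneously preserve (i) internal disjointness of the resulting $r\rightarrow v$ paths and (ii) the orthogonality of each such path with $T$. The first is directly afforded by Corollary~\ref{cor: Pym+}; the second requires arranging the setup so that the Pym rerouting touches only the ``$r$-side'' of the concatenation points, leaving the unique internal intersection with $T\subseteq A_{D,v}$ intact. Executing this carefully, together with verifying the strict-above comparison using $w$, is the technical core of the argument.
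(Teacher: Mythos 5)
Your reduction to showing that $A_{D,v}$ is a $v$-bubble, and your first move of applying the infinite Menger theorem inside $D[A_{D,v}]$, both match the paper. The gap is in how you close the argument. Writing $T$ for your Menger separator, the chain $T\subseteq A_{D,v}\subseteq B_{D,T_{D,v},v}$ gives only the weak inequality $T_{D,v}\leq T$, and once $T\in\mathfrak{S}_D(v)$ is certified, maximality yields $T=T_{D,v}$ --- which is not a contradiction. The claim that the uncovered vertex $w$ ``forces strict inequality'' is precisely the missing step, and nothing you have established supplies it: $w\in \mathsf{ent}_D(A_{D,v})\setminus V(\mathcal{Q})$ is perfectly compatible with $T=T_{D,v}$, since $w$ may simply lie in the interior of $B_{D,T,v}$ (its in-neighbour outside $A_{D,v}$ can itself be separated from $r$ by $T$, and a minimal Erd\H{o}s--Menger separator is allowed to miss entrance vertices of $A_{D,v}$). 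Lattice maximality cannot distinguish ``$T$ equals the entrance of $A_{D,v}$'' from ``$T$ sits inside $A_{D,v}$ and leaves part of the entrance uncovered'', so it is the wrong lever here.

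What actually closes the argument --- and what the paper uses --- is the \emph{minimality} of $A_{D,v}$, which you state in your first paragraph but never deploy. The paper applies Menger in $D[A_{D,v}]$ between $T_{D,v}:=\mathsf{ent}_{D-rv}(A_{D,v})$ and $N^{-}_D(v)$; if the resulting separation $S$ differs from $T_{D,v}$, then the corresponding set $B_{D,S,v}$ is an anti-bubble containing $\{v\}\cup N^{-}_{D-rv}(v)$ that fails to contain all of $A_{D,v}$, contradicting that $A_{D,v}$ is the intersection of all such anti-bubbles; if $S=T_{D,v}$, orthogonality forces every path to start at its chosen separator vertex and every entrance vertex to be covered, so the path-system is exactly the infan witnessing the bubble property. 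To repair your proof you would replace the appeal to maximality by this: from your $T$ and the uncovered $w$, the set of vertices of $A_{D,v}$ separated from $\mathsf{ent}_D(A_{D,v})$ by $T$ inside $D[A_{D,v}]$ is an anti-bubble containing $\{v\}\cup N^{-}_{D-rv}(v)$ but not $w$, hence strictly smaller than $A_{D,v}$ --- the desired contradiction. Note also that for this route you do not need the full orthogonal $r\rightarrow v$ path-system through $T$ (the Pym step you single out as hardest); you only need the entrance of that smaller anti-bubble to be linked from $r$, which is a lighter fan-concatenation argument.
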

\begin{proof}
    We apply \cref{t: inf Menger} (Aharoni-Berger) in $ D[A_{D,v}] $ with $ \mathsf{ent}_{D-rv}(A_{D,v}) $ and $ N^{-}_{D}(v) $.
    If the resulting separation $ S $ is not $ \mathsf{ent}_{D-rv}(A_{D,v}) $ itself, then $ B_{D,S,v}\subsetneq A_{D,v} $ is an anti-bubble containing $ \{ v \}\cup N_{D-rv}^{-}(v) $, which contradicts the minimality of $ A_{D,v} $.
    Thus $ S=\mathsf{ent}_{D-rv}(A_{D,v}) $, and hence the path-system given by \cref{t: inf Menger} witnesses that $A_{D,v} $ is a $ v $-bubble.
    Since $A_{D,v} $ is an anti-bubble as well and contains $ \{ v \}\cup N_{D-rv}^{-}(v) $, we have $\mathsf{ent}_{D-rv}(A_{D,v})\in \mathfrak{S}_{D}(v) $.
    It now follows from the definition that $\mathsf{ent}_{D-rv}(A_{D,v}) = {T_{D,v}}$
\end{proof}

Before we proceed we need another \namecref{l: aug walk}.
One of the standard proofs of Menger's theorem is based on the so called Augmenting walk lemma.
For a given disjoint system $ \mathcal{P} $ of $ X\rightarrow Y $ paths it either provides a bigger such system or an $X$-$Y$-separation consisting of exactly one vertex from each of the paths in $ \mathcal{P} $.
The infinite generalisation of this lemma (see Lemmas 3.3.2 and 3.3.3 in \cite{MR3822066}) was an important tool in the proof of the infinite version of \cref{t: inf Menger}.
There are several variants of the \namecref{l: aug walk} depending on whether the paths are edge-disjoint or vertex-disjoint, whether we consider graphs or digraphs etc.~but the proofs of these variants are essentially the same.
We make use of the following variant:
\begin{lem}[Augmenting walk]\label{l: aug walk}
    Assume that $ D=(V,E) $ is a digraph, $ X\subseteq V $ and $ v\in V\setminus X $.
    Let $ \mathcal{P}$ be a $ v $-infan with $ V(\mathcal{P})\cap X= V^{-}(\mathcal{P}) $.
    Then there is either an $ S $ that separates $ v $ from $ X $ consisting of a unique $ v_P\in V(P)-v $ for every $P\in \mathcal{P} $ or there is a $ v $-infan $ \mathcal{Q} $ with $ V(\mathcal{Q})\cap X= V^{-}(\mathcal{Q}) $ such that $ \left|\mathcal{P}\setminus \mathcal{Q}\right|+1= \left|\mathcal{Q}\setminus \mathcal{P}\right|<\aleph_0$ and $ V^{-}(\mathcal{Q})\supseteq V^{-}(\mathcal{P}) $.
 \end{lem}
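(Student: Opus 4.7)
My plan is to reduce the lemma to the classical edge-disjoint augmenting-path dichotomy via a vertex-splitting auxiliary digraph. Let $D^{\ast}$ be obtained from $D$ by splitting every vertex $u\neq v$ into two copies $u^{-},u^{+}$ joined by the directed edge $u^{-}u^{+}$, replacing each original edge $uw$ with $u^{+}w^{-}$ (so in particular $uv$ becomes $u^{+}v$), and adjoining a super-source $s$ with unit-capacity edges $s\to x^{-}$ for each $x\in X$. Under this construction, by the hypothesis $V(\mathcal{P})\cap X=V^{-}(\mathcal{P})$, the infan $\mathcal{P}$ corresponds to a unit flow of value $|V^{-}(\mathcal{P})|$ from $s$ to $v$ in $D^{\ast}$ which saturates exactly the edges $\{s\to x^{-}:x\in V^{-}(\mathcal{P})\}$ together with the relevant split and real edges. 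Form the residual digraph $D^{\ast}_{r}$ by reversing every saturated edge and let $R\subseteq V(D^{\ast})$ be the set of vertices reachable from $s$ in $D^{\ast}_{r}$.

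If $v\in R$, pick a shortest residual $s$-to-$v$ walk $W$: by shortness, $W$ is simple and meets only finitely many paths of $\mathcal{P}$. The edge symmetric difference of $W$ with the flow produces a new edge-disjoint family of $|V^{-}(\mathcal{P})|+1$ many $s$-$v$ paths in $D^{\ast}$, which pulls back to a $v$-infan $\mathcal{Q}$ in $D$: the vertex-splitting translates edge-disjointness in $D^{\ast}$ into internal vertex-disjointness in $D$; we have $V^{-}(\mathcal{Q})\supseteq V^{-}(\mathcal{P})$ since only a single new source is introduced; and the finite-change bound $|\mathcal{P}\setminus\mathcal{Q}|+1=|\mathcal{Q}\setminus\mathcal{P}|<\aleph_{0}$ is immediate. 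The condition $V(\mathcal{Q})\cap X=V^{-}(\mathcal{Q})$ is then maintained either by restricting the residual search to forbid visits to $y^{-}$ for $y\in X$ after the initial step, or by shortening the paths in $\mathcal{Q}$ to start at any $X$-vertex they would otherwise meet internally.

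If $v\notin R$, every edge from $R$ to its complement in $D^{\ast}$ (together with the relevant source edges from $s$) is saturated and hence used by some path of the flow. For each $P\in\mathcal{P}$ with start $x$, lift $P$ to the walk $s\to x^{-}\to x^{+}\to\cdots\to v$ in the auxiliary digraph and let $e_{P}$ be its first edge exiting $R$; then set $v_{P}\in V(P)-v$ to be $x$ when $e_{P}\in\{s\to x^{-},\, x^{-}\to x^{+}\}$, to be $u$ when $e_{P}$ is a split edge $u^{-}\to u^{+}$ or a real edge $u^{+}\to v$, and analogously for the remaining real-edge case $u^{+}\to w^{-}$ (taking the endpoint whose $D$-projection lies outside $R$). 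Distinctness of the $v_{P}$'s follows from vertex-disjointness of the paths of $\mathcal{P}$ away from $v$, and any $X\to v$ path in $D$ lifts to an $s$-to-$v$ walk in the auxiliary digraph whose cut-crossing edge anchors the corresponding $v_{P}\in S$ on the original path; hence $S=\{v_{P}:P\in\mathcal{P}\}$ separates $v$ from $X$. The main technical obstacle is the preservation of the condition $V(\mathcal{Q})\cap X=V^{-}(\mathcal{Q})$ in the augmentation case; this is the delicate point and would require careful management of either the residual search or the flow decomposition, whereas the other ingredients (shortness giving simplicity, symmetric difference giving a flow, reachable set giving a separator) proceed along standard lines as in existing proofs of the infinite Menger theorem.
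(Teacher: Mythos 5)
The paper never proves this lemma itself: it is stated as one of several interchangeable variants of the augmenting walk lemma, with a pointer to Lemmas 3.3.2 and 3.3.3 of the cited book of Diestel and the remark that all variants have essentially the same proof. So there is no in-paper argument to compare against line by line. Your reduction --- splitting each vertex $u\neq v$ into $u^-u^+$, adding a super-source over $X$, and running the residual-reachability dichotomy --- is a legitimate flow-theoretic rephrasing of that alternating-walk argument. The negative branch is essentially complete: once a saturated lifted path leaves the reachable set $R$ it cannot return (the reversed edge of a saturated re-entry edge would put its tail back into $R$), so each lifted path of $\mathcal{P}$ crosses the cut exactly once, giving one well-defined $v_P$ per path, and distinctness follows from the infan property.

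The gap is the one you name yourself, and of the two fixes you offer only one survives. \emph{Shortening} a new path to start at an internal $X$-vertex destroys the requirement $V^{-}(\mathcal{Q})\supseteq V^{-}(\mathcal{P})$: the discarded initial segment may begin at a vertex of $V^{-}(\mathcal{P})$ at which no other path of $\mathcal{Q}$ starts, so this option is not available. The \emph{restricted residual search} (forbidding visits to $y^{-}$ for $y\in X$ after the initial step) does work, but it must be checked on both sides of the dichotomy, which you do not do. On the separator side: it suffices to block $X\rightarrow v$ paths meeting $X$ only in their first vertex; the lift of such a path visits no forbidden vertex after its initial step, so every edge of the lift leaving the restricted reachable set is still saturated and the cut argument goes through unchanged. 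On the augmentation side: the old infan enters a vertex $y^{-}$ with $y\in X$ only via the source edge $s\to y^{-}$ (otherwise some $P\in\mathcal{P}$ would meet $X$ internally), and the restricted augmenting path does the same, so after taking the symmetric difference every path of the new flow still reaches $X$ only immediately after $s$; this yields $V(\mathcal{Q})\cap X=V^{-}(\mathcal{Q})$ automatically, together with $V^{-}(\mathcal{Q})=V^{-}(\mathcal{P})+x$ for the new source vertex $x$. With this branch of your ``either/or'' actually carried out, the proof is complete; as written, it is not.
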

    We say that the augmentation is \emph{successful} if the second case occurs and we say that it is \emph{unsuccessful} otherwise.
\begin{lem}\label{l: largest EMsep}
    Assume that $ I\subseteq \mathsf{in}_D(v) $ such that $ T_{D,v} $ remains linked to $ v $ in $ D':=D \upharpoonright_v I $. 
    Then for every $ u\in V-r$ every $ S\in \mathfrak{S}_{D}(u)$ remains linked to $ u $ in $ D' $. 
\end{lem}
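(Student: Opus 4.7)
Plan: Split into cases $u\neq v$ and $u=v$. In both I begin with a $u$-infan $\mathcal{R}$ in $D$ with $V^-(\mathcal{R})=S$, supplied by some $\mathcal{P}\in\mathfrak{P}_D(u,S)$, and modify it via Pym-style splicings using the given $v$-infan $\mathcal{T}$ in $D'$ linking $T_{D,v}$ to $v$.

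For $u\neq v$, internal disjointness of $\mathcal{R}$ together with $v\neq u$ forces at most one path $R^*\in\mathcal{R}$ to contain $v$. If $R^*$ already lies in $D'$ (e.g.\ if it does not exist, if $v\in S$ is the start of $R^*$, or if $R^*$'s unique in-edge at $v$ lies in $I+rv$), then $\mathcal{R}\subseteq D'$ and we are done. Otherwise split $R^*=s^*R^*v\cdot vR^*u$; the terminal $vR^*u$ lies in $D'$ because it uses only out-edges of $v$, while the initial segment $s^*R^*v$ crosses $T_{D,v}$ (since $T_{D,v}$ separates $r$ from $v$ in $D-rv$). Its portion up to the first crossing $t^*$ lies in $D-v\subseteq D'$; splicing with $\mathcal{T}_{t^*}$ via Corollary~\ref{cor: Pym}, with $\mathcal{R}\setminus\{R^*\}$ and $vR^*u$ supplying the disjointness constraints, produces the required $s^*\to v$ path in $D'$, which concatenates with $vR^*u$ to replace $R^*$.

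For $u=v$, several paths may need rerouting simultaneously. Form the $v$-infan $\mathcal{P}^+:=\{sPv:P\in\mathcal{P}\}$ for some $\mathcal{P}\in\mathfrak{P}_D(v,S)$. Since $S\leq T_{D,v}$ in the lattice, each $sPv$ crosses $T_{D,v}$; cutting at the first crossing yields a disjoint $S\to T_{D,v}$ path-system $\mathcal{P}^{-T}$ in $D-v\subseteq D'$. Applying Corollary~\ref{cor: Pym+} to $\mathcal{P}^+$ and a modification $\mathcal{Q}$ of $\mathcal{T}$---its terminal tails from the last $S$-intersection, chosen so that $V(\mathcal{Q})\cap S=V^-(\mathcal{Q})$---produces a system of $S\to v$ paths whose last edges lie in $E^+(\mathcal{T})\subseteq I+rv$; combined with $\mathcal{P}^{-T}$ to bridge from $S$ to $T_{D,v}$, this gives the desired $v$-infan in $D'$ from $S$. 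The main obstacle, most acute in this case, is to force the Pym output to lie entirely in $D'$ and not merely in $D$, since a naive application may retain unchanged paths of $\mathcal{P}^+$ that use forbidden in-edges of $v$. The remedy is to arrange the inputs so that every output path ends via a $\mathcal{T}$-edge; this may require iteration or an augmenting-walk argument (Lemma~\ref{l: aug walk}) to resolve residual bad paths, with care that in the infinite setting the cardinality equality $|S|=|T_{D,v}|$ alone does not yield equality of edge sets under inclusion.
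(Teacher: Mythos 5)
There is a genuine gap at the heart of your $u\neq v$ case, and it is exactly the difficulty the paper's proof is built to overcome. You correctly note that at most one path $R^*$ of the infan contains $v$ and hence uses a deleted edge, and you propose to replace only its initial segment $s^*R^*v$ by a detour through the given linkage $\mathcal{T}$ of $T_{D,v}$ to $v$ in $D'$. But that detour must be internally disjoint from the (possibly infinitely many) other paths of $\mathcal{R}$, which may run through $A_{D,v}$ and meet every path of $\mathcal{T}$. Corollary \ref{cor: Pym} merges two path-systems so as to link the starts of one to the ends of the other; it has no mechanism for avoiding a third system, so it cannot ``supply the disjointness constraints'' as you assert. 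Worse, a single $s^*\to v$ path in $D'$ avoiding $\mathcal{R}\setminus\{R^*\}$ need not exist at all (the other paths may cover all of $N^-_{D'}(v)$), so any correct argument must be prepared to reroute other paths of $\mathcal{R}$ as well while keeping their first vertices on $S$. You acknowledge this (``iteration or an augmenting-walk argument \dots to resolve residual bad paths'') but never supply the argument, and unbounded iteration is precisely what one cannot afford here. A smaller error: $s^*R^*v$ meets $T_{D,v}$ not because ``$T_{D,v}$ separates $r$ from $v$'' (the path starts at $s^*$, not at $r$), but because $A_{D,v}\subseteq B_{D,S,u}$ forces it to enter $A_{D,v}$ through its entrance.

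For comparison, the paper closes exactly this hole with the bubble machinery. From $v\in\mathsf{int}_D(B_{D,S,u})$ it deduces $A_{D,v}\subseteq B_{D,S,u}$; it then applies the augmenting walk Lemma \ref{l: aug walk} in $D'$ to $\mathcal{P}-P_e$, which either succeeds outright or returns a separator $S'$ orthogonal to $\mathcal{P}-P_e$. In the latter case $v\in\mathsf{int}_{D'}(B_{D',S',u})$, and Lemma \ref{l: bubble unite} unites $B_{D',S',u}$ with $A_{D,v}$ (a $v$-bubble of $D'$ by hypothesis) into one $u$-bubble $B$ of $D'$ containing every deleted edge; the initial segments of \emph{all} paths of $\mathcal{P}$ up to their first vertex in $B$ lie in $D'$ and are extended forward through $B$ simultaneously. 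You would need to import this augmenting-walk/bubble argument (or an equivalent global linkage step) to make your splice legitimate. Note finally that the paper treats only $u\neq v$, which suffices for its application; your $u=v$ sketch is, by your own admission, left unresolved at its main obstacle.
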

\begin{proof}
    Let $ u\in V-r-v $ and $S \in \mathfrak{S}(u)$ be given and let $ \mathcal{P} $ be a path-system that links $ S $ to $ u $ in $ D $.
    We may assume that there is some $ e\in E(D)\setminus E(D') $ such that there is a $ P_e\in \mathcal{P} $ through $ e $ since otherwise $ \mathcal{P} $ is a path-system in $ D' $ as well and we are done.
    
    We apply \cref{l: aug walk} in $ D' $ with $ S, u $ and $ \mathcal{P}-P_e $.
    If the augmentation is successful, the resulting path system witnesses that $S$ is linked to $u$ in $D'$ and we are done.
    Therefore suppose that the augmentation is unsuccessful.
    Then we can choose a unique $ v_P\in V(P)-u $ from each $ P\in \mathcal{P}-P_e $ such that the resulting $ S' $ separates $ u $ from $S $ in $ D' $.
    Then $ B_{D',S',u}$ is a $ u $-bubble in $ D' $ by definition.

    Next, we want to show that $ B:= A_{D,v} \cup B_{D',S',u} \in \mathsf{bubb}_{D'}(u)$ via \cref{l: bubble unite}.
    In order to do this we need to show $A_{D,v} \in \mathsf{bubb}_{D'}(v)$ and $v \in \mathsf{int}_{D'}(B_{D',S',u})$.
    For $A_{D,v} \in \mathsf{bubb}_{D'}(v)$, recall that $T_{D,v}$ remains linked to $v$ in $D'$ by assumption.
    Since we only deleted incoming edges of $v$ and $A_{D,v} \supseteq N_{D-rv}^{-}(v)$ by definition, $A_{D,v}$ is a $v$-bubble in $D'$.
    For $v \in \mathsf{int}_{D'}(B_{D',S',u})$ note that the terminal segment $vP_{e}u$ of $ P_e $ must lie in $ B_{D',S',u} $ since otherwise $ S' $ would not separate $ u $ from $S $ in $ D' $.
    This implies $ v\in B_{D',S',u} $ in particular.
    Since $ v\notin S' $ by construction but $\mathsf{ent}_{D'}(B_{D',S',u}) = S'$ by definition, we can conclude $ v\in \mathsf{int}_{D'}(B_{D',S',u}) $.
    Thus we really may apply \cref{l: aug walk} to $A_{D,v}$ and $B_{D',S',u}$ and obtain $ B \in \mathsf{bubb}_{D'}(u) $.

    For the construction of a path system witnessing that $S$ is linked to $u$ in $D'$, we show next that $B \subseteq B_{D,S,u}$ by showing this for $A_{D,v}$ and $B_{D',S',u}$ separately.
    For $A_{D,v} \subseteq B_{D,S,u}$, note that $ v $ is the head of $ e $, as we only deleted incoming edges at $v$.
    Since $u \neq v$, $ V^{-}(\mathcal{P})=\mathsf{ent}_{D}(B_{D,S, u})-u $ and the paths in $ \mathcal{P} $ are pairwise disjoint, this implies $ v\in \mathsf{int}_{D}(B_{D, S,u})$, which in turn implies $\braces{v}\cup N_{D-rv}^{-}(v) \subseteq B_{D,S,u}$.
    Furthermore, $S \in \mathfrak{S}_{D}(u)$ implies that $\mathsf{ent}_{D}(B_{D,S,u})$ is linked from $r$ in $D$.
    Thus $B_{D,S,u}$ is an anti-bubble in $D$ that contains $\braces{v} \cup N_{D-rv}^{-}(v)$ and as $A_{D,v}$ is the smallest such anti-bubble by definition, we have $A_{D,v} \subseteq B_{D,S,u}$ as desired.
    To see $ B_{D',S',u} \subseteq B_{D,S,u} $, first note that $S' \subseteq B_{D,S,u}$\,:
    as $B_{D,S,u}$ is the set of vertices that are separated from $r$ by $S$ in $D$, no path from $\mcalP$ can contain a vertex outside of $B_{D,S,u}$, thus as $S' \subseteq V ( \bigcup \mcalP)$ by construction, $S'$ in fact is a subset of $B_{D,S,u}$.
    Now suppose for a contradiction that there is a vertex $w \in B_{D',S',u} \setminus B_{D,S,u}$.
    As $w \not \in B_{D,S,u}$, there is an $r$--$w$ path $Q$ in $D$ that avoids $S$ by definition of $B_{D,S,u}$.
    Since $w \in B_{D',S',u}$, there is no such path in $D'-S'$, which means that $Q$ either meets a vertex of $S'$ or an edge $e \in E(D) \setminus E(D')$.
    In the former case the fact that $S' \subseteq B_{D,S,u}$ together with $S = \mathsf{ent}_{D}(B_{D,S,u})$ which is true by definition, it must meet a vertex of $S$, a contradiction.
    In the latter case $v$ is an internal vertex of $Q$ and as $v \in \mathsf{int}_{D}(B_{D,S,u})$, it cannot be disjoint from $S$, again a contradiction.
    
    Since $\braces{v} \cup N_{D-rv}^{-}(v) \subseteq A_{D,v}$ by definition, every edge in $ E(D)\setminus E(D') $ is spanned by $ A_{D,v} $.
    Thus we can build a $u$-infan in $D'$ that starts in $S$ as follows:
    take the initial segments of the paths in $ \mathcal{P} $ until the first vertex in $ B $ and extend them forward from $B$ to $u$ by using the fact that $ B\in \mathsf{bubb}_{D'}(u) $.
    The resulting path system links $S$ to $u$ in $D'$, as desired.
    \end{proof}

\begin{lem}\label{l:  no collapse of smallest EMsep}
    Assume that $ L $ is large and $ \mathcal{Q}_v\in \mathfrak{P}_{L}(v,S_{L,v} ) $ for some $ v\in V-r $.
    Then $ L':=L\upharpoonright_v E^{+}(\mathcal{Q}_v )$ is large, moreover, $S_{L',u}=S_{L,u} $ for every $ u\in V-r $. 
\end{lem}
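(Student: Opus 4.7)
The plan is to apply Lemma \ref{l: largest EMsep} with $L$ and $L'$ playing the roles of $D$ and $D'$. Its hypothesis is that $T_{L,v}$ remains linked to $v$ in $L'$; once this is established, the lemma supplies, for every $u\in V-r$, a $v$-infan in $L'$ starting at $S_{L,u}$. Extending these backwards from $S_{L,u}$ to $r$ (possible in $L'$ because initial segments that pass through $v$ can be re-routed through $\mathcal{Q}_v$ by a standard Pym manipulation) produces a system in $\mathfrak{P}_{L'}(u,S_{L,u})\subseteq \mathfrak{P}_D(u,S_{L,u})$; for $u=v$ the system $\mathcal{Q}_v$ itself already works. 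Corollary \ref{cor: presLare} then yields the largeness of $L'$. For the second assertion, $\mathfrak{P}_{L'}(u,S_{L,u})\neq \varnothing$ forces $S_{L,u}\in \mathfrak{S}_{L'}(u)$ and hence $S_{L',u}\leq S_{L,u}$ in the lattice $\mathfrak{S}_{L'}(u)$, while $L'\subseteq L$ gives $B_{L',u}\subseteq B_{L,u}$ and so by Lemma \ref{l: largest EMsep is real} the reverse inequality.

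The technical core is thus to show that $T_{L,v}$ is linked to $v$ in $L'$. Proposition \ref{prop: A D v is a v-bubble} applied to $L$ yields a $v$-infan $\mathcal{P}^{*}$ in $L$ with $V^{-}(\mathcal{P}^{*})=T_{L,v}$. Let $\mathcal{Q}_v''$ be the $v$-infan formed by taking the terminal segment of each path in $\mathcal{Q}_v$ from its last intersection with $T_{L,v}$; then $\mathcal{Q}_v''\subseteq L'$, $V(\mathcal{Q}_v'')\cap T_{L,v}=V^{-}(\mathcal{Q}_v'')$, and $E^{+}(\mathcal{Q}_v'')=E^{+}(\mathcal{Q}_v)=\mathsf{in}_{L'}(v)\setminus\{rv\}$. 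Apply Corollary \ref{cor: Pym} with $\mathcal{P}=\mathcal{P}^{*}$, $\mathcal{Q}=\mathcal{Q}_v''$, and $S=T_{L,v}$ to obtain a $v$-infan $\mathcal{R}$ in $L$ with $V^{-}(\mathcal{R})=T_{L,v}$ that covers $E^{+}(\mathcal{Q}_v)$.

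The final step is to verify $\mathcal{R}\subseteq L'$, i.e.\ $E^{+}(\mathcal{R})\subseteq \mathsf{in}_{L'}(v)$. By Pym's dichotomy, each $R\in \mathcal{R}$ is either in $\mathcal{Q}_v''$, of the form $Pv_RQ$ with $Q\in \mathcal{Q}_v''$, or an unmodified element of $\mathcal{P}^{*}$; in the first two subcases the last edge of $R$ lies in $E^{+}(\mathcal{Q}_v)\subseteq \mathsf{in}_{L'}(v)$, and only the third subcase is problematic. Mutual injections $\mathcal{Q}_v\hookrightarrow T_{L,v}$ and $\mathfrak{P}_L(v,T_{L,v})\hookrightarrow S_{L,v}$ via the last-intersection maps force $|S_{L,v}|=|T_{L,v}|$, which in the finite case immediately excludes surviving bad $\mathcal{P}^{*}$-paths. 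The main obstacle is the infinite case, where equal cardinality does not force set-equality $E^{+}(\mathcal{R})=E^{+}(\mathcal{Q}_v)$. I would overcome this by starting from the good subsystem $\mathcal{R}'\subseteq \mathcal{R}$ of paths ending via edges in $E^{+}(\mathcal{Q}_v)$ (which automatically lie in $L'$) and iterating the Augmenting Walk Lemma \ref{l: aug walk} inside $L'$ with $X=T_{L,v}$: whenever an unsuccessful augmentation would occur, the resulting separator of $v$ from $T_{L,v}$ in $L'$ of cardinality $|S_{L,v}|$ combines with $T_{L,v}$ via a bubble-uniting argument analogous to the proof of Lemma \ref{l: largest EMsep} to contradict the maximality of $T_{L,v}$ among EM separations in $L$, so the augmentation always succeeds, and a transfinite iteration produces the required $v$-infan linking all of $T_{L,v}$ to $v$ in $L'$.
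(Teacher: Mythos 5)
Your strategy hinges on verifying the hypothesis of Lemma \ref{l: largest EMsep}, namely that $T_{L,v}$ remains linked to $v$ in $L'=L\upharpoonright_v E^{+}(\mathcal{Q}_v)$. This is false in general, so no amount of augmenting-walk iteration can rescue it. Concretely, let $D=L$ have vertices $r,v$ and $s_i,w_i$ ($i<\omega$) with edges $rs_i$, $s_iw_i$, $s_iw_{i+1}$ and $w_iv$ for all $i<\omega$. Here $S_{L,v}=\{s_i: i<\omega\}$ and $T_{L,v}=N^{-}(v)=\{w_i: i<\omega\}$, and $\mathcal{Q}_v:=\{\,rs_iw_{i+1}v : i<\omega\,\}$ belongs to $\mathfrak{P}_{L}(v,S_{L,v})$. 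Since $E^{+}(\mathcal{Q}_v)=\{w_iv: i\geq 1\}$, passing to $L'$ deletes $w_0v$, which is the unique outgoing edge of $w_0$; hence $w_0$ cannot reach $v$ in $L'$ at all and $T_{L,v}$ is not linked to $v$ there (indeed $T_{L',v}=\{w_i:i\geq 1\}\neq T_{L,v}$ --- only the \emph{smallest} separation survives, which is exactly what the lemma's name records). This also explains why the contradiction you hope for at an unsuccessful augmentation never materialises: the separator produced by Lemma \ref{l: aug walk} lives in $L'$, not in $L$, so it does not compete with the maximality of $T_{L,v}$ in $\mathfrak{S}_{L}(v)$. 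Note that in the one place where the paper does invoke Lemma \ref{l: largest EMsep} (inside Claim \ref{cl: desired I}), the set $I$ is deliberately \emph{enlarged} beyond $E^{+}(\mathcal{Q})$ by a Pym construction precisely to force $T$ to stay linked; here you have no freedom to enlarge $E^{+}(\mathcal{Q}_v)$.

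The correct argument goes through the opposite end of the lattice: one shows that for every $u$ the \emph{largest} $u$-bubble $B_{L,u}$ (whose entrance is the \emph{smallest} separation $S_{L,u}$) remains a $u$-bubble in $L'$; for $u=v$ this is witnessed directly by the terminal segments of $\mathcal{Q}_v$ from $S_{L,v}$, and for $u\neq v$ at most one path of a witnessing infan uses a deleted edge, so a single application of Lemma \ref{l: aug walk} plus Lemma \ref{l: bubble unite} (uniting with $B_{L,v}$) suffices. A further, smaller gap in your write-up: you assert $B_{L',u}\subseteq B_{L,u}$ from $L'\subseteq L$ to get $S_{L,u}\leq S_{L',u}$, but a $u$-bubble of $L'$ need not be a $u$-bubble of $L$, since deleting edges can shrink the entrance of a set and thereby create new bubbles; handling exactly this possibility (via the case $\mathsf{ent}_{L-ru}(B_{L',u})\supsetneq\mathsf{ent}_{L'-ru}(B_{L',u})$) is a substantial part of the actual proof.
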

\begin{proof}
    Since $ L $ is large, \cref{l: largest EMsep is real} and \cref{cor: bubb in-neigbh} ensure that $ 
    N^{-}_{D-ru}(u) \subseteq B_{L,u}$ for every $ u\in V-r $.
    In particular, all the edges in $ E(L)\setminus E(L') $ are spanned by $ B_{L,v} $.
    We are going to prove that $ B_{L,u}\in \mathsf{bubb}_{L'}(u) $ holds for every $ u\in V-r $.
    Let us first show that this is sufficient. First of all it implies $ B_{L',u}\supseteq B_{L,u} $ for $ u\in V-r $ because $ B_{L',u} $ is the $ \subseteq $-largest element of $ \mathsf{bubb}_{L'}(u) $. By \cref{l: largest EMsep is real} this implies the largeness of $ L' $.
    Let $ u\in V-r $ be given.
    If $ \mathsf{ent}_{L-ru}(B_{L',u})= \mathsf{ent}_{L'-ru}(B_{L',u})$, then $ \mathsf{ent}_{L}(B_{L',u})-u= \mathsf{ent}_{L'}(B_{L',u})-u $ which implies $ B_{L',u}\in \mathsf{bubb}_{L}(u) $ and hence $B_{L,u}\supseteq B_{L',u} $, therefore $ B_{L,u}= B_{L',u} $.
    But then 
    \[
        S_{L,u}=\mathsf{ent}_{L-ru}(B_{L,u})= \mathsf{ent}_{L-ru}(B_{L',u})= \mathsf{ent}_{L'-ru}(B_{L',u})=S_{L',u} \, .
    \]  
    Suppose for a contradiction that $ \mathsf{ent}_{L-ru}(B_{L',u})\neq \mathsf{ent}_{L'-ru}(B_{L',u})$.
    Then we must have $ \mathsf{ent}_{L-ru}(B_{L',u})\supsetneq \mathsf{ent}_{L'-ru}(B_{L',u}) $ with $ \mathsf{ent}_{L-ru}(B_{L',u})\setminus \mathsf{ent}_{L'-ru}(B_{L',u})=\{ v \} $.
    This means that $ v\in \mathsf{int}_{L'-ru}(B_{L',u}) $ and there is some $ wv\in E(L)\setminus E(L') $ with $ w\notin B_{L',u} $.
    Then either $ v\in \mathsf{int}_{L'}(B_{L',u}) $ or $ u=v $.
    Thus by applying \cref{l: bubble unite} with $ B_{L',u} $ and $ B_{L,v}\in \mathrm{bubb}_{L'}(v) $ we conclude that $B_{L',u}\cup B_{L,v}\in \mathrm{bubb}_{L'}(u)$.
    It follows that actually $B_{L',u}\supseteq B_{L,v} $ because $ B_{L',u} $ is the $ \subseteq $-largest element of $ \mathrm{bubb}_{L'}(u) $ by definition.
    But then
    \[
        w\in B_{L,v}\subseteq B_{L',u}\not \ni w \, ,
    \]
    a contradiction.

    Now we turn to the proof of $ B_{L,u}\in \mathsf{bubb}_{L'}(u) $ for every $ u\in V-r $.
    For $ u=v $ this is witnessed by the terminal segments of the paths in $ \mathcal{Q}_v $ from $ S_{L,v} $.
    Let $ u\in V-r-v $ be arbitrary and let $ \mathcal{P} $ be a path-system that witnesses $B_{L,u}\in \mathsf{bubb}_{L}(u) $.
    We can assume that $ \mathcal{P} $ uses some $e\in E(L)\setminus E(L')$, since otherwise $\mathcal{P} $ ensures that $ B_{L,u}\in \mathsf{bubb}_{L'}(u)$ and we are done.
    Let $ P_e $ be the unique path in $ \mathcal{P} $ through $ e $.
    The head $ v $ of $ e $ must be in $ \mathsf{int}_{L}(B_{L,u}) $ because $ V^{-} \mathcal{P}) = \mathsf{ent}_{L}(B_{L,u})-u $ with $ u\neq v $ and the paths in $ \mathcal{P} $ are pairwise disjoint.
    Then $ B_{L,u}\supseteq B_{L,v} $ since otherwise \cref{l: bubble unite} would give $B_{L,u}\subsetneq (B_{L,u}\cup B_{L,v})\in \mathsf{bubb}_{L}(u) $ which is a contradiction.
    We apply \cref{l: aug walk} in $ L' $ with $\mathsf{ent}_{L}(B_{L,u})-u, u $ and $ \mathcal{P}-P_e $.
    If the augmentation is successful, the resulting path-system must lie in $ L'[B_{L,u}] $ and witnesses $B_{L,u} \in \mathsf{bubb}_{L'}(u)$ thus we are done.

    Suppose that the augmentation is unsuccessful, we depict this situation in \cref{fig:lemma_separating_sets_stay_the_same}.
    Then we can choose a unique $ v_P\in V(P)-u $ from each $ P\in \mathcal{P}-P_e $ such that the resulting $ S $ separates $ u $ from $ \mathsf{ent}_{L}(B_{L,u})-u $ in $L'$.
    We know that the terminal segment $vP_eu$ of $ P_e $ must lie in $ B_{L',S,u} $ since otherwise $ S $ would not separate $ u $ from $ \mathsf{ent}_{L}(B_{L,u})-u $ in $ L' $.
    Thus in particular $ v\in B_{L',S,u} $ with $ v\notin S $, i.e.~$ v\in \mathsf{int}_{L'}(B_{L',S,u})$.
    But then $ B:=B_{L',S,u}\cup B_{L,v} \in \mathsf{bubb}_{L'}(u) $ is a subset of $ B_{L,u} $ with $ \mathsf{ent}_{L}(B)-u=\mathsf{ent}_{L'}(B)-u $ because the edges in $ E(L)\setminus E(L') $ are spanned by $ B_{L,v} $.
    Finally, the initial segments of the paths in $ \mathcal{P} $ until the first common vertex with $ B $ lie in $ L' $ and can be forward extended using $ B \in \mathsf{bubb}_{L'}(u) $ to a path-system witnessing $ B_{L,u}\in \mathsf{bubb}_{L'}(u) $.
\end{proof}

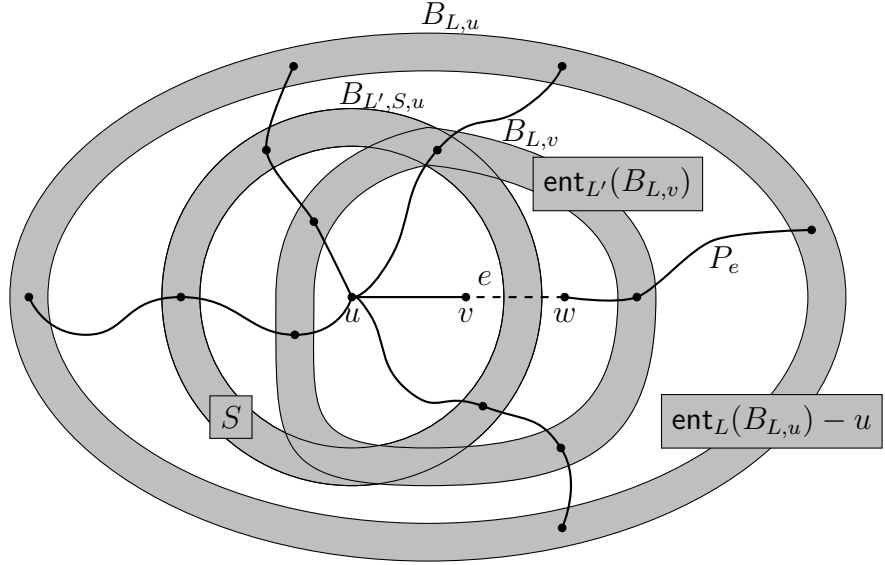
\begin{figure}[ht]
	\centering
	\begin{tikzpicture}[scale=.5]
		\node[shape = coordinate] (u) at (0,0) {};
		\node[shape = coordinate] (v) at (3,0) {};
		\node[shape = coordinate] (el) at (2,0) {};
		\node[shape = coordinate] (x1) at (5.5,-4) {};
		\node[shape = coordinate] (x2) at (7.5,0) {};
		\node[shape = coordinate] (x3) at (-1,2) {};
		\node[shape = coordinate] (x4) at (-1.5,-1) {};

		\node[ellipse,draw,fill = lightgray,minimum width = 11cm, minimum height = 7cm] (BL) at (el) {};
		\node[ellipse,draw, outer sep = .25cm, fill = white,minimum width = 10cm, minimum height = 6cm] (BL) at (el) {};
		\node[circle,draw,minimum size = 5cm,fill = lightgray] (outeru) at (u){};
		\node[circle,draw,minimum size = 4cm, outer sep = .25cm, fill = white] (inneru) at (u){};
		\draw[fill,lightgray] plot [smooth,tension=1.5] coordinates { (2,4.5) (8,0) (2,-5) (-2,0) (2,4.5) } -- plot [smooth,tension=1.5] coordinates { (2,3.5) (-1,0) (2,-4) (7,0) (2,3.5) };
		\draw plot [smooth,tension=1.5] coordinates { (2,4.5) (8,0) (2,-5) (-2,0) (2,4.5) };
		\draw plot [smooth,tension=1.5] coordinates { (2,3.5) (-1,0) (2,-4) (7,0) (2,3.5) };
		\node[circle,draw,minimum size = 5cm] at (u){};
		\node[circle,draw,minimum size = 4cm, outer sep = .25cm] at (u){};
		
		\draw[thick,green!90!black] plot [smooth, tension = 1] coordinates { (BL.300) (x1) (inneru.320) (1.5,-2.5) (.5,-.5) (u) };
		\draw[thick,blue] plot [smooth, tension = .5] coordinates { (BL.10)  (9.5,1.5) (x2) (inneru.10) (v) (u) };
		\draw[thick,green!90!black] plot [smooth, tension = 1] coordinates { (BL.60) (4.5,5) (inneru.60) (1,1) (u) };
		\draw[thick,green!90!black] plot [smooth, tension = .5] coordinates { (BL.120) (inneru.120) (x3) (u) };
		\draw[thick,green!90!black] plot [smooth, tension = 1] coordinates { (BL.180) (-7,-1) (inneru.180) (x4) (u) };
		
		\node[below] at (u) {$u$};
		\node[below] at (v) {$v$};
		\node[blue] at (9.8,1) {$P_{e}$};
		\node at (3.3 ,0.5) {$e$};
		\draw (BL.340) node [draw,fill=lightgray] {$\mathsf{ent}_{L}(B_{L,u})-u$};
		\draw (inneru.225) node [draw,fill=lightgray] {$S$};
		\draw (7,3) node [draw,fill=lightgray] {$\mathsf{ent}_{L^\prime}(B_{L,v})$};
		
		\foreach \x in {u,v,x1,x2,x3,x4} \draw[fill] (\x) circle [radius=.1];
		\foreach \x in {10,60,120,180,320} \draw[fill] (inneru.\x) circle [radius=.1];
		\foreach \x in {10,60,120,180,300} \draw[fill] (BL.\x) circle [radius=.1];
	\end{tikzpicture}
	\caption{The situation when the augmentation in the proof of \cref{l:  no collapse of smallest EMsep} is unsuccessful.}
    \label{fig:lemma_separating_sets_stay_the_same}
    \end{figure}

\section{Proof of the main result}
\subsection{Definitions and a sketch of the construction}
Let an $ r $-rooted digraph $ D=(V,E) $ of size $ \aleph_1 $ be fixed.
First of all, we can assume by \cref{cor: wlog D is quasi-flame} that $ D $ is a quasi-vertex-flame.
Let $ \left\langle M_\alpha: \alpha \leq\omega_1 \right\rangle $ be a sequence such that
\begin{itemize}
    \item $ M_0=\varnothing $;
    \item $ M_\alpha $ is an elementary submodel for each $\alpha>0 $;
    \item $ M_\alpha=\bigcup_{\beta<\alpha}M_\beta $ if $ \alpha $ is a limit ordinal;
    \item $ M_\alpha $ is countable for $ \alpha<\omega_1 $;
    \item $D, r, \left\langle M_\beta: \beta\leq\alpha \right\rangle, \alpha\in M_{\alpha+1} $ for $ \alpha<\omega_1 $.
\end{itemize}
  
\begin{obs}\label{obs: elemtary contains}
    $ M_\beta\cup \{ M_\beta \}\subseteq M_\alpha $ for $ \beta<\alpha\leq \omega_1 $.
\end{obs}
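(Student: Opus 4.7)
The plan is to split the observation into two claims, $M_\beta \in M_\alpha$ and $M_\beta \subseteq M_\alpha$, and reduce both to the singleton membership $M_\gamma \in M_{\gamma+1}$ together with the monotonicity statement $M_\gamma \subseteq M_{\gamma+1}$.

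First I would establish $M_\beta \in M_{\beta+1}$. The bullet list gives $\langle M_\gamma : \gamma \leq \beta \rangle \in M_{\beta+1}$ and $\beta \in M_{\beta+1}$. Since the formula ``$y$ is the value of the sequence $s$ at the index $\gamma$'' is first-order in the parameters $s$ and $\gamma$, elementarity of $M_{\beta+1}$ in $H(\lambda)$ forces its unique witness, namely $M_\beta$, to lie in $M_{\beta+1}$.

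Next I would prove $M_\beta \subseteq M_\alpha$ for $\beta \leq \alpha \leq \omega_1$ by induction on $\alpha$. For limit $\alpha$ this is immediate from the chain-union condition $M_\alpha = \bigcup_{\gamma<\alpha} M_\gamma$. In the successor case $\alpha = \gamma+1$, the inductive hypothesis reduces the task to $M_\gamma \subseteq M_{\gamma+1}$. Here I would invoke the standard principle that a countable set $A$ which belongs to a nonempty elementary submodel $M$ of $H(\lambda)$ is in fact contained in $M$: since $\omega$ is first-order definable it lies in $M$, every natural number lies in $M$ for the same reason, and by elementarity $M$ contains an enumeration $f \colon \omega \to A$, so each value $f(n)$ with $n \in \omega \subseteq M$ also lies in $M$. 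Applying this to $A = M_\gamma$ (countable by hypothesis, and a member of $M_{\gamma+1}$ by the previous paragraph) completes the inductive step.

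Combining, for any $\beta < \alpha \leq \omega_1$ we have $M_\beta \in M_{\beta+1} \subseteq M_\alpha$ and $M_\beta \subseteq M_{\beta+1} \subseteq M_\alpha$, which is the statement. There is no serious obstacle here; the argument is a routine bookkeeping exercise on the chain, and the only substantive input is the classical principle that countable members of an elementary submodel of $H(\lambda)$ are in fact subsets of it.
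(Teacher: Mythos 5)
Your proof is correct; the paper states this Observation without proof, and your argument (definability of $M_\beta$ from the sequence $\left\langle M_\gamma:\gamma\leq\beta\right\rangle$ and the index $\beta$, plus the standard fact that a countable element of an elementary submodel of $H(\lambda)$ is also a subset of it, combined by induction along the chain) is exactly the standard reasoning the authors intend.
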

Let $ \boldsymbol{M^{\alpha}}:=M_{\alpha+1}\setminus M_\alpha $ for $ \alpha<\omega_1 $ and we define $ \boldsymbol{V_\alpha}:=V\cap M_\alpha,\boldsymbol{D_\alpha}:=D\cap M_\alpha=D[V_\alpha] $ for $\alpha \leq \omega_1 $ as well as
\begin{align*}
    \boldsymbol{V^{\alpha}}&:=V\cap M^{\alpha},\\
    \boldsymbol{D^{\alpha}}&:=(D\cap M_{\alpha+1})\setminus \left[E(D_\alpha)\cup \mathsf{out}_D(V_\alpha-r) \right] , \text{ and}\\
    \boldsymbol{D^{\alpha\leq}}&:=D\setminus \left[E(D_\alpha)\cup \mathsf{out}_D(V_\alpha-r) \right] 
\end{align*} 
for $ \alpha<\omega_1 $.
We choose enumerations $ V_{\alpha}-r=\{\boldsymbol{v_{\alpha,n}} \colon n<\omega\} $ and $ V^{\alpha}-r=\{\boldsymbol{v^{\alpha}_{n}} \colon n<\omega \} $ for $ \alpha<\omega_1 $ (technically we fix a choice function $ c\in M_1 $ and we choose the enumerations accordingly).
Recall that every countable ordinal number can be written uniquely in the form $ \omega \alpha+n $ where $ n<\omega $.
We obtain an enumeration $ V-r=\{ v_\xi: \xi<\omega_1 \} $ by letting $ \boldsymbol{v_{\omega \alpha +n}}:=v^{\alpha}_n $.
Observe that $ V_{\alpha}-r=\{ v_\xi: \xi<\omega \alpha \} $ for $ \alpha\leq \omega_1 $.
We shall construct a sequence $ \left\langle L_\xi: \xi\leq\omega_1 \right\rangle $ of large subdigraphs of $ D $ with
\begin{itemize}
    \item $ L_0=D $;
    \item we obtain $ L_{\xi+1} $ by the deletion some of the incoming edges of $ v_\xi $ from $ L_\xi $;
    \item $ L_\nu=\bigcap_{\xi<\nu}L_\xi $ if $ \nu $ is a limit ordinal.
\end{itemize}
Before giving the complete list of properties of the recursive construction, we need some definitions.
We also explain roughly the ideas behind what we are going to do in order to make it easier to follow the formal proof afterwards.
First of all, $ L_{\omega_1} $ will be a large vertex-flame which completes the proof of \cref{t: main result}.
For $ v=v_{\omega \beta +n} $, let $ \boldsymbol{S_v}:=S_{L_{\omega \beta},v} $.
Note that after $ L_{\omega \beta} $ is defined, the separations $ S_v $ will be defined for the following countably many vertices, namely for the vertices in $ V^{\beta} $.
By guaranteeing that $ L_\xi $ is large for every $ \xi $, we will automatically ensure $ S_v\in \mathfrak{S}_{D,v} $ by \cref{l: largest EMsep is real}.
We strive to end up with path-systems $ \mathcal{P}_v\in \mathfrak{P}_{L_{\omega_1}}(v, S_v) $ with $E^{+}(\mathcal{P}_v) = \mathsf{in}_{L_{\omega_1}}(v)-rv$ in $ L_{\omega_1} $ for every $ v\in V-r $.
Note that the path-system $ \mathcal{P}_v $ promised in the abstract is can be chosen to be the $ \mathcal{P}_v $ we are about to construct whenever $ rv\notin E(D) $, otherwise the single-edge path $ rv $ need to be added to get a promised path-system.
The Path-systems $ \mathcal{P}_v $ witness that $ L_{\omega_1} $ is indeed a large vertex-flame.
For each $ v\in V-r $ we build the path-system $ \mathcal{P}_v $ ``layer by layer'' according to our chain of elementary submodels (see \cref{figure: layer}) in the following sense.
If $ v=v_{\omega \beta+n} $, then we construct first a segment $\mathcal{P}_{v,\beta+1}\in \mathfrak{P}_{L_{\omega_1}\cap D_{\beta+1}}(v,S_v\cap V_{\beta+1}) $ with $E^{+} \mathcal{P}_{v,\beta+1})= \mathsf{in}_{L_{\omega_1}\cap D_{\beta+1}}(v)-rv $. 
In every new layer we extend this by a new segment: For every $ \gamma $ with $\beta+1\leq \gamma <\omega_1 $ we construct a path-system $\mathcal{P}^{\gamma}_v\in \mathfrak{P}_{L_{\omega_1}\cap D^{\gamma}}(v, S_v\cap V^{\gamma}) $ with $E^{+}(\mathcal{P}^{\gamma}_{v}) = \mathsf{in}_{L_{\omega_1}\cap D^{\gamma}}(v) $.
Since by the definition of $ D^{\gamma} $ the paths in $\mathcal{P}^{\gamma}_v $ are internally disjoint from $ V_\gamma $, the new segments never share any internal vertex with the already constructed segments.
By letting
\[ 
    \mathcal{P}_{v,\alpha}:=\mathcal{P}_{v,\beta+1}\cup\bigcup_{\beta+1\leq\gamma<\alpha} \mathcal{P}^{\gamma}_{v}
\]
for $\beta+1\leq \alpha \leq \omega_1 $, the path-system $ \mathcal{P}_v:=\mathcal{P}_{v,\omega_1} $ will be as desired.

Instead of constructing $\mathcal{P}_{v,\beta+1} $ and $\mathcal{P}^{\gamma}_v $ ``directly'' we are going to build some supersets of them and throw away the surplus paths.
For this we let $ \boldsymbol{L^{\omega \beta+ n}}:= L_{\omega \beta+ n}\cap D^{\beta\leq}$.

\begin{figure}[H]
    \centering
    \begin{tikzpicture}
            \node[shape = coordinate] (r) at (.3,1.9) {};
            \node[shape = coordinate] (v) at (1.9,.3) {};
            \node[shape = coordinate] (outr1) at (.7,2.4) {};
            \node[shape = coordinate] (outr2) at (.9,2.4) {};
            \node[shape = coordinate] (outr3) at (.5,2.4) {};
            \node[shape = coordinate] (outr4) at (-.4,1.8) {};
            \node[shape = coordinate] (outr5) at (-.4,1.6) {};
            \node[shape = coordinate] (outv1) at (2.3,.6) {};
            \node[shape = coordinate] (outv2) at (2.3,.8) {};
            \node[shape = coordinate] (outv3) at (2.3,.4) {};
            \node[shape = coordinate] (outv4) at (1.8,-.4) {};
            \node[shape = coordinate] (outv5) at (1.6,-.4) {};
            \node[shape = coordinate] (s1) at (1,1) {};
            \node[shape = coordinate] (s2)  at (1.2,1.2) {};
            \node[shape = coordinate] (s3)  at (1.4,1.4) {};
            \node[shape = coordinate] (s4)  at (2.5,2.4) {};
            \node[shape = coordinate] (s5)  at (2.8,2.6) {};
            \node[shape = coordinate] (s6)  at (4.3,4) {};
            \node[shape = coordinate] (s7)  at (4.8,4.4) {};
            \node[shape = coordinate] (s8) at (3.5,3.2) {};
            \node[shape = coordinate] (s9) at (5.2,5) {};
            \node[shape = coordinate] (s10) at (5.6,5.3) {};

            \draw[green!60!black,dashed]  plot[smooth cycle, tension=.7] coordinates {(0.8,1) (1.4,1.6) (2.2,2.4) (2.8,3) (3.4,3.6) (4.2,4.4) (5.6,5.8) (6,5.4) (4.8,3.8) (3.8,3) (2,1.6) (1.2,0.8)};

            \draw plot [smooth cycle] coordinates { (0,0) (1,-.2)  (2,0)  (2,2) (0,2) (-.2,1) };
            \draw plot [smooth] coordinates { (2,0) (4,0) (4,4) (0,4) (0,2) };
            \draw plot [smooth] coordinates { (1,-.2) (1,-1) (5,-1) (5,5) (-1,5) (-1,1) (-.2,1) };
            \draw[dotted] plot [smooth] coordinates { (5,-1) (6,-.6) (6,6) (-.6,6) (-1,5) };

            \draw[blue] plot [smooth] coordinates { (r) (1.4,1.4) (v) };
            \draw[blue] plot [smooth] coordinates { (r) (1.2,1.2) (v) };
            \draw[blue] plot [smooth] coordinates { (r) (1,1) (v) };
            
            \draw[red] plot [smooth, tension = 1.2] coordinates { (outr1) (1.3,2.8) (2.5,2.9) (3.3,1) (outv1) };
            \draw[red] plot [smooth, tension = 1.2] coordinates { (outr2) (1.5,2.7) (2.5,2.4) (2.9,1.2) (outv2) };
            \draw[red] plot [smooth, tension = 1.2] coordinates { (outr3) (1.5,3.7) (s8) (3.53,0.5) (outv3) };
            
            \draw[green] plot [smooth] coordinates { (outr4) (-.2,4.6) (s6) (4,-.2) (outv4) };
            \draw[green] plot [smooth] coordinates { (outr5) (-.8,2) (-.3,4.8) (s7) (4.3,-.4) (2,-.8) (outv5) };
            
            \foreach \y in {r,v} \foreach \x in {1,2,3,4,5} \draw (\y) to (out\y\x);
            
            \node[below] at (r) {$r$};
            \node[left] at (v) {$v$};
            \node at (-.3,-.3) {$D_{\beta+1}$};
            \node at (.4,3.6) {$D^{\beta+1}$};
            \node at (-2,3.5) {$D^{\beta+2}$};
            \node at (6.7,-.6) {$D^{\beta+2\leq}$};
            \node[blue] at (.7,.7) {$\mathcal{P}_{v,\beta+1}$};
            \node[red] at (3.7,.5) {$\mathcal{P}_{v}^{\beta+1}$};
            \node[green] at (4.3,-1) {$\mathcal{P}^{\beta+2}_{v}$};
            \node[green!60!black] at (3.2,3) {$S_v$};

            \foreach \x in {r,v,outr1,outr2,outr3,outr4,outr5,outv1,outv2,outv3,outv4,outv5, s1, s2, s3, s4, s5, s6, s7, s8,s9,s10} \draw[fill] (\x) circle [radius=.05];
    \end{tikzpicture}
    \caption{A sketch of the strategy to build $ \mathcal{P}_v $.}\label{figure: layer}
    \label{fig:general_approach}
\end{figure}
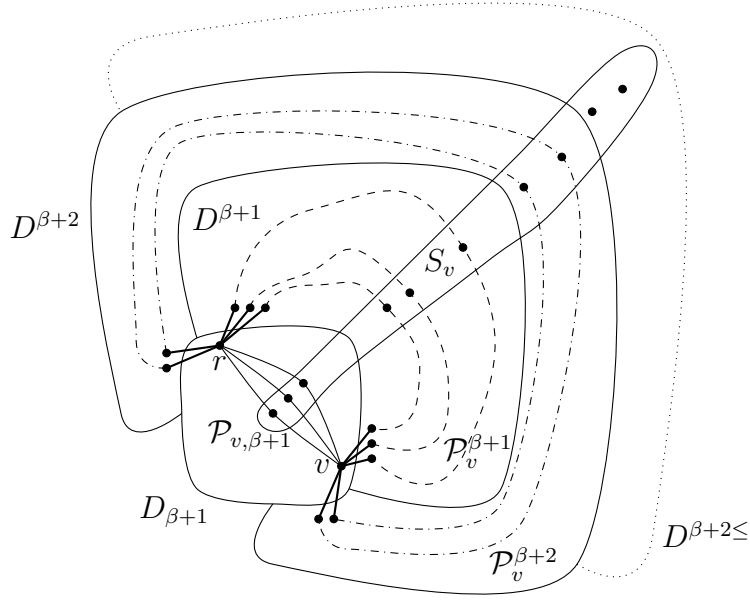

\subsection{The conditions of the recursion}
Let us now make the construction precise.
We shall define by transfinite recursion sequences 
\[
    \left\langle \mathcal{Q}_{\xi}: \xi<\omega_1 \right\rangle,\ \left\langle \mathcal{Q}^{\xi}: \omega\leq \xi<\omega_1 \right\rangle\text{ and } \left\langle L_\xi: \xi \leq \omega_1 \right\rangle
\]
satisfying the following properties:
\begin{enumerate}[label={(\arabic*)},ref={(\arabic*)},font=\upshape]
    \item\label{item: main 1} $ L_0=D $;
    \item\label{item: main 2} $\mathcal{Q}_{\xi}\in \mathfrak{P}_{L_\xi}(v_\xi, S_{v_\xi}) $;
    \item\label{item: main 3} $ L_{\xi+1}:=L_{\xi} \upharpoonright_{v_{\xi} } E^{+}(\mathcal{Q}_{\xi}) $;
    \item \label{item: main 4} $ L_\nu=\bigcap_{\xi<\nu}L_\xi $ if $ \nu $ is a limit ordinal;
    \item \label{item: main 5} $ L_\xi $ is $D$-large;
    \item \label{item: main 6} $ S_{L_{\omega \alpha+n},v}= S_v$ for every $\omega \alpha+n<\omega_1 $ and $ v\in 
    V^{\alpha} $;
    \item\label{item: main 7} For every $\omega \alpha+n<\omega_1 $ and $ v\in V_\alpha-r $: 
    \begin{enumerate}[label={(\alph*)},ref={(7\hspace{1pt}\alph*)},font=\upshape]
        \item\label{item: main 7a} $ S_{v}\setminus V_\alpha\in 
        \mathfrak{S}_{L^{\omega \alpha+n} }(v) $;
        \item\label{item: main 7b} $ L^{\omega \alpha+n} $ has the vertex-flame property at $ v$;
        \item\label{item: main 7c} If $ v=v_{\alpha, n} $, then $ \mathcal{Q}^{\omega\alpha+n}\in \mathfrak{P}_{L^{\omega 
        \alpha+n}}(v,S_{v}\setminus 
        V_\alpha) $ with 
        \[ E^{+}(\mathcal{Q}^{\omega\alpha+n})=\mathsf{in}_{L^{\omega \alpha+n}}(v); \]
    \end{enumerate}
    \item\label{item: main 8}
    $\left\langle\mathcal{Q}_{\xi}:\xi <\nu \right\rangle,\ \left\langle \mathcal{Q}^{\xi}: \omega\leq \xi <\nu \right\rangle,\ 
    \left\langle L_\xi: \xi < \nu \right\rangle\in M_{\alpha+1}$ for $\nu=\omega\alpha +n<\omega_1$; 
    \item\label{item: main 9} For $ v=v_{\omega\beta+m} $:
    \begin{align*}
        \bigcup_{n<m} \mathsf{in}_{Q_{\omega\beta+n}}(v) &\subseteq E^{+}(\mathcal{Q}_{\omega\beta+m})+rv\text{ if } 
        \beta=0 \text{ and }\\
        \bigcup_{n\leq m}\mathsf{in}_{Q^{\omega\beta+n}}(v)\cup \bigcup_{n<m} \mathsf{in}_{Q_{\omega\beta+n}}(v) 
        &\subseteq E^{+}(\mathcal{Q}_{\omega\beta+m})+rv\text{ if } \beta>0. 
    \end{align*}
\end{enumerate}
Note that $ L_\nu $ is uniquely determined by $ \left\langle \mathcal{Q}_\xi: \xi<\nu \right\rangle $ (see properties \labelcref{item: main 1,item: main 2,item: main 3,item: main 4}, thus we are going to always have  at most one suitable choice for $ L_\xi $ but we still need to check if it respects the conditions.
The preservation of property \labelcref{item: main 8} will follow immediately from the fact that the definitions of $ \left\langle\mathcal{Q}_{\xi} \colon \xi <\omega \alpha+n \right\rangle $ and $ \left\langle \mathcal{Q}^{\xi} \colon \omega\leq \xi < \omega \alpha+n \right\rangle $ rely only on parameters that are in $ M_{\alpha+1} $, namely $D, r,\ \left\langle M_\beta, \beta \leq \alpha \right\rangle,\ c $ and vertices $ v_{\omega \alpha+k}, v_{\alpha,k} $ for $ k<n $ (where $ c $ is some fixed choice function).

\subsection{The path-systems \texorpdfstring{$ \boldsymbol{\mathcal{P}_{v, \beta+1}} $}{Pvb} and 
\texorpdfstring{$ \boldsymbol{\mathcal{P}^{\gamma}_{v}} $}{Pcv}}
Let $ \nu\leq \omega_1 $ and suppose that $ \mathcal{Q}_\xi $ and $ L_\xi $ are defined for $ \xi<\nu $ and $ \mathcal{Q}^{\xi} $ is defined for $ \omega \leq \xi<\nu $ and none of the conditions \labelcref{item: main 1,item: main 2,item: main 3,item: main 4,item: main 5,item: main 6,item: main 7,item: main 8,item: main 9} is violated so far.
Let $ v=v_{\omega \beta+n} $ for some $ \omega \beta+n<\nu $ be fixed. 
We define $\boldsymbol{\mathcal{P}_{v, \beta+1}}:= \mathcal{Q}_{\omega \beta+n}\cap M_{\beta+1} $.
By the definition of enumerations $ \{ v_{\gamma,n}: n<\omega \} $, for every $ \gamma $ with $\beta+1\leq \gamma<\omega_1$ there is a unique $ m_{\gamma}<\omega $ with $ v_{\gamma, m_\gamma}=v $.
We let $ \boldsymbol{\mathcal{P}^{\gamma}_{v}}:=\mathcal{Q}^{\omega\gamma+m_\gamma}\cap M_{\gamma+1} $ whenever $\beta+1\leq \gamma$ and $ \omega\gamma+m_\gamma<\nu $.

Property \labelcref{item: main 9} is designed to prevent the deletion of edges of the path-systems $\mathcal{P}_{v,\beta+1} $ and $ \mathcal{P}^{\gamma}_{v} $:
\begin{lem}\label{l: preserving path-system}
    Let $ v=v_{\omega \beta+n} $ for some $ \omega \beta+n<\nu $.
    The path-systems $\mathcal{P}_{v,\beta+1} $ and $ \mathcal{P}^{\gamma}_{v} $ lie in $ L_{\xi}$ for every $ \xi<\nu $.
\end{lem}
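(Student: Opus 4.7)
The plan is to trace the fate of each edge $e = uw$ of $\mathcal{P}_{v,\beta+1}$ or $\mathcal{P}^{\gamma}_v$ through the recursion. Property~(\ref{item: main 3}) tells us that $e$ can be deleted only at the unique step $\xi_0 + 1$ with $v_{\xi_0} = w$, and then only if $e \notin E^{+}(\mathcal{Q}_{\xi_0}) + rw$. Accordingly, to show $e \in L_{\xi}$ for every $\xi < \nu$, it suffices to verify at this (unique) potentially harmful step that either $\xi_0 + 1 > \nu$ (so no deletion has taken place yet), or that the step $\xi_0+1$ already lies below the stage at which the ambient path-system was constructed (so $e \in L_{\omega\beta+n}$ or $e \in L_{\omega\gamma+m_\gamma}$ already witnesses survival), or that the membership $e \in E^{+}(\mathcal{Q}_{\xi_0}) + rw$ holds directly. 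Property~(\ref{item: main 9}) is engineered precisely to deliver this last containment in the remaining ``future-deletion'' scenarios.

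For $e \in E(\mathcal{P}_{v,\beta+1}) = E(\mathcal{Q}_{\omega\beta+n} \cap M_{\beta+1})$, elementarity forces $u, w \in V_{\beta+1}$, so $\xi_0 = \omega\beta'+k$ with $\beta' \leq \beta$. If $\xi_0 < \omega\beta+n$ then $\xi_0+1 \leq \omega\beta+n$ and $e \in E(\mathcal{Q}_{\omega\beta+n}) \subseteq L_{\omega\beta+n}$ shows that $e$ survived that step. If $\xi_0 = \omega\beta+n$, i.e.\ $w = v$, then $e$ is the final edge of its path in $\mathcal{Q}_{\omega\beta+n}$, hence $e \in E^{+}(\mathcal{Q}_{\omega\beta+n})$. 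Otherwise $\xi_0 = \omega\beta+k$ with $k > n$, and property~(\ref{item: main 9}) applied to $w = v_{\omega\beta+k}$ gives $e \in \mathsf{in}_{\mathcal{Q}_{\omega\beta+n}}(w) \subseteq E^{+}(\mathcal{Q}_{\omega\beta+k}) + rw$.

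For $e \in E(\mathcal{P}^{\gamma}_v) = E(\mathcal{Q}^{\omega\gamma+m_\gamma} \cap M_{\gamma+1})$ the argument is parallel, but relies on an extra observation: every \emph{internal} vertex $w$ of a path in $\mathcal{Q}^{\omega\gamma+m_\gamma}$ must lie in $V^{\gamma}$, since its outgoing edge sits in $L^{\omega\gamma+m_\gamma} \subseteq D^{\gamma\leq}$, forcing $w \notin V_\gamma - r$, while elementarity places $w$ in $V_{\gamma+1}$. Consequently either $w = v$, in which case $\xi_0+1 = \omega\beta+n+1 \leq \omega(\beta+1) \leq \omega\gamma \leq \omega\gamma+m_\gamma$ and $e \in L^{\omega\gamma+m_\gamma} \subseteq L_{\omega\gamma+m_\gamma}$ already witnesses survival; or $w = v_{\omega\gamma+k}$ with $k \neq m_\gamma$, where the subcase $k < m_\gamma$ is handled exactly as before, and for $k > m_\gamma$ property~(\ref{item: main 9}) (applicable because $\gamma \geq 1$) supplies $e \in \mathsf{in}_{\mathcal{Q}^{\omega\gamma+m_\gamma}}(w) \subseteq E^{+}(\mathcal{Q}_{\omega\gamma+k}) + rw$.

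No individual step of the analysis is deep; the main thing to keep straight is the bookkeeping of the two enumerations $v_{\omega\alpha+n} = v^{\alpha}_n$ and $v_{\alpha, n}$ together with their placement in the submodel chain, in order to confirm that property~(\ref{item: main 9}) is always being invoked in the correct direction (from the smaller index used in the ambient path-system to the larger index at which the deletion is scheduled).
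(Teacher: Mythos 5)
Your overall strategy --- tracing each edge $e=uw$ to the unique step $\xi_0+1$ with $v_{\xi_0}=w$ at which it could possibly be deleted, and then discharging that step either because $e$ is known to belong to some later $L_\xi$ or because property~(\ref{item: main 9}) protects it --- is the same mechanism the paper uses (the paper organizes the argument per step rather than per edge and dismisses $\mathcal{P}^{\gamma}_{v}$ with ``goes similarly''). Your treatment of $\mathcal{P}_{v,\beta+1}$ is complete and correct.

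There is, however, a gap in the $\mathcal{P}^{\gamma}_{v}$ half: you write ``$w=v_{\omega\gamma+k}$ with $k\neq m_\gamma$'' and never treat the case $k=m_\gamma$. Nothing excludes it: $v_{\omega\gamma+m_\gamma}=v^{\gamma}_{m_\gamma}$ is just some vertex of $V^{\gamma}$, determined by an enumeration unrelated to $v=v_{\gamma,m_\gamma}$, and it may perfectly well occur as an internal vertex of a path of $\mathcal{Q}^{\omega\gamma+m_\gamma}$. This is exactly the one subcase where both of your mechanisms fail: the deletion step $\omega\gamma+m_\gamma+1$ is not ``already survived'', since property~(\ref{item: main 7})/(c) only places $\mathcal{Q}^{\omega\gamma+m_\gamma}$ in $L^{\omega\gamma+m_\gamma}\subseteq L_{\omega\gamma+m_\gamma}$, and property~(\ref{item: main 9}) at $v_{\omega\gamma+m_\gamma}$ only covers $\mathsf{in}_{\mathcal{Q}^{\omega\gamma+k}}(v_{\omega\gamma+m_\gamma})$ for $k<m_\gamma$, not for $k=m_\gamma$. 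To close the gap one needs an extra observation, e.g.\ that $\mathcal{Q}^{\omega\alpha+n}$ can be chosen inside $L^{\omega\alpha+n}\cap L_{\omega\alpha+n+1}$: conditions \ref{item: succ a} and \ref{item: succ b} of Claim~\ref{cl: desired I} yield (\ref{item: main 7})/(a),(b) for $L^{\omega\alpha+n+1}$, and $\mathsf{in}_{L^{\omega\alpha+n}}(v_{\alpha,n})=\mathsf{in}_{L^{\omega\alpha+n+1}}(v_{\alpha,n})$ because the edges removed at step $\omega\alpha+n+1$ end in $v^{\alpha}_{n}\in V^{\alpha}$ while $v_{\alpha,n}\in V_\alpha$; hence Corollary~\ref{cor: EMpathsys covers I} applied in $L^{\omega\alpha+n+1}$ still produces a path-system satisfying (\ref{item: main 7})/(c), and that choice survives the critical step. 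The paper's ``goes similarly'' glosses over the same point, but having committed to an explicit case analysis you need to address it rather than define it away.
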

\begin{proof}
    Since $ \mathcal{Q}_{\omega \beta+n} $ is a path-system in $ L_{\omega \beta+n} $ (see property \labelcref{item: main 2}) so is $\mathcal{P}_{v,\beta+1} $.
    It follows from properties \labelcref{item: main 1,item: main 2,item: main 3,item: main 4} that $ L_\xi $ is a $ \subseteq $-decreasing function of $ \xi $, this implies that $\mathcal{P}_{v,\beta+1} $ is a path-system in $ L_\xi $ for $ \xi\leq \omega \beta+n $.
    By property \labelcref{item: main 3}, we do not delete any edges of $ \mathcal{Q}_{\omega \beta+n}\supseteq \mathcal{P}_{v,\beta+1}$ when we obtain $ L_{\omega \beta+n+1} $ from $ L_{\omega \beta+n} $.
    Property \labelcref{item: main 9} applied to vertices $ v_{\omega \beta+m} $ with $ n<m<\omega $ together with property \labelcref{item: main 3} guarantees that none of the edges of $\mathcal{P}_{v,\beta+1}$ is deleted when we construct $ L_{\omega \beta+m} $ for $ m>n $.
    Thus $\mathcal{P}_{v,\beta+1} $ is a path-system in $ L_{\omega(\beta+1)} $ as well by property \labelcref{item: main 4}.
    Whenever $ P\in M_{\beta+1} $ is a path, we have $ V(P)\subseteq M_{\beta+1} $, therefore $\mathcal{P}_{v,\beta+1} $ lies completely in $ D_{\beta+1} $ and after step $ \omega(\beta+1) $ we delete only edges $ e $ whose head is in $ V\setminus V_{\beta+1} $. 
    Thus the path-system $\mathcal{P}_{v,\beta+1} $ lies in $ L_\xi $ for every $ \xi $ with $ \omega(\beta+1)<\xi<\nu $ as well.
    The proof for $ \mathcal{P}^{\gamma}_{v} $ goes similarly.
\end{proof}
 
\begin{prop}\label{prop: P v beta plus 1}
    If $ v=v_{\omega \beta+n} $ for some $ \omega \beta+n<\nu $, then $\mathcal{P}_{v,\beta+1}\in \mathfrak{P}_{D_{\beta+1}}(v, S_v\cap V_{\beta+1})$.
    Furthermore, $ \mathsf{in}_{L_{\xi}}(v)\cap E(D_{\beta+1})-rv=E^{+}(\mathcal{P}_{v,\beta+1}) $ for every $\xi\in (\omega \beta+n, \nu) $.
\end{prop}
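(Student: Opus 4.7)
The plan is to exploit elementarity of $M_{\beta+1}$ together with property~(\ref{item: main 8}), which (applied at stage $\omega\beta+n+1\leq \nu$, valid since $\omega\beta+n<\nu$) yields $\mathcal{Q}_{\omega\beta+n}\in M_{\beta+1}$. The central technical observation I would establish first is: for any path $P\in \mathcal{Q}_{\omega\beta+n}$, the equivalences
\[
P\in M_{\beta+1}\iff V(P)\subseteq M_{\beta+1}\iff V(P)\subseteq V_{\beta+1}
\]
hold. The nontrivial direction is that even a single vertex $u\in V(P)\cap M_{\beta+1}$ already forces $P\in M_{\beta+1}$: since $P$ is the unique element of $\mathcal{Q}_{\omega\beta+n}$ containing $u$, elementarity (with parameters $u,\mathcal{Q}_{\omega\beta+n}\in M_{\beta+1}$) puts $P$ into $M_{\beta+1}$, and then finiteness of $P$ gives $V(P)\subseteq M_{\beta+1}$.

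With this observation in hand, Part~1 is a matter of two routine verifications. First, the paths in $\mathcal{P}_{v,\beta+1}\subseteq \mathcal{Q}_{\omega\beta+n}$ inherit internal disjointness and are $r\to v$ paths; each such $P$ satisfies $V(P)\subseteq V_{\beta+1}$, so $P$ lies in $D[V_{\beta+1}]=D_{\beta+1}$. Second, for orthogonality to $S_v\cap V_{\beta+1}$: each $P\in \mathcal{P}_{v,\beta+1}$ has a unique internal vertex in $S_v$ (coming from $\mathcal{Q}_{\omega\beta+n}\in \mathfrak{P}_{L_{\omega\beta+n}}(v,S_v)$), and this vertex automatically lies in $V_{\beta+1}$; conversely, any $s\in S_v\cap V_{\beta+1}$ is the chosen internal vertex of some $P_s\in \mathcal{Q}_{\omega\beta+n}$, and $s\in M_{\beta+1}$ forces $P_s\in \mathcal{P}_{v,\beta+1}$ by the observation above.

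For Part~2, one containment is immediate from Lemma~\ref{l: preserving path-system} (the paths of $\mathcal{P}_{v,\beta+1}$ survive in every $L_\xi$), together with the fact that no path in $\mathfrak{P}_D(v,S)$ consists of the single edge $rv$. The reverse containment I would argue as follows: property~(\ref{item: main 3}) gives
\[
\mathsf{in}_{L_{\omega\beta+n+1}}(v)-rv\subseteq E^{+}(\mathcal{Q}_{\omega\beta+n}),
\]
and since $L_\xi$ is $\subseteq$-decreasing in $\xi$ (by properties~(\ref{item: main 3}) and~(\ref{item: main 4})) the same inclusion holds with $L_\xi$ in place of $L_{\omega\beta+n+1}$ for every $\xi>\omega\beta+n$. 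Thus any $e\in \mathsf{in}_{L_\xi}(v)\cap E(D_{\beta+1})-rv$ is the last edge of a uniquely determined $P_e\in \mathcal{Q}_{\omega\beta+n}$; its tail lies in $V_{\beta+1}\subseteq M_{\beta+1}$, so the observation from the first paragraph forces $P_e\in \mathcal{P}_{v,\beta+1}$, yielding $e\in E^{+}(\mathcal{P}_{v,\beta+1})$. I expect no deeper obstacle than careful elementarity bookkeeping: the entire proposition reduces to the single finiteness-plus-definability observation, with the remaining bookkeeping essentially read off from the recursion conditions.
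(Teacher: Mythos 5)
Your proposal is correct and follows essentially the same route as the paper: extract $\mathcal{Q}_{\omega\beta+n}\in M_{\beta+1}$ from property~(8) (which also needs $\omega\beta+n\in M_{\beta+1}$, available since $\beta\in M_{\beta+1}$), and use definability-plus-finiteness to show each path of $\mathcal{Q}_{\omega\beta+n}$ either lies entirely in $D_{\beta+1}$ or is internally disjoint from $V_{\beta+1}$. One small correction: your claim that \emph{any} vertex $u\in V(P)\cap M_{\beta+1}$ forces $P\in M_{\beta+1}$ is false for $u\in\{r,v\}$, since every path of the system contains both; the argument requires $u$ to be an \emph{internal} vertex (as in the paper), which is what you in fact use in all three applications (the separator vertex $s$, and the tail of a non-$rv$ ingoing edge), so the proof stands.
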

\begin{proof}
    Note that $ \omega \beta+n\in M_{\beta+1} $ because $ \beta\in M_{\beta+1} $ by assumption.
    Hence by property~\labelcref{item: main 8}, $\mathcal{Q}_{\omega \beta+n}\in M_{\beta+1} $.
    Each $P\in \mathcal{Q}_{\omega \beta+n} $ who has an internal vertex $ u $ in $ V_{\beta+1}$ is definable from $ \mathcal{Q}_{\omega \beta+n} $ and $ u $ and therefore must be in $ M_{\beta+1} $.
    This means that for each $P\in \mathcal{Q}_{\omega \beta+n} $ either $ V(P)\subseteq V_{\beta+1} $ or $ P $ is internally disjoint from $ V_{\beta+1} $.
    Thus by property~\labelcref{item: main 2}, it follows that $\mathcal{P}_{v,\beta+1}\in\mathfrak{P}_{D_{\beta+1}}(v, S_v\cap V_{\beta+1}) $.
    Moreover, by property \labelcref{item: main 3}, $ \mathcal{P}_{v,\beta+1} $ covers $\mathsf{in}_{L_{\omega \beta+n+1}}(v)\cap E(D_{\beta+1})-rv $ which is the same as $\mathsf{in}_{L_{\xi}}(v)\cap E(D_{\beta+1})-rv $ whenever $\omega \beta+n+1\leq \xi<\nu $ by properties \labelcref{item: main 1,item: main 2,item: main 3,item: main 4}.
\end{proof}

\begin{prop}\label{prop: P v gamma}
    If $ v=v_{\gamma, m_\gamma}=v_{\omega\beta+n} $ with $ \omega \gamma+m_\gamma<\nu $, then $\mathcal{P}_{v}^{\gamma}\in \mathfrak{P}_{D^{\gamma}}(v,S_{v}\cap V^{\gamma}) $.
    Furthermore, $ \mathsf{in}_{L_\xi\cap D^{\gamma}}(v)=E^{+}(\mathcal{P}_{v}^{\gamma}) $ for every $ \xi\in (\omega\beta+n, \nu) $.
\end{prop}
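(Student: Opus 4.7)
The plan is to mirror the proof of Proposition~\ref{prop: P v beta plus 1}, replacing $D_{\beta+1}$, $V_{\beta+1}$, $M_{\beta+1}$ by $D^{\gamma}$, $V^{\gamma}$, $M_{\gamma+1}$ respectively and invoking $\mathcal{Q}^{\omega\gamma+m_{\gamma}}$ (together with invariant~(\ref{item: main 7})(c)) in the role that $\mathcal{Q}_{\omega\beta+n}$ (together with invariant~(\ref{item: main 2})) played there. Since $\gamma\in M_{\gamma+1}$ and $m_{\gamma}<\omega$, the ordinal $\omega\gamma+m_{\gamma}$ lies in $M_{\gamma+1}$, so property~(\ref{item: main 8}) applied at stage $\omega\gamma+m_{\gamma}+1$ yields $\mathcal{Q}^{\omega\gamma+m_{\gamma}}\in M_{\gamma+1}$.

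For any $P\in\mathcal{Q}^{\omega\gamma+m_{\gamma}}$ with an internal vertex $u\in M_{\gamma+1}$, the path $P$ is definable as the unique member of $\mathcal{Q}^{\omega\gamma+m_{\gamma}}$ through $u$, hence $P\in M_{\gamma+1}$ and $V(P)\subseteq M_{\gamma+1}$ as $P$ is finite. Because $\mathcal{Q}^{\omega\gamma+m_{\gamma}}$ lives in $L^{\omega\gamma+m_{\gamma}}\subseteq D^{\gamma\leq}$, no path in it has an internal vertex in $V_{\gamma}$. Combining these two observations, each $P\in\mathcal{Q}^{\omega\gamma+m_{\gamma}}$ either has all of its internal vertices in $V^{\gamma}=V_{\gamma+1}\setminus V_{\gamma}$ (and consequently $P\subseteq D^{\gamma}$), or it is internally disjoint from $V_{\gamma+1}$. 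Thus $\mathcal{P}_{v}^{\gamma}=\mathcal{Q}^{\omega\gamma+m_{\gamma}}\cap M_{\gamma+1}$ is precisely the subsystem of the first kind. Since invariant~(\ref{item: main 7})(c) forces each path of $\mathcal{Q}^{\omega\gamma+m_{\gamma}}$ to meet $S_{v}\setminus V_{\gamma}$ in exactly one internal vertex, for paths of the first kind this vertex lies in $V^{\gamma}\cap S_{v}=S_{v}\cap V^{\gamma}$, giving $\mathcal{P}_{v}^{\gamma}\in\mathfrak{P}_{D^{\gamma}}(v,S_{v}\cap V^{\gamma})$.

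For the coverage claim, the vertex $v=v_{\omega\beta+n}$ is the ``active'' one of the construction only at step $\omega\beta+n$, hence by properties~(\ref{item: main 3}) and~(\ref{item: main 4}) we have $\mathsf{in}_{L_{\xi}}(v)=\mathsf{in}_{L_{\omega\beta+n+1}}(v)=E^{+}(\mathcal{Q}_{\omega\beta+n})+rv$ for every $\xi\in(\omega\beta+n,\nu)$. Since $\beta<\gamma$ forces $r,v\in V_{\gamma}$ and hence $rv\in E(D_{\gamma})\not\subseteq E(D^{\gamma})$, intersecting with $E(D^{\gamma})$ yields
\[
    \mathsf{in}_{L_{\xi}\cap D^{\gamma}}(v)=\{\,uv\in E^{+}(\mathcal{Q}_{\omega\beta+n}) : u\in V^{\gamma}\,\}.
\]
On the other hand, invariant~(\ref{item: main 7})(c) identifies $E^{+}(\mathcal{Q}^{\omega\gamma+m_{\gamma}})$ with $\mathsf{in}_{L^{\omega\gamma+m_{\gamma}}}(v)$, which by the definition of $L^{\omega\gamma+m_{\gamma}}=L_{\omega\gamma+m_{\gamma}}\cap D^{\gamma\leq}$ equals $\{\,uv\in E^{+}(\mathcal{Q}_{\omega\beta+n}):u\notin V_{\gamma}\,\}$, and the further intersection with $M_{\gamma+1}$ defining $\mathcal{P}_{v}^{\gamma}$ restricts the tails to $V^{\gamma}$, producing the same set. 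The only subtle point is the bookkeeping of the partition $V_{\gamma+1}=V_{\gamma}\sqcup V^{\gamma}$ and tracking which ingoing edges of $v$ survive in $L^{\omega\gamma+m_{\gamma}}$, in $D^{\gamma}$ and in $M_{\gamma+1}$; no ideas essentially new compared to Proposition~\ref{prop: P v beta plus 1} are required.
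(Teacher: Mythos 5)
Your proposal is correct and follows essentially the same route as the paper: both hinge on $\mathcal{Q}^{\omega\gamma+m_\gamma}\in M_{\gamma+1}$ (via property~(\ref{item: main 8})), invariant~(\ref{item: main 7})(c), and the elementary-submodel definability argument splitting $\mathcal{Q}^{\omega\gamma+m_\gamma}$ into paths lying in $D^{\gamma}$ and paths internally disjoint from $V_{\gamma+1}$, followed by the bookkeeping identities $(S_v\setminus V_\gamma)\cap V_{\gamma+1}=S_v\cap V^{\gamma}$ and $L^{\omega\gamma+m_\gamma}\cap D_{\gamma+1}=L^{\omega\gamma+m_\gamma}\cap D^{\gamma}$. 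You merely spell out explicitly (and correctly) some computations the paper leaves implicit by reference to Proposition~\ref{prop: P v beta plus 1}.
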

\begin{proof}
    Note that $ \beta< \gamma $ because $ v\in V_\gamma $ by $ v=v_{\gamma, m_\gamma} $ and $ \beta+1 $ is the smallest ordinal with $ v\in V_{\beta+1} $ by $ v=v_{\omega\beta+n} $ (see the definition of the enumerations after \cref{obs: elemtary contains}).
    Since $ \mathcal{Q}^{\omega\gamma+m_\gamma}\in M_{\gamma+1} $, we obtain via property \labelcref{item: main 7c} that $ \mathcal{P}_{v}^{\gamma} \in \mathfrak{P}_{D^{\gamma}}(v,(S_{v}\setminus V_{\gamma})\cap V_{\gamma+1}) $ and $ E^{+} \mathcal{P}_{v}^{\gamma})=\mathsf{in}_{L^{\omega \gamma+m_\gamma}\cap D_{\gamma+1}}(v) $.
    The first part of the proposition follows by observing that $ (S_{v}\setminus V_{\gamma})\cap V_{\gamma+1}=S_v\cap V^{\gamma} $ by definition.
    The second part follows from the fact that: $ L^{\omega \gamma+m_\gamma}\cap D_{\gamma+1}=L^{\omega \gamma+m_\gamma}\cap D^{\gamma} $ (which is also a direct consequence of the corresponding definitions).
    Finally $ \mathsf{in}_{L^{\xi}}(v) $ remains the same for every $ \xi\in (\omega \beta+n, \nu) $ by properties \labelcref{item: main 1,item: main 2,item: main 3,item: main 4}.
\end{proof}
 
\begin{lem}\label{l: rest of Q}
    If $ v=v_{\omega \beta+n} $ for some $ \omega \beta+n<\nu $ and $\beta<\alpha < \omega_1 $, then we have $\mathcal{Q}_{\omega \beta+n}\setminus M_\alpha\in \mathfrak{P}_{L^{\omega \alpha}}(v, S_v\setminus V_\alpha)$.
    Furthermore, $ \mathsf{in}_{L_{\xi}}(v)\cap E(D^{\alpha \leq}) =E^{+}(\mathcal{Q}_{\omega   \beta+n}\setminus M_\alpha)$ for every $\xi\in (\omega \beta+n, \nu) $.
\end{lem}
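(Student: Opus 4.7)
The plan is to mimic the proof of Proposition~\ref{prop: P v beta plus 1}, now with the countable elementary submodel $M_\alpha$ in place of $M_{\beta+1}$, and to combine the resulting elementarity dichotomy with the observation that the recursion up to stage $\omega\alpha$ has only modified ingoing edges of vertices in $V_\alpha-r$.

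First I would invoke property (\ref{item: main 8}) with $\nu' := \omega\beta + n + 1$ to obtain $\mathcal{Q}_{\omega\beta+n}\in M_{\beta+1}\subseteq M_\alpha$. Since the paths in this system are internally disjoint, each $P\in \mathcal{Q}_{\omega\beta+n}$ is definable from $\mathcal{Q}_{\omega\beta+n}$ together with any single internal vertex of $P$; by elementarity either $V(P)\subseteq V_\alpha$ and $P\in M_\alpha$, or $P$ is internally disjoint from $V_\alpha$ and $P\notin M_\alpha$. Writing $\mathcal{Q}':=\mathcal{Q}_{\omega\beta+n}\setminus M_\alpha$, the elements of $\mathcal{Q}'$ are precisely those paths that leave $V_\alpha$ immediately after $r$ and return to it only at $v$.

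Next I would verify the three requirements for $\mathcal{Q}'\in \mathfrak{P}_{L^{\omega\alpha}}(v,S_v\setminus V_\alpha)$. Containment of $\mathcal{Q}'$ in $D^{\alpha\leq}$ is immediate from the dichotomy since every edge of a path in $\mathcal{Q}'$ is either outgoing from $r$ or has its tail outside $V_\alpha$, hence belongs neither to $E(D_\alpha)$ nor to $\mathsf{out}_D(V_\alpha-r)$. Containment in $L_{\omega\alpha}$ follows from properties (\ref{item: main 3}) and (\ref{item: main 4}): stages $\xi<\omega\alpha$ only ever delete ingoing edges of $v_\xi\in V_\alpha-r$, so the internal vertices of any $P\in \mathcal{Q}'$ retain their full in-neighbourhood, while the last edge of $P$ survives the single relevant stage $\omega\beta+n+1$ because $P\in \mathcal{Q}_{\omega\beta+n}$ and $L_{\omega\beta+n+1}=L_{\omega\beta+n}\upharpoonright_v E^+(\mathcal{Q}_{\omega\beta+n})$. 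Orthogonality to $S_v\setminus V_\alpha$ is inherited from the orthogonality of $\mathcal{Q}_{\omega\beta+n}$ to $S_v$ (property (\ref{item: main 2})): the unique internal witness $s_P$ of $P\in \mathcal{Q}_{\omega\beta+n}$ lies in $V_\alpha$ exactly when $P\in M_\alpha$, so the witnesses of the paths in $\mathcal{Q}'$ comprise exactly $S_v\setminus V_\alpha$.

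For the ``furthermore'' statement, I would observe that for every $\xi\in (\omega\beta+n,\nu)$ one has $\mathsf{in}_{L_\xi}(v) = E^+(\mathcal{Q}_{\omega\beta+n})+rv$, since no subsequent stage of the recursion touches the ingoing edges of $v$. Intersecting with $E(D^{\alpha\leq})$ discards $rv$ (as $r,v\in V_\alpha$) together with the last edges of the paths in $\mathcal{Q}_{\omega\beta+n}\cap M_\alpha$ (whose tails lie in $V_\alpha-r$), leaving precisely the last edges of the paths in $\mathcal{Q}'$. The only delicate point is keeping the elementarity dichotomy and the stage-by-stage bookkeeping of the recursion in sync; no substantively new idea beyond the one already present in Proposition~\ref{prop: P v beta plus 1} should be required.
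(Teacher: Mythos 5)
Your argument is essentially identical to the paper's: property (\ref{item: main 8}) gives $\mathcal{Q}_{\omega\beta+n}\in M_{\beta+1}\subseteq M_\alpha$, the elementarity dichotomy shows every path of $\mathcal{Q}_{\omega\beta+n}\setminus M_\alpha$ is internally disjoint from $V_\alpha$, and the rest is the same bookkeeping with properties (\ref{item: main 2})--(\ref{item: main 4}). The only (harmless) imprecision is in the $D^{\alpha\leq}$-containment step: for the first edge of a path in $\mathcal{Q}'$ one should note that its \emph{head} (an internal vertex) lies outside $V_\alpha$, since having tail $r$ alone does not exclude membership in $E(D_\alpha)$.
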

\begin{proof}
    First of all, $ \mathcal{Q}_{\omega \beta+n} $ lies in $ L_{\omega \beta+n} $ according to property \labelcref{item: main 2} and we have already seen that $ \mathcal{Q}_{\omega \beta+n}\in M_{\beta+1}\subseteq M_\alpha $.
    Thus each path in $ \mathcal{Q}_{\omega \beta+n}\setminus M_\alpha $ is internally disjoint from $ V_{\alpha} $. 
    This gives via property \labelcref{item: main 2} that $\mathcal{Q}_{\omega \beta+n}\setminus M_\alpha\in \mathfrak{P}_{L^{\omega \beta+n}}(v, S_v\setminus V_\alpha)$.
    In order to obtain $ L^{\omega \alpha} $ from $ L^{\omega \beta+n} $, we delete only edges whose heads are in $V_\alpha-r $.
    The only such edges in $ E(\mathcal{Q}_{\omega \beta+n}\setminus M_\alpha) $ are also in $ E^{+}( \mathcal{Q}_{\omega \beta+n}) $ but we do not delete any of those by property \labelcref{item: main 3}, therefore $ \mathcal{Q}_{\omega \beta+n}\setminus M_\alpha\in \mathfrak{P}_{L^{\omega \alpha}}(v, S_v\setminus V_\alpha) $.
    The second part follows directly from properties \labelcref{item: main 1,item: main 2,item: main 3,item: main 4}.
\end{proof}

\subsection{Limit step}
Suppose now that $ \nu=\omega \alpha $ for some $ \alpha \leq \omega_1 $ and as earlier assume that $ \mathcal{Q}_\xi $ and $ L_\xi $ are defined for $ \xi<\nu $ and $ \mathcal{Q}^{\xi} $ is defined for $ \omega \leq \xi<\nu $ and none of the conditions \labelcref{item: main 1,item: main 2,item: main 3,item: main 4,item: main 5,item: main 6,item: main 7,item: main 8,item: main 9} is violated so far.
If $ \alpha=0 $, then let $ L_0:=D $ which is our only possible choice according property \labelcref{item: main 1} and this choice does not violate any of the conditions for trivial reasons.
If $ \alpha>0 $, then our only option is to let $ L_{\omega \alpha}:=\bigcap_{\xi<\omega \alpha}L_\xi $ (see property \labelcref{item: main 4}).
We need to check that none of \labelcref{item: main 1,item: main 2,item: main 3,item: main 4,item: main 5,item: main 6,item: main 7,item: main 8,item: main 9} is violated.
Note that $\mcalQ^{\omega \alpha}$ and $\mcalQ_{\omega \alpha}$ will be defined from $L_{\omega \alpha}$ in step $\omega \alpha + 1$, thus we need not check any clause that refers to either of them.
The preservation of \labelcref{item: main 1,item: main 2,item: main 3,item: main 4} is clear.
We are intended to apply \cref{cor: presLare} to demonstrate the largeness of $ L_{\omega \alpha} $ (property \labelcref{item: main 5}).
Clearly $ \mathsf{out}_D(r)\subseteq L_{\omega \alpha} $ because we never delete any outgoing edge of $ r $ (this was built into the definition of ``$ \upharpoonright_v $'').
Note that $ \mathsf{in}_{L_{\omega \alpha}}(v)\subsetneq \mathsf{in}_D(v) $ may happen only for $ v\in V_\alpha-r $.
For every $ v\in V_\alpha-r $ there is some $ \beta<\alpha $ and $ n<\omega $ such that $ v=v_{\omega \beta+n} $.
We know by property \labelcref{item: main 5} that $ L_{\omega \beta} $ is large but then $S_{L_{\omega \beta}, v} $ (which is $ S_v $ by definition) is in $ \mathfrak{S}_{D}(v) $ (see \cref{l: largest EMsep is real}).
We define 
\[
    \boldsymbol{\mathcal{P}_{v,\alpha}}:=\mathcal{P}_{v,\beta+1}\cup\bigcup_{\beta+1\leq\gamma<\alpha} \mathcal{P}^{\gamma}_{v} \, .
\]
Note that $ \mathsf{in}_{L_{\omega \alpha}}(v)\cap E(D_{\beta+1})-rv=E^{+}(\mathcal{P}_{v,\beta+1}) $ and $ \mathsf{in}_{L_{\omega \alpha}\cap D^{\gamma}}(v)=E^{+}(\mathcal{P}_{v}^{\gamma}) $ follow from \cref{prop: P v beta plus 1} and \cref{prop: P v gamma} respectively via $ L_{\omega\alpha}=\bigcap_{\xi<\nu}L_\xi $.
Thus these propositions have the following consequence:
\begin{cor}\label{cor: P v alpha}
For every $ v\in V_\alpha-r $, $ \mathcal{P}_{v,\alpha}\in \mathfrak{P}_{D_{\alpha}}(v, S_v\cap V_{\alpha}) $ with $ E^{+}(\mathcal{P}_{v, \alpha})= \mathsf{in}_{L_{\omega \alpha}}(v)\cap E(D_{\alpha})-rv $.
\end{cor}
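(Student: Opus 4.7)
The plan is to derive the corollary as a bookkeeping consequence of Propositions~\ref{prop: P v beta plus 1} and~\ref{prop: P v gamma}, combined with a layerwise decomposition of $ V_\alpha $ and of $ \mathsf{in}_D(v) \cap E(D_\alpha) $ along the elementary chain. Throughout I fix $ v = v_{\omega\beta + n} \in V_\alpha - r $ and use the disjoint-union decomposition $ V_\alpha = V_{\beta+1} \sqcup \bigsqcup_{\beta+1 \leq \gamma < \alpha} V^\gamma $, which is immediate from $ V^\gamma = V \cap (M_{\gamma+1} \setminus M_\gamma) $ and the fact that $ \langle M_\delta \rangle $ is $ \subseteq $-increasing.

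First I would check internal disjointness. By Proposition~\ref{prop: P v beta plus 1} the internal vertices of paths in $ \mathcal{P}_{v, \beta+1} $ lie in $ V_{\beta+1} $, and for each $ \gamma $ with $ \beta+1 \leq \gamma < \alpha $ every internal vertex $ u $ of a path in $ \mathcal{P}_v^\gamma \subseteq D^\gamma $ must lie in $ V^\gamma $: indeed $ u \in V \cap M_{\gamma+1} $, $ u \in V_\gamma - r $ is impossible because $ D^\gamma $ contains no outgoing edge of any vertex in $ V_\gamma - r $, and $ u \neq r $ because $ r $ is the starting endpoint. Since $ V_{\beta+1}, V^{\beta+1}, V^{\beta+2}, \dots $ are pairwise disjoint and each subsystem is itself internally disjoint, $ \mathcal{P}_{v, \alpha} $ is an internally disjoint system of $ r \rightarrow v $ paths in $ D_\alpha $.

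Next I would establish orthogonality. Intersecting the decomposition of $ V_\alpha $ with $ S_v $ gives $ S_v \cap V_\alpha = (S_v \cap V_{\beta+1}) \sqcup \bigsqcup_{\beta+1 \leq \gamma < \alpha}(S_v \cap V^\gamma) $. Propositions~\ref{prop: P v beta plus 1} and~\ref{prop: P v gamma} guarantee that $ \mathcal{P}_{v, \beta+1} $ is orthogonal to $ S_v \cap V_{\beta+1} $ and each $ \mathcal{P}_v^\gamma $ to $ S_v \cap V^\gamma $; since their internal vertices lie in the disjoint pieces just identified, the chosen internal vertices together realise exactly $ S_v \cap V_\alpha $, hence $ \mathcal{P}_{v, \alpha} \in \mathfrak{P}_{D_\alpha}(v, S_v \cap V_\alpha) $.

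For the edge equality, I would verify the disjoint decomposition
\[
\mathsf{in}_D(v) \cap E(D_\alpha) = \bigl(\mathsf{in}_D(v) \cap E(D_{\beta+1})\bigr) \sqcup \bigsqcup_{\beta+1 \leq \gamma < \alpha} \bigl(\mathsf{in}_D(v) \cap E(D^\gamma)\bigr)
\]
by classifying each $ uv $ according to the unique layer of its tail $ u $: for $ u \in V^\gamma $ one has $ u \notin V_\gamma $, so $ uv \notin E(D_\gamma) $ and $ uv \notin \mathsf{out}_D(V_\gamma - r) $, giving $ uv \in E(D^\gamma) $. Note that $ rv \in E(D_{\beta+1}) $ whereas $ rv \notin E(D^\gamma) $ for $ \gamma \geq \beta+1 $ because $ rv \in E(D_\gamma) $ then. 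Since $ L_{\omega\alpha} = \bigcap_{\xi < \omega\alpha} L_\xi $ and the right-hand sides in the two propositions are constant in $ \xi $ on $ (\omega\beta + n, \omega\alpha) $, intersecting transfers the equalities to $ L_{\omega\alpha} $; summing layerwise yields $ E^+(\mathcal{P}_{v, \alpha}) = \mathsf{in}_{L_{\omega\alpha}}(v) \cap E(D_\alpha) - rv $. The only substantive step here is the edge decomposition above; everything else is a direct layerwise assembly.
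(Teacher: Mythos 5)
Your proof is correct and follows essentially the same route as the paper, which derives the corollary directly from Propositions~\ref{prop: P v beta plus 1} and~\ref{prop: P v gamma} via $L_{\omega\alpha}=\bigcap_{\xi<\omega\alpha}L_\xi$; you merely make explicit the layerwise decomposition of $V_\alpha$ and of $\mathsf{in}_D(v)\cap E(D_\alpha)$ that the paper leaves implicit (and had already indicated in its sketch of the construction). No gaps.
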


It follows from \cref{l: rest of Q} and \cref{cor: P v alpha} that if $ v=v_{\omega \beta+n}\in V_\alpha-r $, then
\[
    \mathcal{P}_{v, \alpha}\cup (\mathcal{Q}_{\omega \beta+n}\setminus M_\alpha)\in \mathfrak{P}_{L_{\omega \alpha}}(v, S_v) \, .
\]
Thus $ L_{\omega \alpha} $ is large by \cref{cor: presLare}.
Property \labelcref{item: main 6} for $ n=0 $ is true by the definition of $ S_v $.
Property \labelcref{item: main 7a} and \labelcref{item: main 7b} for $ v=v_{\omega\beta+n} $ is witnessed by $ \mathcal{Q}_{\omega \beta+n}\setminus M_\alpha $ according to \cref{l: rest of Q}.
Property \labelcref{item: main 8} is maintained, because as we already argued the transfinite recursion so far can be carried out in $ M_{\alpha+1} $ since it relies only on the parameters $ D, r, \left\langle \mathcal{Q}_\xi \colon \xi<\omega \alpha \right\rangle, c\in M_{\alpha+1} $.
Finally, we do not check \labelcref{item: main 7c} and \labelcref{item: main 9}, since they refer to $\mcalQ^{\omega \alpha}$.

\begin{lem}\label{l: lim linked U from r}
    $ L_{\omega \alpha}\cap D_\alpha $ is a $ D_\alpha $-large vertex-flame.
    In particular, if $ \alpha=\omega_1 $, then $ L_{\omega_1} $ is a large vertex-flame.
\end{lem}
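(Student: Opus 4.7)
The plan is to use the path-systems $\mathcal{P}_{v,\alpha}$ constructed just above as simultaneous witnesses for both halves of the conclusion. Fix any $v\in V_\alpha-r$. By Corollary~\ref{cor: P v alpha}, we already have that
\[
\mathcal{P}_{v,\alpha}\in \mathfrak{P}_{D_\alpha}(v,S_v\cap V_\alpha) \text{ with } E^{+}(\mathcal{P}_{v,\alpha})=\mathsf{in}_{L_{\omega\alpha}}(v)\cap E(D_\alpha)-rv.
\]
Since $D_\alpha=D[V_\alpha]$, the right hand side coincides with $\mathsf{in}_{L_{\omega\alpha}\cap D_\alpha}(v)-rv$. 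Appending the trivial single-edge path $rv$ in case $rv\in E(D)$ then yields an internally disjoint $r\to v$ path-system in $L_{\omega\alpha}\cap D_\alpha$ that covers every ingoing edge of $v$, which is precisely the vertex-flame property of $L_{\omega\alpha}\cap D_\alpha$ at~$v$.

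For the $D_\alpha$-largeness I would check the reformulation recorded at the end of Subsection~\ref{subsection EM sep path}: namely $\mathfrak{P}_{D_\alpha}(v)\cap \mathfrak{P}_{L_{\omega\alpha}\cap D_\alpha}(v)\neq\varnothing$ for every $v\in V_\alpha-r$, together with $\mathsf{out}_{D_\alpha}(r)\subseteq L_{\omega\alpha}\cap D_\alpha$. The latter is immediate since outgoing edges of $r$ are never deleted during the construction (this is built into the $\upharpoonright_v$ operator). For the former, the key observation is that any $r\to v$ path in $D_\alpha-rv$ is also a path in $D-rv$ and lives entirely inside $V_\alpha$, so it meets the separator $S_v\in\mathfrak{S}_D(v)$ already inside $V_\alpha$. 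Hence $S_v\cap V_\alpha\in\mathfrak{S}_{D_\alpha}(v)$, and $\mathcal{P}_{v,\alpha}$ simultaneously lies in $\mathfrak{P}_{D_\alpha}(v,S_v\cap V_\alpha)$ and in $\mathfrak{P}_{L_{\omega\alpha}\cap D_\alpha}(v,S_v\cap V_\alpha)$ (the latter because the same vertex-set argument applies in the subdigraph). This supplies the required common element.

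For the ``in particular'' clause I would specialize to $\alpha=\omega_1$: then $V_{\omega_1}=V$, hence $D_{\omega_1}=D$ and $L_{\omega_1}\cap D_{\omega_1}=L_{\omega_1}$, so the first half of the lemma immediately produces a $D$-large vertex-flame and establishes Theorem~\ref{t: main result}. I do not anticipate any essentially new obstacle at this step: by the time the lemma is reached, all genuine difficulty has been absorbed into the careful bookkeeping of conditions~(\ref{item: main 1})-(\ref{item: main 9}), together with Propositions~\ref{prop: P v beta plus 1} and \ref{prop: P v gamma} and Lemma~\ref{l: rest of Q}, which feed into Corollary~\ref{cor: P v alpha}. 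The only point requiring a moment of care is the stability of $S_v\cap V_\alpha$ as a separator after intersecting with $V_\alpha$, but this is handled by the elementary vertex-set argument above.
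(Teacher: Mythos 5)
Your proposal is correct and follows essentially the same route as the paper's proof: Corollary~\ref{cor: P v alpha} supplies $\mathcal{P}_{v,\alpha}\in\mathfrak{P}_{D_\alpha}(v,S_v\cap V_\alpha)$ covering $\mathsf{in}_{L_{\omega\alpha}\cap D_\alpha}(v)-rv$, and $S_v\cap V_\alpha$ separates $v$ from $r$ in $D_\alpha-rv$ because $S_v$ does so in $D-rv$. The one step you should cite explicitly rather than wave at is Lemma~\ref{l: preserving path-system} together with property~(\ref{item: main 4}), which is what guarantees that $\mathcal{P}_{v,\alpha}$ actually survives into $L_{\omega\alpha}$ (and hence lies in $L_{\omega\alpha}\cap D_\alpha$) rather than merely in $D_\alpha$.
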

\begin{proof}
We know by \cref{cor: P v alpha} that $ \mathcal{P}_{v,\alpha}\in \mathfrak{P}_{D_{\alpha}}(v, S_v\cap V_{\alpha}) $ with $ E^{+}(\mathcal{P}_{v, \alpha})= \mathsf{in}_{L_{\omega \alpha}}(v)\cap E(D_{\alpha})-rv$.
Note that $ S_v\cap V_{\alpha} $ separates $ v $ from $ r $ in $ D_{\alpha}-rv $ because so does $ S_v $ in $ D-rv $.
Thus $ S_v\cap V_\alpha\in \mathfrak{S}_{D_\alpha}(v) $ witnessed by $ \mathcal{P}_{v,\alpha} $. Since the path-systems $ \mathcal{P}_{v,\alpha} $ for $ v\in V_\alpha-r $ lie in $ L_{\omega \alpha} $ (see \cref{l: preserving path-system} and \labelcref{item: main 4}) they show that $ L_{\omega \alpha}\cap D_\alpha $ is a $ D_\alpha $-large vertex-flame.
\end{proof}
We will make use of the following consequence.
\begin{cor}\label{cor: linked from r finite}
    For every finite $ U\subseteq V_\alpha-r $ which is linked from $ r $ in $ D $, it is linked from $ r $ also in $ L_{\omega \alpha}\cap D_\alpha $.
\end{cor}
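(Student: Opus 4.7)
The plan is to reduce the corollary to an application of Lemma \ref{l: linked from r U} inside the digraph $D_\alpha$, using Lemma \ref{l: lim linked U from r} as the source of largeness. Since the latter already tells us that $L_{\omega\alpha}\cap D_\alpha$ is $D_\alpha$-large, it suffices to upgrade the hypothesis and show that $U$ is linked from $r$ not only in $D$ but already in $D_\alpha$. Once this is established, Lemma \ref{l: linked from r U} applied with ambient digraph $D_\alpha$ and large subdigraph $L_{\omega\alpha}\cap D_\alpha$ yields an $r$-fan in $L_{\omega\alpha}\cap D_\alpha$ with endpoint-set $U$, which is exactly what we want. The case $\alpha=0$ is vacuous since $V_0=\varnothing$, so we may assume $\alpha\geq 1$.

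The upgrade from $D$ to $D_\alpha$ is a standard elementary submodel argument. Because $\alpha\geq 1$ we have $D,r\in M_1\subseteq M_\alpha$. The set $U$ is a finite subset of $V_\alpha=V\cap M_\alpha$, hence all its elements lie in $M_\alpha$; a finite set all of whose elements lie in $M_\alpha$ is itself an element of $M_\alpha$ (it is definable from those finitely many parameters via the pairing function, which is a ZFC-definable operation reflected by elementarity). Now the statement ``there exists an $r$-fan $\mathcal{F}$ in $D$ with $V^{+}(\mathcal{F})=U$'' is first-order in the parameters $D,r,U\in M_\alpha$, and it holds in $H(\lambda)$ by hypothesis, so by elementarity it holds in $M_\alpha$; pick such a witness $\mathcal{F}\in M_\alpha$. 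Since $|\mathcal{F}|=|U|$ is finite and each of its paths is finite, iterating the same ``finite subset of $M_\alpha$ belongs to $M_\alpha$'' argument shows that every vertex and every edge of $\mathcal{F}$ is in $M_\alpha$. Thus $\mathcal{F}$ is an $r$-fan inside $D\cap M_\alpha=D_\alpha$ with $V^{+}(\mathcal{F})=U$, witnessing that $U$ is linked from $r$ in $D_\alpha$.

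With this in hand, Lemma \ref{l: lim linked U from r} guarantees that $L_{\omega\alpha}\cap D_\alpha$ is $D_\alpha$-large, and Lemma \ref{l: linked from r U} then transports the linkage from $D_\alpha$ into $L_{\omega\alpha}\cap D_\alpha$, completing the argument. There is essentially no hard step here: both nontrivial lemmas are already in place, and the only thing to be careful about is the elementary submodel bookkeeping that justifies pushing the finite linkage witness down into $M_\alpha$. The proof is really just an orchestration of previously established facts.
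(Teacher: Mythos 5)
Your proof is correct and follows the paper's route exactly: the paper's own proof is the one-liner ``apply Lemma \ref{l: linked from r U} via Lemma \ref{l: lim linked U from r}''. The only extra content you supply is the elementary-submodel step showing that $U$ linked from $r$ in $D$ implies $U$ linked from $r$ in $D_\alpha$ — a step the paper leaves implicit here but carries out verbatim (for the set $U_F-r$) inside the proof of Claim \ref{cl: desired I}, so your bookkeeping is both correct and faithful to the intended argument.
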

\begin{proof}
This follows directly from \cref{l: lim linked U from r} via \cref{l: linked from r U}.
\end{proof}

\subsection{Successor step}
Suppose that there is some $ \omega\alpha+n<\omega_1 $ such that the following  are already defined without violating the conditions:
\begin{itemize}
    \item $ L_\xi $ for $ \xi \leq \omega\alpha+n $;
    \item $ \mathcal{Q}_\xi $ for $ \xi <\omega\alpha+n $;
    \item $ \mathcal{Q}^{\xi} $ for $\omega\leq \xi < \omega\alpha+n $.
\end{itemize}

Let $ v=v_{\omega\alpha+n} $.
Note that $ L_{\omega\alpha+n} $ is a quasi-vertex-flame by \cref{cor: large for a quasi is quasi} because $ D $ is a quasi-vertex-flame by assumption and $ L_{\omega\alpha+n} $ is large by property \labelcref{item: main 5}.
Suppose first that $ \alpha=0 $.
Let $ I:=\bigcup_{k<n} \mathsf{in}_{\mathcal{Q}_{k}}(v) $.
Since $ \left|I\right|\leq n $, we have $ I\in \mathcal{G}_{L_n}(v) $ by the quasi-vertex-flame property. We have $ S_n\in \mathfrak{S}_{D}(v) $ by property \labelcref{item: main 6}.
By applying \cref{cor: EMpathsys covers I} we pick a $ \mathcal{Q}_n \in \mathfrak{P}_{L_n}(v, S_v)$ that covers $ I-rv $.
We define $ L_{n+1}:=L_{n} \upharpoonright_{v}E^{+}(\mathcal{Q}_{n}) $.
Preservation of properties \labelcref{item: main 2}, \labelcref{item: main 3} and \labelcref{item: main 9}
follow directly from the construction.
Conditions \labelcref{item: main 1} and \labelcref{item: main 4} do not demand anything new for this step.
Properties~\labelcref{item: main 5} and \labelcref{item: main 6} are preserved by \cref{l:  no collapse of smallest EMsep}.
Since $ V_0=\varnothing $, property \labelcref{item: main 7} says nothing so far.
The definition of $ \mathcal{Q}_{n} $ and $ L_{n+1} $ used only $ L_{n}, v_n $ and choice function $ c $ as parameters all of which are in $ M_1 $, thus \labelcref{item: main 8} is preserved.

Assume now that $ \alpha>0 $.
In this case \labelcref{item: main 7} also has demands.
We choose $ \mathcal{Q}^{\omega\alpha+n} $ in accordance with \labelcref{item: main 7c} which is possible by combining \labelcref{item: main 7a} and \labelcref{item: main 7b} via \cref{cor: EMpathsys covers I}.
To fulfil \labelcref{item: main 7a} and \labelcref{item: main 7b} for $\omega \alpha +n +1$ we are going to pick an $ I $ according to the following \namecref{cl: desired I}. 
\begin{claim}\label{cl: desired I}
    There exists an $ I\in \mathcal{G}_{L_{\omega \alpha+n}}(v) $ such that:
\begin{enumerate} [label=(\Roman*)]
    \item\label{item: succ a} For every $ u\in V_{\alpha}-r $, $ S_u\setminus V_\alpha\in \mathfrak{S}_{L^{\omega \alpha+n}\upharpoonright_v I}(u) $;
    \item\label{item: succ b} $ L^{\omega \alpha+n}\upharpoonright_v I $ has the vertex-flame property for every $ u\in V_{\alpha}-r $;
    \item\label{item: succ c} $I\supseteq \bigcup_{k< n}\mathsf{in}_{Q^{\omega\alpha+k}}(v)\cup \bigcup_{k<n} \mathsf{in}_{Q_{\omega\alpha+k}}(v) =:\boldsymbol{F}$.
\end{enumerate}
\end{claim}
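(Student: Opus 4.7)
\emph{Strategy.} The key leverage is that $V_\alpha - r$ is countable, which means the preservation requirements (I) and (II) for all such $u$ can be handled by a single elementary submodel argument. The plan combines three existing tools: Corollary~\ref{cor: I G-qusi} (via Lemma~\ref{lem:G_quasi_flame} inside a submodel) handles vertex-flame preservation at all $u \in V_\alpha - r$, yielding (II); Lemma~\ref{l: largest EMsep} reduces (I) to the single requirement that $T := T_{L^{\omega\alpha+n}, v}$ remain linked to $v$ in the restricted digraph; and the quasi-vertex-flame property of $L_{\omega\alpha+n}$ (which is large by property~(\ref{item: main 5}) and hence quasi-vertex-flame by Corollary~\ref{cor: large for a quasi is quasi}), combined with Corollary~\ref{cor: EMpathsys covers I}, is what lets us splice the finite set $F$ into the sought path-system.

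\emph{Construction.} Set $H := L^{\omega\alpha+n}$ and pick a countable elementary submodel $M$ containing the construction data so far — in particular $D, r, H, v, T, F$, and the sequences $\langle \mathcal{Q}_\xi\rangle$, $\langle \mathcal{Q}^\xi\rangle$ — arranged so that $V_\alpha \subseteq M$, which is possible since $V_\alpha$ is countable. Inside $M$, apply Lemma~\ref{lem:G_quasi_flame} to $H \cap M$, $v$, and $G \cap M$ where $G := (V, \bigcup_{u \in V_\alpha - r} \mathsf{in}_H(u))$; by the inductive hypothesis~(\ref{item: main 7})(b), the resulting $I_0 \in \mathcal{G}_{H \cap M}(v)$ makes $(H \cap M) \upharpoonright_v I_0$ vertex-flame at every $u \in V_\alpha - r$. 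Still inside $M$, use Corollary~\ref{cor: EMpathsys covers I} to enlarge $I_0$ to some $I_1 \in \mathcal{G}_{H \cap M}(v)$ witnessed by a path-system $\mathcal{R} \in \mathfrak{P}_{H \cap M}(v, T \cap M)$, so that $T$ stays linked to $v$ through the paths of $\mathcal{R}$. Finally set $I := I_1 \cup F$, certifying $I \in \mathcal{G}_{L_{\omega\alpha+n}}(v)$ by one more application of Corollary~\ref{cor: EMpathsys covers I} at the level of $L_{\omega\alpha+n}$: the finite set $F$ lies in $\mathcal{G}_{L_{\omega\alpha+n}}(v)$ by the quasi-vertex-flame property, and its path-system can be merged with that of $I_1$ without breaking internal disjointness.

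\emph{Lifting and verification.} Lift from the submodel to the full digraph by the splicing idea from the proof of Corollary~\ref{cor: I G-qusi}: for each $u \in V_\alpha - r$, the flame-witness in $H \upharpoonright_v I$ is the union of the one produced inside $M$ together with $\mathcal{Q} \setminus M$, where $\mathcal{Q} \in M$ witnesses the flame property of $H$ at $u$. Any path of $\mathcal{Q}$ meeting $V \cap M$ at a vertex outside $\{r, u\}$ is definable from that vertex and $\mathcal{Q}$ and hence lies entirely in $M$, so internal disjointness is preserved. Then (II) follows by this lifting, (III) holds by construction, and (I) follows from Lemma~\ref{l: largest EMsep} since $T$ remains linked to $v$ in $H \upharpoonright_v I$ through the $\mathcal{R}$-part of $I$. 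The main obstacle is the coordination of the three ingredients inside the submodel: the enlargement from $I_0$ to $I_1$ must preserve the flame-preservation property while additionally routing the witnessing path-system through $T$, and the subsequent absorption of $F$ must not disturb either of these features.
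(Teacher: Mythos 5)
Your overall architecture matches the paper's (Corollary~\ref{cor: I G-qusi} for the flame properties at all $u\in V_\alpha-r$, Lemma~\ref{l: largest EMsep} via $T=T_{L^{\omega\alpha+n},v}$, and a Pym-type splice for $F$), but two essential steps are missing.

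First, the claim that Lemma~\ref{l: largest EMsep} ``reduces (I) to the single requirement that $T$ remain linked to $v$'' is not correct. Membership $S_u\setminus V_\alpha\in\mathfrak{S}_{L^{\omega\alpha+n}\upharpoonright_v I}(u)$ requires a nonempty $\mathfrak{P}_{L^{\omega\alpha+n}\upharpoonright_v I}(u,S_u\setminus V_\alpha)$, which decomposes into two halves: $S_u\setminus V_\alpha$ must remain linked \emph{to} $u$ (this is what Lemma~\ref{l: largest EMsep} gives you) \emph{and} it must remain linked \emph{from} $r$. Your construction never addresses the second half. The paper folds it into the flame-preservation machinery by building an auxiliary digraph $A$ with a dummy vertex $w_u$ for each $u\in V_\alpha-r$ whose in-neighbourhood is $S_u\setminus V_\alpha$, and then applying Corollary~\ref{cor: I G-qusi} to $A$ with $W=(V_\alpha-r)\cup\{w_u: u\in V_\alpha-r\}$; preserving the flame property at $w_u$ is exactly preserving the linkage of $S_u\setminus V_\alpha$ from $r$. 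Without something of this kind, (I) is simply not established.

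Second, the absorption of $F$ is the hard part of the claim and you leave it unresolved (you even name it as ``the main obstacle'' in your last sentence). When you merge the finite witness $\mathcal{Q}_F$ for $F\in\mathcal{G}_{L_{\omega\alpha+n}}(v)$ with a witness for $I^*$, internal disjointness forces you to discard finitely many paths of the latter, i.e.\ the resulting set of last edges omits a finite subset of $I^*$. Since (I) and (II) are only monotone upwards, dropping even finitely many edges of $I^*$ can destroy the flame property at some $u\in V_\alpha-r$ or the linkage of some $S_u\setminus V_\alpha$. The paper resolves this with an exchange argument: it chooses $\mathcal{Q}$ subject to $F\subseteq E^+(\mathcal{Q})\subseteq F\cup I^*$ with $I^*\setminus E^+(\mathcal{Q})$ finite and \emph{minimal}, and derives a contradiction from a hypothetical failure at some $w\in W$ using Corollary~\ref{cor: G-quasi add one} to reroute $\mathcal{Q}$ through a separator $S\ni v$ and strictly decrease $I^*\setminus E^+(\mathcal{Q})$. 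A related omission: the paper must first reroute $\mathcal{Q}_F$ (using Lemmas~\ref{l: lim linked U from r} and~\ref{l: linked from r U}) so that its paths never re-enter $V_\alpha$, which is what later guarantees that the spliced $T\rightarrow v$ paths lie in $L^{\omega\alpha+n}$ rather than merely in $L_{\omega\alpha+n}$; without this, the hypothesis of Lemma~\ref{l: largest EMsep} (linkage of $T$ to $v$ in $L^{\omega\alpha+n}\upharpoonright_v I$) is not met.
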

Suppose that we already know \cref{cl: desired I}.
By \cref{cor: EMpathsys covers I} we can pick a path-system $\mathcal{Q}_{\omega \alpha+n}\in \mathfrak{P}_{L_{\omega \alpha+n}}(v, S_v) $ that covers $ I-rv $.
We define $ L_{\omega \alpha+n+1}:=L_{\omega \alpha+n} \upharpoonright_{v}E^{+}(\mathcal{Q}_{\omega \alpha+n}) $. 
Conditions \labelcref{item: main 1,item: main 2,item: main 3,item: main 4,item: main 5,item: main 6} are preserved for the same reason as in the case $ \alpha=0 $.
Note that properties \labelcref{item: succ a} and \labelcref{item: succ b} of the desired set $ I $ are increasing in the sense that if they hold for some $ I $, then they remain true for every $I' \supseteq I $.
Indeed, the path-systems witnessing these properties for $ I $ also witness them with respect to $ I' $.
Conditions \labelcref{item: succ a} and \labelcref{item: succ b} guarantee \labelcref{item: main 7a} and \labelcref{item: main 7b} respectively for $\omega \alpha +n +1$. 
Preservation of \labelcref{item: main 9} is ensured by \labelcref{item: succ c}.
The definition of $ \mathcal{Q}_{\omega \alpha+n}, \mathcal{Q}^{\omega \alpha+n} $ and $ L_{\omega \alpha+n+1} $ rely only on the parameters $ L_{\omega\alpha+n},v_{\omega \alpha+n}, c$ and $ M_\alpha $ all of which are in $ M_{\alpha+1} $, thus we keep \labelcref{item: main 8} as well.

\begin{proof}[Proof of \cref{cl: desired I}]
Since $ F $ is finite ($ \left|F\right|\leq 2n $ follows directly from its definition) and $ L_{\omega \alpha+n} $ is a quasi-vertex-flame, we have $F\in \mathcal{G}_{L_{\omega \alpha+n}}(v) $.
We claim that it is possible to choose a witness $ \boldsymbol{\mathcal{Q}_{F}}=\{ Q_e: e\in F \} $ for $F\in \mathcal{G}_{L_{\omega \alpha+n}}(v) $ (where $ e $ is the last edge of $ Q_e $) in such a way that whenever a path in $ \mathcal{Q}_{F} $ leaves $ V_\alpha $ it never returns, in other words no path in $ \mathcal{Q}_F $ has an edge in $ \mathsf{in}_D(V_\alpha) $.
Indeed, suppose that $ \mathcal{Q}_F' $ is an arbitrary witnesses for $F\in \mathcal{G}_{L_{\omega \alpha+n}}(v) $ and let $ U_F $ be the set of the last common vertices of the paths in $ \mathcal{Q}_F' $ with $ V_\alpha $.
Then $ U_F-r $ is a finite subset of $ V_\alpha-r $ which is linked from $ r $ in $ D $. But then, since $ M_\alpha $ is an elementary submodel, $ U_F-r $ is linked from $ r $ in $ D_\alpha $ as well.
It follows from \cref{l: linked from r U} via the $ D_\alpha $-largeness of $ L_{\omega \alpha}\cap D_\alpha $ (see \cref{l: lim linked U from r}) and $ L_{\omega \alpha}\cap D_\alpha= L_{\omega \alpha+n}\cap D_\alpha $ that $ U_F-r $ remains linked from $ r $ in $ L_{\omega \alpha+n}\cap D_\alpha $.
This means that we can replace the initial segments of the paths in $ \mathcal{Q}_F' $ up to $ U_F $ in $ L_{\omega \alpha+n}\cap D_\alpha $ in such a way that these new initial segments have vertices only in $ V_\alpha $.
This modification of $ \mathcal{Q}_F' $ provides the desired $ \mathcal{Q}_F $.
 
We build an auxiliary digraph $ \boldsymbol{A} $ by adding a ``dummy'' vertex $ w_{u} $ for every $ u\in V_{\alpha}-r $ to $ L^{\omega \alpha+n} $ whose in-neighbours are $ S_u\setminus V_\alpha $ and has no out-neighbours.
Let $ \boldsymbol{W}:=(V_{\alpha}-r)\cup \{ w_u: u\in V_{\alpha}-r \} $.
Then $ A $ has the vertex-flame property at every $ w\in W $ by properties \labelcref{item: main 7a} and \labelcref{item: main 7b}, moreover, $ W $ is countable. We are going to choose $ I $ in such a way that $ A \upharpoonright_v I $ also has the vertex flame property for every $ w\in W $.
For the original digraph $ L^{\omega \alpha+n} $ this means that $ L^{\omega \alpha+n} \upharpoonright_v I $ has the vertex-flame property for every $ u\in V_{\alpha}-r $ (demanded by \labelcref{item: succ b}), furthermore, the preservation of the vertex-flame property for dummy vertex $ w_u $ ensures that $ S_u\setminus V_\alpha $ remains linked from $ r $ in $ L^{\omega \alpha+n} \upharpoonright_v I $ which can be thought of as ``half'' of condition \labelcref{item: succ a}.

By applying \cref{cor: I G-qusi} with $ A, v=v_{\omega \alpha+n} $ and $ W $, we obtain an $I^* \in \mathcal{G}_A(v)= \mathcal{G}_{L^{\omega \alpha+n}}(v)$ such that ${A \upharpoonright_v I^*}$ has the 
vertex-flame property for every $ w\in W $.
Let $ \mathcal{Q} $ be a system of internally disjoint $ r\rightarrow v $ paths in $ L_{\omega \alpha+n} $ such that 
\begin{enumerate}[label=(\roman*)]
    \item\label{item: i} $F\subseteq E^{+}(\mathcal{Q})\subseteq F\cup I^{*} $;
    \item\label{item: ii} $ I^{*}\setminus E^{+}(\mathcal{Q}) $  is finite;
    \item\label{item: iii} Whenever some $Q\in \mathcal{Q} $ is not a path in $ L^{\omega \alpha+n} $, then $ Q\in \mathcal{Q}_F $;
    \item\label{item: iv} $ I^{*}\setminus E^{+}(\mathcal{Q}) $ is minimal among path-systems satisfying the properties \labelcref{item: i,item: ii,item: iii}.
\end{enumerate}

We first show that $ \mathcal{Q} $ is well-defined:
if we take a path-system $ \mathcal{Q}_{I^{*}} $ witnessing $ I^{*}\in \mathcal{G}_{A}(v) $, then, since $ \mathcal{Q}_F $ is finite, there is a co-finite subset $ \mathcal{Q}_{I^{*}}' $ of $ \mathcal{Q}_{I^{*}} $ for which the path-system $\mathcal{Q}'_{I^{*}}\cup \mathcal{Q}_F $ is internally disjoint and hence satisfies \labelcref{item: i,item: ii,item: iii}.

We claim that $E^{+}(\mathcal{Q}) $ still has the property that $ I^{*} $ had, namely that ${A \upharpoonright_v E^{+}(\mathcal{Q}) }$ has the vertex-flame property for every $ w\in W $.
Suppose for a contradiction that $ A \upharpoonright_v E^{+}(\mathcal{Q}) $ does not have the vertex-flame property at some $ w\in W $.
Note that we necessarily must have $w \neq v$, because $\mcalQ$ witnesses $E^{+}(\mcalQ) \in \mcalG_{\upharpoonright_v}E^{+}(\mathcal{Q}) (v)$.
Let $ \mathcal{P}_w $ be a witness for $ \mathsf{in}_A(w)\in \mathcal{G}_{A \upharpoonright_v I^{*}}(w) $.
Then there is precisely one path $P \in \mathcal{P}_w $ that uses precisely one edge $ uv\in I^{*}\setminus E^{+}(\mathcal{Q}) $.
Thus $\mcalP_w$ witnesses $\mathsf{in}_{A \upharpoonright_{v} E^{+}(\mcalQ) + uv} (w) \in \mcalG_{A \upharpoonright_{v} E^{+}(\mcalQ) + uv} (w)$.
Note also that $ u\neq r $, moreover, $ rv\notin E(D) $ since otherwise $ rv\in E(A) $ and hence the initial segment $ P v $ of $P$ can be replaced by the single edge $ rv $ and this shows that $ \mathsf{in}_{A}(w)\in \mathcal{G}_{A \upharpoonright_v E^{+}(\mathcal{Q})}(w) $, which contradicts the choice of $ w $.
Thus we may apply \cref{cor: G-quasi add one} with $ A \upharpoonright_v E^{+}(\mathcal{Q})+uv $, $ w $ and $ uv $ and we obtain a vertex set $ S\ni v $ which is linked from $ r $ in $ A \upharpoonright_v E^{+}(\mathcal{Q})+uv $ by a path-system~$\mathcal{P}_S$, such that~$S$ separates $ N_{A \upharpoonright_v E^{+}(\mathcal{Q})+uv}^{-}(v)-u $ from~$r$.
In particular,~$uv$ is the last edge of some~${P_{uv} \in \mathcal{P}_S}$.
We can assume $ S\cap V_\alpha=\emptyset $ by taking $ S:=S\setminus V_\alpha $ instead, because the vertices in $ V_{\alpha}-r $ do not have outgoing edges in $ A $ and hence it is still a separator.
We modify $ \mathcal{Q} $ in the following way.
Whenever $ Q\in \mathcal{Q} $ does not meet $ S-v $, then we let $ Q':=Q $.
Note that since $S$ separates $N_{A \upharpoonright_{v} E^{+}(\mcalQ)+uv}^{-}(v)-u$ from $r$ and no path of $\mcalQ$ uses one of the edges in $\mathsf{in}_{A}(v)\setminus E^{+}(\mcalQ)$ by definition, such a path is not a path in $L^{\omega \alpha +n}$, thus by \labelcref{item: iii} it is a path of $\mcalQ_{F}$.
Any other path $ Q\in \mathcal{Q} \setminus \mcalQ_{F} $ meets $ S-v $.
In this case we take the last common vertex $ v_Q $ of $ Q $ with $ S-v $ and replace the initial segment $ Q v_Q $ by the unique path in $ \mathcal{P}_S $ that terminates at $ v_Q $ to obtain $ Q' $.
Since the paths in $\mcalP_{S}$ are paths in $A \upharpoonright_{v} E^{+}(\mcalQ)$, the paths are disjoint from $Q \in \mcalQ_{F}$ by the construction of $A$.
Likewise, no path of $\mcalP_{S}$ can share a vertex with one of the segments $v_{Q}Q$ other than $v_{Q}$, since their union then would contain an $r$--$v$-path in $A \upharpoonright_{v} E^{+}(\mcalQ)+uv$ avoiding $S-v$.
Thus the constructed paths are internally disjoint and $\mathcal{Q}' :=\braces{ Q' \colon Q\in \mathcal{Q} }\cup \{ P_{uv} \} $ satisfies \labelcref{item: i,item: ii,item: iii} and witnesses via $ uv $ that $ \mathcal{Q} $ does not satisfy \labelcref{item: iv}, a contradiction.

Choosing $ I $ to be $ E^{+}(\mathcal{Q}) $ is ``almost'' suitable. Indeed, properties \labelcref{item: succ b} and \labelcref{item: succ c} would be satisfied as well as ``half'' of \labelcref{item: succ a}.We shall define $ I $ as a superset of $ E^{+}(\mathcal{Q}) $ guaranteeing the ``other half'' of \labelcref{item: succ a}, namely that $ S_u\setminus V_\alpha$ remains linked to $ u $ in $L^{\omega \alpha+n}\upharpoonright_v I $ for every $ u\in V_{\alpha}-r $.
Note that for $ T:=T_{L^{\omega \alpha+n},v} $ (see the definition at the end of \cref{subsection EM sep path}) we have $ T\cap V_\alpha=\varnothing $ because in $ L^{\omega \alpha +n} $ the vertices in $ V_\alpha-r $ have no outgoing edges and $ T $ is a minimal separation.
We are going to choose $ I $ in such a way that $ T$ remains linked to $ v $ in $L^{\omega \alpha +n}\upharpoonright_v I$.
By \cref{l: largest EMsep} this ensures that $ S_u\setminus V_\alpha$ remains linked to $ u $ in $ L^{\omega \alpha+n}\upharpoonright_v I $ for every $ u\in V_{\alpha}-r $. Let $ \mathcal{P}\in \mathfrak{P}_{L^{\omega \alpha+n}}(v, T) $ and let $ \mathcal{P}' $ consists of the terminal segments of the paths in $ \mathcal{P} $ from $ T $.
For $ Q\in \mathcal{Q} $, let $ Q' $ be the terminal segment of $ Q $ from the last common vertex with $T\cup V_\alpha $.
\Cref{cor: Pym+} applied in digraph $ L_{\omega \alpha+n} $ with vertex set $ T\cup V_\alpha $ and 
vertex $ v $ together with path-systems $ \mathcal{P}' $ and $ \mathcal{Q}' $ provides a system $ \mathcal{R}' $ (see \cref{fig R}) of $ T\cup V_\alpha\rightarrow v $ paths such that $ V(R_0)\cap V(R_1)\subseteq\{ r,v \} $ for every distinct $ R_0, R_1\in \mathcal{R}' $,
\[
    \boldsymbol{I}:=E^{+}(\mathcal{R}') \supseteq E^{+}(\mathcal{Q}') =E^{+}(\mathcal{Q})
\] 
and for every $ t\in T $ there is a unique $ R_t\in \mathcal{R}' $ with first vertex $ t $.
We claim that paths $ R_t $ lie completely in the subdigraph $ L^{\omega \alpha+n} $ of $ L_{\omega \alpha+n}$.
This is true by definition for $ R_t\in \mathcal{P}'$.
If it is not the case, then $ R_t $ consists of the initial segment of some $ P\in \mathcal{P}' $ up to some $ v_R $ and the terminal segment of some $ Q\in \mathcal{Q}' $ from $ v_R $.
If $ Q $ itself lies in $ L^{\omega \alpha+n} $ then we are done again.
If it is not the case, then $ Q $ is a terminal segment of a path in $ \mathcal{Q}_F $ see \labelcref{item: iii}.
Clearly $ v_R\notin V_\alpha $ because no path in $ \mathcal{P}' $ meets $ V_\alpha $.
But then the terminal segment of $ Q $ from $ v_R $ lies entirely in $ L^{\omega \alpha+n} $ because the paths in $ \mathcal{Q}_F $ never returns to $ V_\alpha $ once they left it.
Therefore $ R_t $ lies in $ L^{\omega \alpha+n} $ in all possible cases.
Thus the paths $ \{ R_t: t\in T \} $ link $ T $ to $ v $ in $ L^{\omega \alpha+n} $.

We extend the paths in $ \mathcal{R}' $ backwards to obtain a path-system $ \mathcal{R} $ witnessing $ I\in \mathcal{G}_{L_{\omega \alpha+n}}(v) $.
For $t\in T $, we take the initial segment of the unique $ P_t\in \mathcal{P} $ through $ t $ until $ t $.
These extended paths meet $ V_\alpha $ only at $ r $ because in $ L^{\omega \alpha +n} $ the vertices in $ V_\alpha-r $ have no outgoing edges.
There are only finitely many paths $ R $ in $ \mathcal{R}' $ whose first vertex is in $ V_\alpha $, each of which is a terminal segment of a path $ Q_R\in \mathcal{Q} $.
By  property \labelcref{item: iii} these $ Q_R $ are in $ \mathcal{Q}_F $.
As a backward extension of such an $ R $ we choose simply $ Q_R $ itself.
The new initial segments in this case have vertices only in $ V_\alpha $ and therefore meet the initial segments added to paths $ R_t $ only at $ r $.
Thus the resulting $ \mathcal{R} $ is really internally disjoint which completes the proof of \cref{cl: desired I}.
\end{proof}

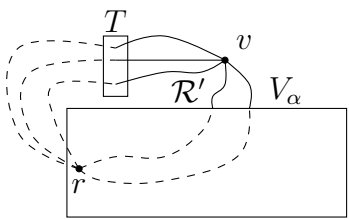
\begin{figure}[H]
    \centering
    \begin{tikzpicture}[scale=0.8]
        \draw  (-2.4,-1) rectangle (2.2,-2.8);
        \draw  (-1.8,0.2) rectangle (-1.4,-0.8);
        \node (v2) at (0.2,-0.2) {};
        \node at (-1.6,0.45) {$T$};
        \node (v1) at (-2.2,-2) {};
        \node at (1.2,-0.7) {$V_\alpha$};
        \node[red] at (-0.4,-0.7) {$\mathcal{R}'$};
        
        \draw[red]  plot[smooth, tension=.7] coordinates {(-1.6,0) (-0.8,0.2) (v2)};
        \draw[red]  plot[smooth, tension=.7] coordinates {(-1.6,-0.2) (-0.8,-0.2) (0.2,-0.2)};
        \draw[red]  plot[smooth, tension=.7] coordinates {(-1.6,-0.6) (-0.8,-0.4) (-0.2,-0.4) (0.2,-0.2)};
        
        \draw[red,dashed]  plot[smooth, tension=.7] coordinates {(-2.2,-2) (-3.2,-1.4) (-3.2,0) (-1.6,0)};
        \draw[red,dashed]  plot[smooth, tension=.7] coordinates {(-2.2,-2) (-3,-1.4) (-3,-0.4) (-1.6,-0.2)};
        \draw[red,dashed]   plot[smooth, tension=.7] coordinates {(-2.2,-2) (-2.6,-1.2) (-2.6,-0.6) (-1.6,-0.6)};
        
        \draw[red,dashed]   plot[smooth, tension=.7] coordinates {(-2.2,-2) (-1.8,-2.2) (-0.8,-2.2) (0.4,-1.8) (0.6,-1)};
        \draw[red,dashed]  plot[smooth, tension=.7] coordinates {(-2.2,-2) (-1.6,-1.8) (-0.8,-1.8) (-0.2,-1.4) (0,-1)};
        
        \draw[red]  plot[smooth, tension=.7] coordinates {(0,-1) (0,-0.8) (0.2,-0.6) (0.2,-0.2)};
        \draw[red]  plot[smooth, tension=.7] coordinates {(0.6,-1) (0.6,-0.6) (0.2,-0.2)};
        \node[below] at (v1) {$r$};
        \node[above right] at (v2) {$v$};
        \foreach \x in {v1,v2} \draw[fill] (\x) circle [radius=.05];
    \end{tikzpicture}
    \caption{The path-system $ \mathcal{R} $.}\label{fig R}
\end{figure} 

\begin{bibdiv}
\begin{biblist}

\bib{aharoni2009menger}{article}{
      author={Aharoni, Ron},
      author={Berger, Eli},
       title={Menger’s theorem for infinite graphs},
        date={2009},
     journal={Inventiones mathematicae},
      volume={176},
      number={1},
       pages={1\ndash 62},
}

\bib{flameVives}{thesis}{
      author={{Calvillo~Vives}, Gilberto},
       title={Optimum branching systems},
        type={Ph.D. Thesis},
        date={1978},
}

\bib{MR3822066}{book}{
      author={Diestel, Reinhard},
       title={Graph theory},
     edition={Fifth},
      series={Graduate Texts in Mathematics},
   publisher={Springer, Berlin},
        date={2018},
      volume={173},
        ISBN={978-3-662-57560-4; 978-3-662-53621-6},
}

\bib{erde2020enlarging}{article}{
      author={Erde, Joshua},
      author={Gollin, J.~Pascal},
      author={Joó, Attila},
       title={Enlarging vertex-flames in countable digraphs},
        date={2021},
        ISSN={0095-8956},
     journal={Journal of Combinatorial Theory, Series B},
      volume={151},
       pages={263\ndash 281},
  url={https://www.sciencedirect.com/science/article/pii/S0095895621000551},
}

\bib{jooGreedoidFlame2021}{article}{
      author={Jo{\'o}, Attila},
       title={Greedoids from flames},
        date={2021},
     journal={Journal of graph theory},
      volume={98},
      number={1},
       pages={49\ndash  56},
}

\bib{joo2019complete}{article}{
      author={Jo{\'o}, Attila},
       title={The complete lattice of {E}rd{\H{o}}s-{M}enger separations},
        date={2019},
      journal={ArXiv: 1904.06244},
}

\bib{joo2019vertex}{article}{
      author={Jo{\'o}, Attila},
       title={Vertex-flames in countable rooted digraphs preserving an
  {E}rd{\H{o}}s-{M}enger separation for each vertex},
        date={2019},
     journal={Combinatorica},
      volume={39},
      number={6},
       pages={1317\ndash 1333},
}

\bib{lovasz1973connectivity}{article}{
      author={Lov{\'a}sz, L{\'a}szl{\'o}},
       title={Connectivity in digraphs},
        date={1973},
     journal={Journal of Combinatorial Theory, Series B},
      volume={15},
      number={2},
       pages={174\ndash 177},
}

\bib{nash1960decomposition}{article}{
      author={Nash-Williams, C St~JA},
       title={Decomposition of graphs into closed and endless chains},
        date={1960},
     journal={Proceedings of the London Mathematical Society},
      volume={3},
      number={1},
       pages={221\ndash 238},
}

\bib{pym1969linking}{article}{
      author={Pym, JS},
       title={The linking of sets in graphs},
        date={1969},
     journal={Journal of the London Mathematical Society},
      volume={1},
      number={1},
       pages={542\ndash 550},
}

\bib{soukup2011elementary}{article}{
      author={Soukup, Lajos},
       title={Elementary submodels in infinite combinatorics},
        date={2011},
     journal={Discrete Mathematics},
      volume={311},
      number={15},
       pages={1585\ndash 1598},
}

\end{biblist}
\end{bibdiv}

\end{document}